\title[Conjugacy in Wreath Products and Free Solvable Groups]{The Geometry of the Conjugacy Problem in Wreath Products and Free Solvable Groups}
\author{Andrew W. Sale} 
\thanks{Email: andrew.sale@some.oxon.org. The author was supported by the EPSRC}
\newcommand{\Z}{\mathbb{Z}}
\newcommand{\N}{\mathbb{N}}
\newcommand{\CAT}[1]{\mathrm{CAT}(#1)}
\newcommand{\Cay}{\mathrm{Cay}}
\newcommand{\SOL}{\mathrm{SOL}}
\newcommand{\modulus}[1]{\left| #1 \right|}
\newcommand{\Supp}[1]{\mathrm{Supp}(#1)}
\newcommand{\Magnus}[1]{\left(\begin{array}{cc} \alpha(#1) & \frac{\partial^\star #1}{\partial x_1}t_1 + \ldots + \frac{\partial^\star #1}{\partial x_r}t_r \\ 0 & 1 \end{array}\right)}
\newcommand{\SolMX}[2]{{\left(\begin{array}{cc} #1 & #2 \\ 0 & 1 \end{array}\right)}}
\newcommand{\ddxi}[1]{\frac{\partial #1}{\partial x_i}}
\newcommand{\dstardxi}[1]{\frac{\partial^\star #1}{\partial x_i}}
\newcommand{\CLF}{\mathrm{CLF}}
\newcommand{\BS}[1]{\mathrm{BS}(1,#1)}
\newcommand{\maggeo}{\varphi_{\mathrm{geo}}}
\newcommand{\acknowledgements}{\vspace{3mm} \noindent\textsc{Acknowledgements: }}
\newtheorem{thm}{Theorem}[section]
\newtheorem{cor}[thm]{Corollary}
\newtheorem{lemma}[thm]{Lemma}
\newtheorem{prop}[thm]{Proposition}
\newtheorem{thmspecial}{Theorem}
\newenvironment{question}[1][Question]{\begin{trivlist}
\item[\hskip \labelsep {\bf #1}:]}{\end{trivlist}}
\keywords{geometric group theory, conjugacy problem, wreath products, solvable groups, free solvable groups.}
\subjclass[2010]{20F65, 20F16, 20F10.}
\begin{document}
\begin{abstract}
In this paper we describe an effective version of the conjugacy problem and study it for wreath products and free solvable groups. The problem involves estimating the length of short conjugators between two elements of the group, a notion which leads to the definition of the conjugacy length function. We show that for free solvable groups the conjugacy length function is at most cubic. For wreath products the behaviour depends on the conjugacy length function of the two groups involved, as well as subgroup distortion within the quotient group.
\end{abstract}

\maketitle

In geometric group theory there has often been a tendency to produce more effective results. For example, calculating the Dehn function of a group is an effective version of the word problem and it gives us a better understanding of its complexity. We can, furthermore, use this extra information to determine more details of the group at hand. Estimating the length of short conjugators in a group could be described as an \emph{effective} version of the conjugacy problem, and finding a control on these lengths in wreath products and free solvable group is the main motivation of this paper.

The conjugacy problem is one of Max Dehn's three decision problems for groups formulated in 1912 \cite{Dehn12}. Dehn originally described these problems because of the significance he discovered they had in the geometry of $3$--manifolds and they have since become the most fundamental problems in combinatorial and geometric group theory. Let $\Gamma$ be a finitely presented group with finite generating set $X$. The conjugacy problem asks whether there is an algorithm which determines when two given words on $X \cup X^{-1}$ represent conjugate elements in $\Gamma$. This question may also be asked of recursively presented groups, and we can try to develop our understanding further by asking whether one can find, in some sense, a short conjugator between two given conjugate elements of a group.

Suppose word-lengths in $\Gamma$, with respect to the given generating set $X$, are denoted by $\modulus{\cdot}$. The \emph{conjugacy length function} is the minimal function $\CLF_\Gamma : \N \rightarrow \N$ which satisfies the following: if $u$ is conjugate to $v$ in $\Gamma$ and $\modulus{u}+\modulus{v}\leq n$ then there exists a conjugator $\gamma \in \Gamma$ such that $\modulus{\gamma} \leq \CLF_\Gamma(n)$. One can define it more concretely to be the function which sends an integer $n$ to
				$$\max \{ \min \{ \modulus{w} : wu=vw \} : \modulus{u} + \modulus{v} \leq n \ \textrm{and $u$ is conjugate to $v$ in $\Gamma$} \}\textrm{.}$$
We know various upper bounds for the conjugacy length function in certain classes of groups. For example, Gromov--hyperbolic groups have a linear upper bound; this is demonstrated by Bridson and Haefliger \cite[Ch.III.$\Gamma$ Lemma 2.9]{BH99}. They also show that $\CAT{0}$ groups have an exponential upper bound for conjugacy length \cite[Ch.III.$\Gamma$ Theorem 1.12]{BH99}. In \cite{Sale11} we showed that some metabelian groups, namely the lamplighter groups, solvable Baumslag--Solitar groups and lattices in $\SOL$, have a linear upper bound on conjugacy length. A consequence of the result for lattices in $\SOL$, together with results of Berhstock and Dru\c{t}u \cite{BD11} and Ji, Ogle and Ramsey \cite{JOR10}, was that fundamental groups of prime $3$--manifolds have a quadratic upper bound. Behrstock and Dru\c{t}u also show that infinite order elements in mapping class groups have a linear upper bound for conjugator length, expanding on a result of Masur and Minsky \cite{MM00} in the pseudo--Anosov case. Jing Tao \cite{Tao11} developed tools to study the conjugacy of finite order elements and showed in fact that all elements in a mapping class group enjoy a linear bound.

In this paper we look at wreath products and free solvable groups. The definition of a free solvable group is as follows: let $F'=[F,F]$ denote the derived subgroup of $F$, where $F$ is the free group of rank $r$. Denote by $F^{(d)}$ the $d$--th derived subgroup, that is $F^{(d)}=[F^{(d-1)},F^{(d-1)}]$. The \emph{free solvable group} of rank $r$ and derived length $d$ is the quotient $S_{r,d}=F/F^{(d)}$. The conjugacy problem in free solvable groups was shown to be solvable by Kargapolov and Remeslennikov \cite{KR66} (see also \cite{RS70}) extending the same result for free metabelian groups by Matthews \cite{Matt66}. Recently, Vassileva \cite{Vass11} has looked at the computational complexity of algorithms to solve the conjugacy problem and the conjugacy search problem in wreath products and free solvable groups. In particular Vassileva showed that the complexity of the conjugacy search problem in free solvable groups is at most polynomial of degree $8$. Using the Magnus embedding, and in particular the fact that it its image is undistorted in the ambient wreath product, we are able to improve our understanding of the length of short conjugators in free solvable groups:

\begin{thmspecial}\label{thmspecial:clf free solvable}
Let $r,d > 1$. Then the conjugacy length function of the free solvable group $S_{r,d}$ is bounded above by a cubic polynomial.
\end{thmspecial}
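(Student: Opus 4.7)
The plan is to exploit the Magnus embedding $\psi : S_{r,d} \hookrightarrow \Z^r \wr S_{r,d-1}$ and the undistortion of its image, both recalled in the introduction. Since $\psi$ is a bi-Lipschitz inclusion onto its image, it suffices, given conjugate $u, v \in S_{r,d}$ with $\modulus{u}+\modulus{v} \leq n$, to exhibit a conjugator $\psi(\gamma) \in \psi(S_{r,d})$ of $\psi(u)$ and $\psi(v)$ whose length as an element of the ambient wreath product is at most cubic in $n$; undistortion then pulls this back to a conjugator of comparable length in $S_{r,d}$.

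I would proceed by induction on the derived length $d$. The base case is $S_{r,1} \cong \Z^r$, whose conjugacy length function is linear. For the inductive step, the wreath-product conjugacy length theorem proved earlier in the paper, applied to $\Z^r \wr S_{r,d-1}$, should furnish a conjugator $h$ of $\psi(u)$ and $\psi(v)$ in the wreath product of polynomially controlled length. The relevant ingredients are the linear CLF of $\Z^r$, the at most linear diameter of the lamp supports of $\psi(u)$ and $\psi(v)$, and the inductive cubic bound on $\CLF_{S_{r,d-1}}$. Tracking these contributions, the dominant term ought to be cubic in $n$, inherited from the inductive bound on the base group.

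The principal obstacle is that the minimal-length conjugator $h$ produced by the wreath-product estimate need not lie in $\psi(S_{r,d})$. Since $u$ and $v$ are conjugate in $S_{r,d}$, the coset $h \cdot C$, with $C = C_{\Z^r \wr S_{r,d-1}}(\psi(v))$, does meet $\psi(S_{r,d})$, and the task is to find $z \in C$ with $hz \in \psi(S_{r,d})$ and $hz$ still of cubic length. This will rest on the standard description of centralizers in a wreath product, decomposed in terms of the orbits of the cyclic subgroup generated by the base component of $\psi(v)$ acting on the support of its lamp component, together with a second invocation of the undistortion of $\psi(S_{r,d})$ to guarantee that a short corrector exists.

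The most delicate step is to verify that the combined correction, together with the recursive application of the wreath-product bound at each stage of the induction, does not compound the exponent beyond cubic in $d$. This should follow because the $\CLF_{S_{r,d-1}}$ contribution enters additively rather than multiplicatively in the wreath-product estimate, so that each Magnus step adds only a controlled cubic term and the implicit constant may be absorbed into one depending on $r$ and $d$. Once this bookkeeping is in hand, the induction closes and the undistortion of $\psi$ translates the cubic bound from the ambient wreath product back to $S_{r,d}$, yielding the theorem.
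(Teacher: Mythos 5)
Your starting point matches the paper's: embed $S_{r,d}$ in $\Z^r\wr S_{r,d-1}$ via the Magnus embedding, use the wreath-product conjugator bound, and pull back through the bi-Lipschitz embedding. But there are two genuine gaps. First, your claim that the $\CLF_{S_{r,d-1}}$ contribution ``enters additively rather than multiplicatively'' in the wreath-product estimate is false: in Theorem \ref{thm:wreath conj length} the quantity $P$, which equals $n+\CLF_B(n)$ when $(f,b)$ is conjugate to $(1,b)$, appears as a \emph{factor} and also as the argument of the distortion function, so the bound is roughly $(n+1)\,P\,\delta(P)$. If you feed an inductive bound $\CLF_{S_{r,d-1}}(n)\preceq n^k$ into this, you get $\CLF_{S_{r,d}}(n)\preceq n^{2k+1}$ even with linear distortion, so the exponent compounds with $d$ and your induction cannot close at cubic. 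The paper's key insight, which you are missing, is Lemma \ref{lem:image not (1,b)}: a non-trivial element of the Magnus image whose $S_{r,d-1}$-component has infinite order is \emph{never} conjugate to an element of the form $(1,c)$, so the case $P=n+\CLF_B(n)$ simply does not arise, $P=2n$ always, and the conjugacy length function of $S_{r,d-1}$ never enters. Consequently no induction on $d$ is needed for the CLF at all; the only quantitative input about $S_{r,d-1}$ is the linear distortion of its cyclic subgroups (Proposition \ref{prop:cyclic subgroup distortion in free solvable groups}), and the cubic bound comes out in a single application of Theorem \ref{thm:F/N' conj length}.

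Second, your treatment of the obstacle that the wreath-product conjugator need not lie in $\psi(S_{r,d})$ is under-justified. Undistortion of the image tells you that lengths inside and outside the subgroup are comparable for elements that are already in the image; it does not tell you that the coset $hC\cap\psi(S_{r,d})$ contains a short element. The paper resolves this with two specific tools: Theorem \ref{thm:Magnus conjugators} (a Fox-calculus argument using Lemma \ref{lem:kernel of alpha^star}) produces a conjugator in the image with the \emph{same} $F/N$-component $\gamma$ as the given one, and Lemma \ref{lem:wreath conj bound depending on B part} shows that the length of \emph{any} conjugator $(h,z)$ is controlled by $|z|$ alone, independently of $h$. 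Together these make the correction step automatic once a conjugator with short $B$-part exists; without them your ``second invocation of undistortion'' does not yield a short corrector.
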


The Magnus embedding allows us to see $S_{r,d+1}$ as a subgroup of $\Z^r \wr S_{r,d}$. Hence, if we are to use the Magnus embedding, we need to understand conjugacy in wreath products. For such groups the conjugacy problem was studied by Matthews \cite{Matt66}, who showed that for two recursively presented groups $A,B$ with solvable conjugacy problem, their wreath product $A\wr B$ has solvable conjugacy problem if and only if $B$ has solvable power problem. In Section \ref{sec:conjugacy in wreath products} we show that for such $A$ and $B$ there is an upper bound for the conjugacy length function of $A \wr B$ which depends on the conjugacy length functions of $A$ and $B$ and on the subgroup distortion of infinite cyclic subgroups of $B$ (note that these distortion functions are related to the power problem). In the case when the $B$--component is of infinite order we do not need the conjugacy length function of $A$.

\begin{thmspecial}\label{thmspecial:conj length in wreath products}
Suppose $A$ and $B$ are finitely generated groups. Let $u=(f,b),v=(g,c)$ be elements in $\Gamma=A\wr B$. Then $u,v$ are conjugate if and only if there exists a conjugator $\gamma \in \Gamma$ such that 
\begin{equation*}\begin{array}{ll}
d_\Gamma (1,\gamma) \leq (n+1)P(2\delta_{\langle b \rangle}^B(P)+1) &  \textrm{if $b$ is of infinite order, or}\\
d_\Gamma (1,\gamma) \leq  P(N+1)(2n+\CLF_A(n)+1) &  \textrm{if $b$ is of finite order $N$,}
\end{array}\end{equation*}
where  $n = d_\Gamma(1,u)+d_\Gamma(1,v)$, $\delta_H^B$ is the subgroup distortion function of $H < B$ and $P=2n$ if $(f,b)$ is not conjugate to $(1,b)$ and $P=n+\CLF_B(n)$ otherwise.
\end{thmspecial}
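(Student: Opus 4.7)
The plan is to use the standard coordinates for $\Gamma = A \wr B$ and reduce conjugacy to a pair of coupled conditions. Writing $u = (f,b)$, $v = (g,c)$, $\gamma = (h,s)$, a direct computation gives
\[
\gamma u \gamma^{-1} = \bigl((1-sbs^{-1})h + s\cdot f,\; sbs^{-1}\bigr)
\]
in the abelian case (the general case is analogous using pointwise multiplication). Hence $u \sim v$ if and only if there exist $s \in B$ with $sbs^{-1}=c$ and $h$ with $(1-c)h = g - s\cdot f$. My strategy is: (i) produce a short $s$; (ii) solve the functional equation for $h$ orbit by orbit under $\langle c \rangle$; (iii) estimate $d_\Gamma(1,\gamma)$ via the standard description of length in a wreath product as a sum of $A$-norms of lamp values plus a travelling-salesman cost in $B$.

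For step (i), projecting $u \sim v$ to $B$ gives $b \sim c$, so $\CLF_B$ yields some $s$ with $|s|_B \leq \CLF_B(n)$; this is the origin of $P = n + \CLF_B(n)$ as a bound on the radius in $B$ of $\Supp(g - s\cdot f) \cup \{1,s\}$. When $u$ is not conjugate to $(1,b)$, however, some $\langle b \rangle$-orbit sum $\sigma$ of $f$ is nontrivial, and a conjugator must send $\sigma$ to a matching orbit sum of $g$, forcing $sx \in \langle c \rangle y$ for some $x \in \Supp(f)$ and $y \in \Supp(g)$. Adjusting $s$ within the coset $\langle c \rangle yx^{-1}$ while preserving $sbs^{-1}=c$ yields $|s|_B \leq |yx^{-1}|_B \leq 2n$, giving $P = 2n$ and explaining the dichotomy.

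For step (ii), restrict $(1-c)h = k$ (with $k = g - s\cdot f$) to a single $\langle c \rangle$-orbit. In the infinite-order case this is a first-order telescoping recursion, solvable on each orbit precisely when the $k$-orbit-sum vanishes, which is ensured by $u \sim v$; the resulting $h$ is supported on the interval of $\Supp(k)$ within the orbit, with values given by partial sums of $k$. In the finite-order case $c^N = 1$, on each full $N$-orbit $h$ is determined only up to an $A$-constant which itself must satisfy a conjugacy relation in $A$, and so can be chosen of $A$-length at most $\CLF_A(n)$. For step (iii), I use
\[
d_\Gamma(1,(h,s)) \leq \sum_{x \in \Supp(h)} |h(x)|_A + 2|s|_B + 2\sum_{x \in \Supp(h)} |x|_B,
\]
obtained by depositing each lamp value via a round trip from $1$. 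In the infinite-order case each $\langle c \rangle$-orbit meets $\Supp(h)$ in at most $2\delta_{\langle b \rangle}^B(P)+1$ points (they sit in a $B$-ball of radius $P$ and differ by powers of $c$), each round trip costs at most $2P$, and the partial-sum structure telescopes $\sum |h(x)|_A$ to at most $(2\delta+1)n$. Assembling these ingredients gives $(n+1)P(2\delta+1)$. The finite-order case is analogous, with $N+1$ replacing $2\delta+1$ and the $\CLF_A(n)$ term absorbed into the per-point value bound $2n + \CLF_A(n) + 1$.

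The main obstacle I anticipate lies in step (i): I must verify that when $u \not\sim (1,b)$, the simultaneous conditions $sbs^{-1}=c$ and $|s|_B \leq 2n$ really are compatible by exploiting the orbit-sum constraint to choose a representative of $\langle c \rangle yx^{-1}$ that also conjugates $b$ to $c$. A secondary bookkeeping difficulty in step (iii) is the telescoping across multiple $\langle c \rangle$-orbits, which must be arranged so that the bound on the values of $h$ does not pick up an additional factor proportional to the number of orbits.
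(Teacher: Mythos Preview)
Your proposal is correct and follows essentially the same three-step architecture as the paper: Lemma~1.4 produces a short $B$--component $z$ (with exactly your dichotomy, resolving your anticipated obstacle by pre-multiplying an arbitrary conjugator by a power of $(f,b)$, which lies in the centraliser and shifts $z$ along the required $\langle b\rangle$--coset), Lemma~1.3 and Proposition~1.2 give the orbit-by-orbit construction of $h$, and Lemmas~1.5--1.6 carry out your step~(iii). The only real difference is that in step~(iii) the paper builds a single zig-zag path through $\Supp(h)$ (running along each $\langle b\rangle$--coset via consecutive powers, then jumping to the next coset) rather than your per-lamp round trips; this tighter travelling-salesman estimate is what recovers the precise constants $(n+1)P(2\delta+1)$ and $P(N+1)(2n+\CLF_A(n)+1)$, whereas your round-trip bound would overshoot by a small constant factor.
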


Note that when we avoid the conjugacy classes which contain elements of the form $(1,b)$, and if the $B$--component is also of infinite order, then we can find a short conjugator whose length is bounded above by $2(n^2+n)(2\delta_{\langle b \rangle}^B(2n)+1)$. This appears to be independent of the conjugacy length functions of both $A$ and $B$, but such a statement seems counter-intuitive. It suggests that the conjugacy length functions are somehow wrapped up in the word length of the wreath product.


In order to apply Theorem \ref{thmspecial:conj length in wreath products} to free solvable groups we must understand the distortion of their cyclic subgroups. Given a finitely generated subgroup $H$ in a finitely generated group $G$, with corresponding word metrics $d_H$ and $d_G$ respectively, the \emph{subgroup distortion function} $\delta_H^G$ compares the size of an element in a Cayley graph of $H$ with its size in a Cayley graph of $G$. It is defined as
				$$\delta_H^G(n)=\max \{ d_{H}(e_G,h)\mid d_G(e_G,h)\leq n \}\textrm{.}$$
Subgroup distortion is studied up to an equivalence relation of functions. For functions $f,g:\N \rightarrow [0,\infty)$ we write $f \preceq g$ if there exists an integer $C>0$ such that $f(n) \leq Cg(Cn)$ for all $n \in \N$. The two functions are equivalent if both $f \preceq g$ and $g \preceq f$. In this case we write $f \asymp g$. Up to this equivalence we can talk about \emph{the} distortion function for a group. If the distortion function of a subgroup $H$ satisfies $\delta_H^G(n) \asymp n$ then we say $H$ is undistorted in $G$, otherwise $H$ is distorted.

We also investigate lower bounds for the conjugacy length function of wreath products. The distortion of cyclic subgroups of $B$ plays an important role here too, though it is not the only tool we use. In particular, when considering wreath products of the form $A\wr B$ when $B$ contains a copy of $\Z^2$ we make use of the fact that right-angled triangles in $\Z^2$ contain an area which is quadratic with respect to the perimeter length. This is used to give a quadratic lower bound on the conjugacy length function of these wreath products.

\begin{thmspecial}\label{thmspecial:wreath CLF lower bounds}
Let $A$ and $B$ be finitely generated groups. If $B$ is not virtually cyclic then for any $x$ of infinite order in the centre of $B$ we have:
		$$\CLF_{A \wr B}(n) \succeq \delta_{\langle x \rangle}^B(n).$$
If $B$ contains a copy of $\Z^2$ then:
		$$\CLF_{A\wr B}(n)\succeq n^2.$$
In particular for some $\alpha \in [2,3]$ we have:
		$$\CLF_{A\wr \Z^r}(n) \asymp n^\alpha.$$
\end{thmspecial}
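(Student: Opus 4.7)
The plan is to prove each of the three assertions by constructing explicit conjugate pairs $u, v \in A \wr B$ of controlled word length whose conjugators are forced to have a very long lamp component. The basic tool is the wreath-product conjugation identity $(h,t)(f,b)(h,t)^{-1} = (h + tf - (tbt^{-1})h,\, tbt^{-1})$, which in the centralised case $tbt^{-1} = b$ simplifies to $((1-b)h + tf,\, b)$, together with the cursor-walk description of word length in $A \wr B$: the length of $(h,t)$ equals the shortest walk in the Cayley graph of $B$ starting at $1_B$ and ending at $t$ which passes through the support of $h$, plus $\sum_{b \in B} |h(b)|_A$.

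For the distortion lower bound I would fix $x \in Z(B)$ of infinite order, choose $k$ with $|x^k|_B \leq m$ and $k$ comparable to $\delta_{\langle x \rangle}^B(m)$, pick a non-trivial $a \in A$, and set $h_k := \sum_{i=0}^{k-1} a\,\delta_{x^i}$, $u := (0,x)$, $v := ((1-x)h_k, x)$. A direct computation gives $(1-x)h_k = a\delta_{1_B} - a\delta_{x^k}$, so $|u|+|v| = O(m)$. Because $x$ is central, any conjugator must have the form $(h',t)$ with $(1-x)h' = (1-x)h_k$; since $x$ has infinite order, $1-x$ is injective on finitely supported $A$-valued functions, forcing $h' = h_k$. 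Any word representing $(h_k,t)$ must move the cursor through the $k$ distinct points $1_B, x, \ldots, x^{k-1}$ and deposit $k$ non-trivial lamp values, so $|(h_k,t)|_{A\wr B} \geq 2k-1$, giving $\CLF_{A \wr B}(n) \succeq \delta_{\langle x \rangle}^B(n)$.

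For the quadratic bound I would fix an injection $\Z^2 \hookrightarrow B$ with $a,b \in B$ the images of the standard generators, pick a non-trivial $c \in A$, and set $h := \sum_{0 \leq i,j \leq n-1} c\,\delta_{a^i b^j}$, $u := (0,a)$, $v := ((1-a)h, a)$. One computes $(1-a)h = c\sum_{j=0}^{n-1} (\delta_{b^j} - \delta_{a^n b^j})$, supported on the $2n$ points forming the left and right columns of an $n \times n$ lattice square inside $B$, so $|u|+|v| = O(n)$. Running the same uniqueness argument with $a$ in place of $x$ shows that the lamp function of any conjugator must equal $h$. Since $\Z^2 \hookrightarrow B$ is injective, the $n^2$ support points of $h$ are distinct in $B$, and any cursor tour visiting all of them has at least $n^2-1$ edges in the Cayley graph of $B$. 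Thus every conjugator has length at least $n^2 - 1$, giving $\CLF_{A \wr B}(n) \succeq n^2$.

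The final assertion combines the two halves. For $r \geq 2$, $\Z^r$ contains $\Z^2$, so the previous step gives $\CLF_{A \wr \Z^r}(n) \succeq n^2$. Conversely, applying Theorem~\ref{thmspecial:conj length in wreath products} to $A \wr \Z^r$: every cyclic subgroup of $\Z^r$ is undistorted and $\CLF_{\Z^r}$ is linear, so the parameter $P$ of that theorem is $O(n)$ and $\delta_{\langle b\rangle}^{\Z^r}(P) = O(n)$, yielding $\CLF_{A\wr \Z^r}(n) \preceq n^3$. The main obstacle in making these arguments precise will be justifying the cursor-walk lower bound in $A \wr B$: one needs to check that the lamp support points $\{x^i\}$ and $\{a^i b^j\}$ remain genuinely distinct in $B$ and that $1-b$ has trivial kernel on finitely supported $A$-valued functions, irrespective of any distortion of $\langle x \rangle$ or $\Z^2$ inside $B$. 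Both reduce to the infinite order of $x$, respectively the injectivity of $\Z^2 \hookrightarrow B$.
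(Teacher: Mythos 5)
Your argument is correct, but it takes a genuinely different route from the paper's, and in the setting of this theorem it is arguably cleaner. For the distortion bound the paper takes $u_n=(f_n,x)$, $v_n=(g_n,x)$ with two-point supports $\{e_B,y\}$ and $\{x^{-\delta(n)},x^{\delta(n)}y\}$, where $y$ is an auxiliary element of $Z_B(x)$ with $y^2\notin\langle x\rangle$ (this is where ``not virtually cyclic'' enters); it must then argue with cosets that the cursor $z$ of any conjugator lies in $\langle x\rangle$, and count the support of the function $h$ forced by Lemma \ref{lem:coset reps}, which is determined only up to a shift by $z=x^k$ and so needs a short case analysis over $k$. You instead take $u=(1,x)$ with trivial lamp part, so the lamp function of any conjugator is \emph{uniquely} determined: the map $h\mapsto h\cdot(h^{-1})^x$ is injective on finitely supported functions because $x$ has infinite order. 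This removes the auxiliary $y$, the coset argument and the case analysis, and since the $B$-component equation already forces $t\in Z_B(x)$, your argument does not actually use centrality and would cover arbitrary infinite-order $x$ (cf.\ Theorem \ref{thm:clf wreath lower bound}, which the paper reaches only via the substitution $x=b^3$). Likewise for the quadratic bound: the paper compares a horizontal with a diagonal segment and extracts a triangle of quadratic area inside $\Supp{h}$, whereas you force the conjugator's lamp function to be exactly the full $n\times n$ square whose boundary has linear length; both are the same ``area versus perimeter'' phenomenon, and both finish with Lemma \ref{lem:wreath metric}. Two small presentational points: your additive notation for the lamp group should be read multiplicatively for non-abelian $A$ (e.g.\ $(1-x)h_k$ is the function taking value $a$ at $e_B$ and $a^{-1}$ at $x^k$), and, exactly as in the paper, the final claim $\CLF_{A\wr\Z^r}(n)\asymp n^\alpha$ is really the two-sided estimate $n^2\preceq\CLF_{A\wr\Z^r}(n)\preceq n^3$, with the cited upper bound formally covering pairs whose $\Z^r$-component is non-trivial.
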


Osin \cite{Osin01} has described the distortion functions of subgroups of finitely generated nilpotent groups. In particular, for a $c$--step nilpotent group $N$ his result implies that the maximal distortion of a cyclic subgroup of $N$ will be $n^c$. A consequence of Theorem \ref{thmspecial:conj length in wreath products}, Theorem \ref{thmspecial:wreath CLF lower bounds} and Osin's work is that when restricting to elements in $A \wr N$ not conjugate to an element of the form $(1,b)$, the conjugacy length function will be $n^\alpha$, where $\alpha \in [c,c+2]$. However, since we do not yet know the conjugacy length function of a general $c$--step nilpotent group, apart from this lower bound we cannot estimate the conjugacy length function of $A \wr N$. In the particular case when $N$ is $2$--step nilpotent we know its conjugacy length function is quadratic by Ji, Ogle and Ramsey \cite{JOR10}. This implies that the conjugacy length function of $A\wr N$, when $N$ is torsion-free, is $n^\alpha$ for some $\alpha \in [2,7]$.

\vspace{3mm}

The paper is divided into two sections, the first dealing with wreath products and the second with free solvable groups. Within Section \ref{sec:conjugacy in wreath products}, we obtain the upper bound in Section \ref{sec:short conj in wreath - upper bound} with the lower bounds discussed in Section \ref{subsec:lower bounds}. 
We begin Section \ref{sec:free solvable} by recapping some details of the Magnus embedding. The subgroup distortion of cyclic subgroups of $S_{r,d}$ is dealt with in Section \ref{sec:subgroup distortion}, before we move onto conjugacy in free solvable groups in the final section.

\acknowledgements The author would like to thank Cornelia Dru\c{t}u for many valuable discussions on this paper. Alexander Olshanskii's comments on a draft copy were also very helpful, as were discussions with Romain Tessera. He would also like to thank Ralph St{\"o}hr for providing a reference for Lemma \ref{lem:kernel of alpha^star}.

\section{Wreath Products}\label{sec:conjugacy in wreath products}

\subsection{Geometry of wreath products}\label{sec:wreath word length}

Let $A,B$ be groups. Denote by $A^{(B)}$ the set of all functions from $B$ to $A$ with finite support, and equip it with pointwise multiplication to make it a group. The (restricted) wreath product $A \wr B$ is the semidirect product $A^{(B)} \rtimes B$. To be more precise, the elements of $A\wr B$ are pairs $(f,b)$ where $f \in A^{(B)}$ and $b \in B$. Multiplication in $A\wr B$ is given by
				$$(f,b)(g,c)=(fg^b,bc), \ \ \ f,g \in A^{(B)} , \  b,c, \in B$$
where $g^b(x)=g(b^{-1}x)$ for each $x \in B$. The identity element in $B$ will be denoted by $e_B$, while we use $1$ to denote the trivial function from $B$ to $A$.

We can paint a picture of $A \wr B$ in a similar vein to the well-known picture for lamplighter groups $\Z_q \wr \Z$. In the more general context where we consider $A \wr B$, the problem of determining the length of an element requires a solution to the travelling salesman problem on a Cayley graph $\Cay(B,X)$ of $B$, with respect to some finite generating set $X$.  Suppose we take an element $(f,b) \in A \wr B$. We can think of this as a set of instructions given to a salesman, who starts the day at the vertex in $\Cay(B,X)$ labelled by the identity. The instructions comprise
\begin{itemize}
\item a list of vertices to visit (the support $\Supp{f}$);
\item a particular element of $A$ to ``sell'' at each of these vertices (determined by the image of $f$ at each vertex); and
\item a final vertex $b$ where the salesman should end the day.
\end{itemize}
Intuitively, therefore, we would expect the word length of $(f,b)$ to be the ``quickest'' way to do this. In particular, the salesman needs to find the shortest route from the identity vertex to $b$ in which every vertex of $\Supp{f}$ is visited at least once. We will denote the length of such a path by $K(\Supp{f},b)$, following the notation of \cite{deCo06}.
 
The following Lemma formalises this idea. A proof of the Lemma, for a slightly more general context, can be found in the Appendix of \cite[Lemma A.1]{deCo06} and also in \cite[Theorem 3.4]{DO11}. We fix a finite generating set $X$ for $B$, let $S=X\cup X^{-1}$, and for each $b \in B$ denote the corresponding word-length as $\modulus{b}$. We consider the left-invariant word metric on $B$, given by $d_B(x,y):=\modulus{x^{-1}y}$. Similarly, fix a finite generating set $T$ for $A$ and let $\modulus{\cdot}$ denote the word-length. For $f \in A^{(B)}$, let
				$$\modulus{f} = \sum_{x \in B} \modulus{f(x)}\textrm{.}$$
Let $A_{e_B}$ be the subgroup of $A^{(B)}$ consisting of those elements whose support is contained in $\{ e_B \}$. Then $A_{e_B}$ is generated by $\{f_t \mid t \in T \}$ where $f_t(e_B)=t$ for each $t \in T$ and $\Gamma$ is generated by $\{ (1,s),(f_t,e_B) \mid s \in S,t\in T \}$. With respect to this generating set, we will let $\modulus{(f,b)}$ denote the corresponding word-length for $(f,b) \in \Gamma$.
 
\begin{lemma}[{\hspace{-0.2mm}}{\cite[Lemma A.1]{deCo06}}]\label{lem:wreath metric}
Let $(f,b) \in \Gamma = A \wr B$, where $A,B$ are finitely generated groups. Then
$$
\modulus{(f,b)} = K(\Supp{f},b) + \modulus{f}$$
where $K(\Supp{f},b)$ is the length of the shortest path in the Cayley graph $\Cay(B,S)$ of $B$ from $e_B$ to $b$, travelling via every point in $\Supp{f}$.
\end{lemma}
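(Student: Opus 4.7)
The plan is to prove the two inequalities separately. For the upper bound $\modulus{(f,b)} \leq K(\Supp{f},b) + \modulus{f}$, I would exhibit an explicit word of this length representing $(f,b)$. Fix a path $s_1 s_2 \cdots s_k$ in $\Cay(B,S)$ of length $k = K(\Supp{f},b)$ from $e_B$ to $b$ passing through every vertex of $\Supp{f}$. Walk along this path in the wreath product by reading each $s_i$ as the generator $(1, s_i)$; each time the walk arrives at a vertex $x \in \Supp{f}$, pause and insert a geodesic word for $f(x)$ over $T \cup T^{-1}$, with each letter $t$ realised by a generator $(f_t, e_B)$ or its inverse. The multiplication rule $(f,b)(g,c) = (fg^b, bc)$, together with $g^b(x) = g(b^{-1} x)$, guarantees that this word evaluates to $(f,b)$, and its length is $k + \sum_{x \in \Supp{f}} \modulus{f(x)} = K(\Supp{f},b) + \modulus{f}$.

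For the lower bound, take any word $w = w_1 \cdots w_n$ of minimal length representing $(f,b)$. Each $w_i$ is either a $B$-generator $(1, s)^{\pm 1}$ with $s \in S$, or an $A$-generator $(f_t, e_B)^{\pm 1}$ with $t \in T$. Set $(\phi_i, \beta_i) := w_1 \cdots w_i$, so $\beta_0 = e_B$ and $(\phi_n, \beta_n) = (f,b)$. The sequence $\beta_0, \beta_1, \ldots, \beta_n$ traces a path from $e_B$ to $b$ in $\Cay(B,S)$, advancing by one edge for each $B$-generator and remaining at the current vertex for each $A$-generator. An inductive application of the multiplication rule shows that each $A$-generator $w_i$ carrying letter $t_i^{\pm 1}$ contributes a function supported at $\beta_{i-1}$ with that value to $\phi_i$; hence for every $x \in B$, $f(x) = \phi_n(x)$ is the ordered product in $A$ of the letters contributed at those steps where $\beta_{i-1} = x$. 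Two conclusions follow: every $x \in \Supp{f}$ must appear among $\beta_0, \ldots, \beta_{n-1}$, so the path visits $\Supp{f}$ and the number of $B$-generators in $w$ is at least $K(\Supp{f}, b)$; and at each such $x$, the $A$-generators occurring when $\beta_{i-1} = x$ spell out a word for $f(x)$ of length at least $\modulus{f(x)}$, so summing yields at least $\modulus{f}$ $A$-generators overall. Adding these bounds gives $n \geq K(\Supp{f},b) + \modulus{f}$.

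The main piece of bookkeeping is the semidirect-product twist: one must verify that, after the successive conjugations $g \mapsto g^{b}$ in the multiplication rule, the $A$-component $\phi_n$ decomposes cleanly as claimed, so that the contribution of each $A$-generator $w_i$ ends up supported precisely at $\beta_{i-1}$. This is most cleanly seen via the identity $(1, \beta)(f_t, e_B)(1, \beta)^{-1} = (f_t^\beta, e_B)$, where $f_t^\beta$ is the function supported at $\beta$ with value $t$. Once this decomposition is in hand, both inequalities follow directly from the construction and the definition of $K(\Supp{f}, b)$.
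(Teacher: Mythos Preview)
The paper does not actually prove this lemma; it merely states it and refers the reader to \cite[Lemma A.1]{deCo06} and \cite[Theorem 3.4]{DO11} for a proof. Your argument is the standard one and is correct. One small wording issue: in the upper bound you say ``each time the walk arrives at a vertex $x \in \Supp{f}$, pause and insert a geodesic word for $f(x)$''; of course you mean to insert this word \emph{once} per support point (say, at the first visit), not at every visit, since the path may pass through $x$ several times. With that clarification the constructed word has length exactly $K(\Supp{f},b)+\modulus{f}$ and both halves of your argument go through as written.
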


\subsection{Conjugacy in wreath products}

Let $A$ and $B$ be finitely generated groups. By a result of Matthews \cite{Matt66}, when $A$ and $B$ are recursively presented with solvable conjugacy problem and when $B$ also has solvable power problem, the group $\Gamma = A \wr B$ has solvable conjugacy problem. In what follows we will not need these assumptions, we will only assume that $A$ and $B$ are finitely generated.

Fix $b \in B$ and let $\{t_i \mid i \in I \}$ be a set of right-coset representatives for $\langle b \rangle$ in $B$. We associate to this a family of maps $\pi_{t_i}^{(z)}:A^{(B)} \rightarrow A$ for each $z$ in $B$ as follows:
$$
\pi_{t_i}^{(z)}(f)=\left\{ \begin{array}{cc} \prod\limits_{j=0}^{N-1} f(z^{-1}b^jt_i) & \textrm{for} \ b \  \textrm{of finite order} \ N \\ \prod\limits_{j=-\infty}^{\infty} f(z^{-1}b^jt_i) & \textrm{for} \ b \ \textrm{of infinite order.}\end{array}\right.$$
The products above are taken so that the order of multiplication is such that $f(t_ib^jz^{-1})$ is to the left of $f(t_ib^{j-1}z^{-1})$ for each $j$. When $z=e_B$ we denote $\pi_{t_i}^{(z)}$ by $\pi_{t_i}$.

\begin{prop}[Matthews \cite{Matt66}]\label{prop:Matthews-conjugacy in wreath products}
Fix a family $\{ t_i \mid i \in I\}$ of right-coset representatives for $\langle b \rangle$ in $B$. Two elements $(f,b)$ and $(g,c)$ are conjugate in $A \wr B$ if and only if there exists an element $z$ in $B$ such that $bz = zc$ and for all $i\in I$ either
\begin{itemize}
\item $\pi_{t_i}^{(z)}(g)=\pi_{t_i}(f)$ if $b$ is of infinite order; or
\item $\pi_{t_i}^{(z)}(g)$ is conjugate to $\pi_{t_i}(f)$ if $b$ is of finite order.
\end{itemize}
For such $z$ in $B$, a corresponding function $h$ such that $(f,b)(h,z)=(h,z)(g,c)$ is defined as follows: if $b$ is of infinite order then for each $i \in I$ and each $k \in \Z$ we set
				$$h(b^kt_i) = \left(\prod\limits_{j\leq k} f(b^jt_i)\right)\left( \prod\limits_{j\leq k} g(z^{-1}b^jt_i)\right)^{-1}$$ or if $b$ is of finite order $N$, then for each $i \in I$ and each $k = 0, \ldots , N-1$ we set
				$$h(b^kt_i) = \left(\prod\limits_{j=0}^{k} f(b^jt_i)\right) \alpha_{t_i} \left( \prod\limits_{j=0}^{k} g(z^{-1}b^jt_i)\right)^{-1}$$ where $\alpha_{t_i}$ is any element satisfying $\pi_{t_i}(f) \alpha_{t_i} = \alpha_{t_i} \pi_{t_i}^{(z)}(g)$.
\end{prop}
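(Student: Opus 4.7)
The plan is to expand the conjugacy equation $(f,b)(h,z)=(h,z)(g,c)$ directly using the semidirect product multiplication and then analyse the resulting functional equation one $\langle b\rangle$-coset at a time. Using $(f,b)(h,z) = (fh^b, bz)$ and $(h,z)(g,c) = (hg^z, zc)$, the $B$-components immediately yield $bz = zc$, while the $A^{(B)}$-components give the pointwise equation $f(x)h(b^{-1}x) = h(x)g(z^{-1}x)$ for every $x \in B$. Rearranging produces the forward recursion
$$h(bx) \;=\; f(bx)\, h(x)\, g(z^{-1}bx)^{-1}.$$

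Since left multiplication by $\langle b\rangle$ partitions $B$ into the cosets $\langle b\rangle t_i$, and the recursion only relates values of $h$ within a single orbit, I would fix $i$ and iterate the recursion, expressing $h(b^k t_i)$ as an explicit product of $f$-values accumulating on the left (with higher $j$ leftmost) times $h(t_i)$ times an inverted product of $g$-values on the right. The ordering of multiplication dictated by the recursion is precisely the convention used to define $\pi_{t_i}$ and $\pi_{t_i}^{(z)}$, which will be essential for the compatibility conditions to come out in the correct form.

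From here the two cases diverge. In the infinite-order case, the demand that $h$ have finite support pins down $h(t_i)$ by requiring $h(b^k t_i) = e_A$ for $k$ sufficiently negative; since $f$ and $g$ have finite support, the iterated formula truncates to the product displayed in the proposition. Requiring also $h(b^k t_i) = e_A$ for $k \gg 0$ then collapses the telescoped product into exactly $\pi_{t_i}(f) = \pi_{t_i}^{(z)}(g)$. In the finite-order case, the recursion must instead close up after $N$ steps, i.e.\ $h(b^N t_i) = h(t_i)$; setting $\alpha_{t_i} = h(t_i)$ and comparing the two sides after $N$ iterations collapses the telescope to $\pi_{t_i}(f)\, \alpha_{t_i} = \alpha_{t_i}\, \pi_{t_i}^{(z)}(g)$, so $\pi_{t_i}(f)$ and $\pi_{t_i}^{(z)}(g)$ must be conjugate in $A$ via some $\alpha_{t_i}$, and any such $\alpha_{t_i}$ determines a valid $h$ on the orbit.

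Conversely, given $z$ with $bz=zc$ and, on each orbit, data $\alpha_{t_i}$ satisfying the stated compatibility (or taking $h(t_i)=1$ in the infinite-order setting), I would define $h$ by the displayed formulae, verify that $h$ has finite support (immediate from the finite supports of $f$ and $g$), and check directly that the functional equation holds on every point of $B$. The main subtlety throughout will be bookkeeping with the non-commutative products in $A$: specifically, aligning the left-to-right order arising from iterating the recursion with the ordering convention used to define $\pi_{t_i}$ and $\pi_{t_i}^{(z)}$, so that the compatibility conditions $\pi_{t_i}(f)=\pi_{t_i}^{(z)}(g)$ and $\pi_{t_i}(f)\alpha_{t_i}=\alpha_{t_i}\pi_{t_i}^{(z)}(g)$ appear in exactly the form claimed.
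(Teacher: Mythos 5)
The paper does not actually prove this Proposition --- it is quoted from Matthews \cite{Matt66} and used as a black box --- so there is no internal proof to compare against. Your outline is the standard derivation and is essentially correct: expanding the product gives $bz=zc$ and the pointwise identity $f(x)h(b^{-1}x)=h(x)g(z^{-1}x)$, the recursion propagates along each coset $\langle b\rangle t_i$, finite support of $h$ forces the value at $-\infty$ to be trivial and the value at $+\infty$ to be $\pi_{t_i}(f)\pi_{t_i}^{(z)}(g)^{-1}$ in the infinite-order case, and periodicity $h(b^N t_i)=h(t_i)$ forces the conjugacy condition in the finite-order case. One bookkeeping point (of the kind you yourself flag) is worth making explicit: after $N$ iterations the closing condition reads $\bigl(\prod_{j=1}^{N}f(b^jt_i)\bigr)h(t_i)=h(t_i)\prod_{j=1}^{N}g(z^{-1}b^jt_i)$, and since $\prod_{j=1}^{N}f(b^jt_i)=f(t_i)\,\pi_{t_i}(f)\,f(t_i)^{-1}$ (a cyclic shift of $\pi_{t_i}(f)$, because the products defining $\pi_{t_i}$ start at $j=0$), the element satisfying $\pi_{t_i}(f)\alpha_{t_i}=\alpha_{t_i}\pi_{t_i}^{(z)}(g)$ is $\alpha_{t_i}=f(t_i)^{-1}h(t_i)g(z^{-1}t_i)$ rather than $h(t_i)$ itself; this is exactly why the displayed formula for $h(b^kt_i)$ has the products running from $j=0$ to $k$ around $\alpha_{t_i}$. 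With that adjustment your argument reproduces the statement exactly, including the claim that any conjugator $\alpha_{t_i}$ yields a valid $h$.
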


\subsection{Upper Bounds for Lengths of Short Conjugators}\label{sec:short conj in wreath - upper bound}
Proposition \ref{prop:Matthews-conjugacy in wreath products} gives us an explicit description of a particular conjugator for two elements in $A \wr B$. The following Lemma tells us that any conjugator between two elements has a concrete description similar to that given by Matthews in the preceding Proposition. With this description at our disposal we will be able to determine their size and thus find a short conjugator.

\begin{lemma}\label{lem:coset reps}
Let $(h,z),(f,b),(g,c) \in A \wr B$ be such that $(f,b)(h,z)=(h,z)(g,c)$. Then there is a set of right-coset representatives $\{t_i \mid i \in I \}$ of $\langle b \rangle$ in $B$ such that, if $b$ is of infinite order then
				$$h(b^kt_i) = \left(\prod\limits_{j\leq k} f(b^jt_i)\right)\left( \prod\limits_{j\leq k} g(z^{-1}b^jt_i)\right)^{-1}$$
for every $i \in I$ and $k \in \Z$; if $b$ is of finite order $N$ then
				$$h(b^kt_i) = \left(\prod\limits_{j=0}^{k} f(b^jt_i)\right) \alpha_{t_i} \left( \prod\limits_{j=0}^{k} g(z^{-1}b^jt_i)\right)^{-1}$$
for every $i \in I$ and $k=0,\ldots,N-1$ and where $\alpha_{t_i}$ satisfies $\pi_{t_i}(f) \alpha_{t_i} = \alpha_{t_i} \pi_{t_i}^{(z)}(g)$. Furthermore, for any element $\alpha_{t_i}$ satisfying this relationship there exists some conjugator $(h,z)$ with $h$ of the above form.
\end{lemma}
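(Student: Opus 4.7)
The strategy is to convert the conjugator relation into a pointwise recurrence for $h$ and then read off its values along each right coset of $\langle b \rangle$. Expanding $(f,b)(h,z) = (h,z)(g,c)$ in the semidirect product gives $bz = zc$ together with the functional equation $fh^b = hg^z$, which pointwise says
\[ f(x)\,h(b^{-1}x) = h(x)\,g(z^{-1}x) \qquad \text{for every } x \in B, \]
or equivalently $h(x) = f(x)\, h(b^{-1}x)\, g(z^{-1}x)^{-1}$. Since this ties $h(x)$ to $h(b^{-1}x)$, its orbits are exactly the right cosets $\langle b \rangle t$, so it suffices to fix one representative $t_i$ per coset and read off $h$ along each orbit.

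Suppose first that $b$ has infinite order. The key observation is that $f$, $g$ and $h$ all have finite support, so within each coset we may replace $t$ by $b^{-M} t$ for $M$ sufficiently large to force $h(b^k t_i) = f(b^k t_i) = g(z^{-1}b^k t_i) = 1$ whenever $k \leq 0$, with $t_i := b^{-M}t$. The claimed formula then holds trivially for $k \leq 0$ (both sides equal the identity in $A$), and a one-step induction using the recurrence propagates it to all $k > 0$.

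When $b$ has finite order $N$ we cannot shift out of the support, so we pick any representative $t_i$ and set $\alpha_{t_i} := f(t_i)^{-1} h(t_i) g(z^{-1}t_i)$. Iterating the recurrence forward from $k=0$ produces the asserted formula for $k = 0, \dots, N-1$. The real content is the compatibility coming from $b^N t_i = t_i$: the equality $h(b^N t_i) = h(t_i)$, combined with the reordering of finite products required by the convention that higher $j$ lies to the left in the definition of $\pi_{t_i}$, unwinds to exactly $\pi_{t_i}(f)\,\alpha_{t_i} = \alpha_{t_i}\, \pi_{t_i}^{(z)}(g)$. I expect this reordering bookkeeping to be the main computational point requiring care, since $A$ is not assumed abelian and the conjugation by $f(t_i)$ hidden inside the cyclic shift is what converts $h(t_i)$ into a genuine solution of the $\pi$-conjugation equation.

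For the closing ``furthermore'' clause one goes in the opposite direction: given any $z$ with $bz = zc$ and any $\alpha_{t_i}$ satisfying the required conjugation relation, \emph{define} $h$ on each coset by the displayed formula. Running the recurrence forwards shows directly that the resulting pair $(h,z)$ satisfies $(f,b)(h,z) = (h,z)(g,c)$, with the relation on the $\alpha_{t_i}$ being precisely what is needed to guarantee cyclic consistency in the finite-order case. This is essentially Matthews' construction in Proposition \ref{prop:Matthews-conjugacy in wreath products} rewritten to accommodate an arbitrary choice of $\alpha_{t_i}$.
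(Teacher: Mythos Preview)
Your argument is correct and takes a genuinely different route from the paper. The paper's proof is comparative: it invokes Matthews' Proposition~\ref{prop:Matthews-conjugacy in wreath products} to produce one explicit conjugator $(h_1,z_1)$ with respect to an initial set of representatives $\{s_i\}$, observes that the given $(h,z)$ differs from it by a centraliser element $(\psi,y)$, sets $t_i = y s_i$, and then obtains the formula for $h$ by ``shuffling'' the factor $\psi$ across the $f$--terms using the centraliser identity $\psi(x)f(y^{-1}x)=f(x)\psi(b^{-1}x)$. In the infinite-order case $\psi$ eventually runs off the finite support and disappears; in the finite-order case it is absorbed into $\alpha_{t_i}$, and a separate computation identifying $\psi(b^{-1}t_i)\mathcal{C}_{s_i}$ with $\mathcal{C}_{t_i}$ handles the ``furthermore'' clause. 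You bypass all of this by extracting the recurrence $h(x)=f(x)\,h(b^{-1}x)\,g(z^{-1}x)^{-1}$ directly from the conjugation equation and, in the infinite-order case, simply translating each coset representative far enough to the left that the recurrence has trivial initial data. This is more elementary, does not need Matthews' construction as input for the main claim, and in the finite-order case actually shows that \emph{any} set of representatives works rather than a specially constructed one. The paper's route, in exchange, makes transparent the structural point that any two conjugators for the same pair differ by a centraliser element, which is a useful perspective even though it is not logically required for the lemma.
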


\begin{proof}
Fix a set of coset representatives $\{s_i \mid i \in I \}$. By Matthews' argument there exists a conjugator $(h_1,z_1) \in A \wr B$ for $(f,b)$ and $(g,c)$ as described in \mbox{Proposition \ref{prop:Matthews-conjugacy in wreath products},} with respect to the coset representatives $\{s_i \mid i \in I\}$. Since $(h,z)$ and $(h_1,z_1)$ are both conjugators, it follows that there exists some $(\psi,y)$ in $Z_\Gamma(f,b)$ such that $(h,z) = (\psi,y)(h_1,z_1)$.
This tells us that $z=y z_1$ and also that $h(x) = \psi(x) h_1(y^{-1}x)$ for each $x \in B$. Since $(\psi,y)$ is in the centraliser of $(f,b)$, we obtain two identities:
\begin{eqnarray}
yb &=& by \\
\label{eqn:lem:coset reps - centraliser equation 2}\psi(x)f(y^{-1}x) &=& f(x)\psi(b^{-1}x) \ \ \ \forall x \in B\textrm{.}
\end{eqnarray}
For each $i \in I$ we set $t_i = y s_i$. First suppose that $b$ is of infinite order. Then
\begin{eqnarray*}
h(b^kt_i) & = & \psi(b^k t_i)h_1(y^{-1} b^k t_i)\\
		  & = & \psi(b^k t_i)h_1(b^k s_i)  \\
		  & = & \psi(b^k t_i) \left(\prod\limits_{j \leq k}f(b^js_i)\right)\left(\prod\limits_{j \leq k} g(z_1^{-1}b^js_i) \right)^{-1}\\
		  & = & \psi(b^k t_i) \left(\prod\limits_{j \leq k}f(y^{-1}b^jt_i)\right)\left(\prod\limits_{j \leq k} g(z^{-1}b^jt_i) \right)^{-1}\textrm{.}
\end{eqnarray*}
We can apply equation (\ref{eqn:lem:coset reps - centraliser equation 2}) once, and then repeat this process to shuffle the $\psi$ term past all the terms involving $f$. This process terminates and the $\psi$ term vanishes because of the finiteness of support of both $\psi$ and of $f$. Hence, as required, we obtain:
\begin{eqnarray*}
h(b^kt_i) &=&  f(b^kt_i) \psi(b^{k-1} t_i) \left(\prod\limits_{j \leq k-1}f(y^{-1} b^j t_i)\right)  \left( \prod\limits_{j \leq k} g(z^{-1}b^jt_i) \right)^{-1}\\
& \vdots & \\
& = & \left( \prod\limits_{j \leq k} f(b^jt_i)\right)\left( \prod\limits_{j \leq k} g(z^{-1}b^jt_i) \right)^{-1} \textrm{.}
\end{eqnarray*}

If instead $b$ is of finite order, $N$ say, then for $0 \leq k \leq N-1$ we obtain
				$$h(b^kt_i) = \psi(b^kt_i) \left(\prod\limits_{j=0}^k f(y^{-1}b^jt_i) \right) \alpha_{s_i} \left( \prod\limits_{j=0}^k g(z^{-1}b^jt_i)\right)^{-1}$$
where $\alpha_{s_i}$ can be chosen, by Proposition \ref{prop:Matthews-conjugacy in wreath products}, to be any element which satisfies $\pi_{s_i}(f) \alpha_{s_i} = \alpha_{s_i} \pi_{s_i}^{(z_1)}(g)$. With equation (\ref{eqn:lem:coset reps - centraliser equation 2}) the $\psi (b^kt_i)$ term can be shuffled past the terms involving $f$. Unlike in the infinite order case the $\psi$ term will not vanish:
				$$h(b^kt_i) = \left(\prod\limits_{j=0}^k f(b^jt_i) \right) \psi(b^{-1}t_i) \alpha_{s_i} \left( \prod\limits_{j=0}^k g(z^{-1}b^jt_i)\right)^{-1} \textrm{.}$$
To confirm that $h$ is of the required form, all that is left to do is to verify that if we set $\alpha_{t_i}=\psi(b^{-1}t_i) \alpha_{s_i}$ then it will satisfy $\pi_{t_i}(f) \alpha_{t_i} = \alpha_{t_i} \pi_{t_i}^{(z)}(g)$. We will prove this while proving the final statement of the Lemma: that any element $\alpha_{t_i}$ satisfying $\pi_{t_i}(f) \alpha_{t_i} = \alpha_{t_i} \pi_{t_i}^{(z)}(g)$ will appear in this expression for some conjugator between $(f,b)$ and $(g,c)$.  Set 
		$$\mathcal{C}_{t_i}=\{ \alpha \mid \pi_{t_i}(f)\alpha = \alpha\pi_{t_i}^{(z)}(g)\} \ \textrm{ and } \ \mathcal{C}_{s_i}=\{ \alpha \mid \pi_{s_i}(f)\alpha = \alpha\pi_{s_i}^{(z_1)}(g)\}.$$
By Proposition \ref{prop:Matthews-conjugacy in wreath products}, we can choose $h_1$ above so that any element of $\mathcal{C}_{s_i}$ appears above in the place of $\alpha_{s_i}$. We need to check that $\mathcal{C}_{t_i}=\psi(b^{-1}t_i)\mathcal{C}_{s_i}$. Observe that we have two equalities:
\begin{eqnarray}
\label{eqn:lem:coset reps - equality 1}\psi(b^{-1}t_i)\pi_{s_i}(f)&=&\pi_{t_i}(f)\psi(b^{-1}t_i)\\
\label{eqn:lem:coset reps - equality 2}\pi_{t_i}^{(z)}(g)&=&\pi_{s_i}^{(z_1)}(g)
\end{eqnarray}
Equation (\ref{eqn:lem:coset reps - equality 2}) is straight-forward to show and was used above in the infinite order argument, while equation (\ref{eqn:lem:coset reps - equality 1}) follows by applying equation (\ref{eqn:lem:coset reps - centraliser equation 2}) $N$ times:
		$$\psi(b^{-1}t_i)\prod\limits_{j=0}^{N-1}f(b^js_i)=\prod\limits_{j=0}^{N-1}f(yb^js_i)\psi(b^{-(N+1)}t_i)$$
and then using the facts that $b$ has order $N$ and $y$ is in the centraliser of $b$.

Suppose that $\alpha_{s_i} \in \mathcal{C}_{s_i}$. Then, using equations (\ref{eqn:lem:coset reps - equality 1}) and (\ref{eqn:lem:coset reps - equality 2}):
\begin{eqnarray*}
e_A &=& \alpha_{s_i}^{-1} \pi_{s_i}(f)^{-1} \alpha_{s_i} \pi_{s_i}^{(z_1)}(g)\\
	&=& \alpha_{s_i}^{-1} \psi(b^{-1}t_i)^{-1} \pi_{t_i}(f)^{-1}  \psi(b^{-1}t_i) \alpha_{s_i} \pi_{t_i}^{(z)}(g).
\end{eqnarray*}
This confirms that $\psi(b^{-1}t_i) \mathcal{C}_{s_i} \subseteq \mathcal{C}_{t_i}$. On the other hand, suppose instead that $\alpha_{t_i} \in \mathcal{C}_{t_i}$. Then
\begin{eqnarray*}
e_A &=& \alpha_{t_i}^{-1} \pi_{t_i}(f)^{-1} \alpha_{t_i} \pi_{t_i}^{(z)}(g)\\
	&=& \alpha_{t_i}^{-1} \psi(b^{-1}t_i) \pi_{s_i}(f)^{-1}  \psi(b^{-1}t_i)^{-1} \alpha_{t_i} \pi_{s_i}^{(z_1)}(g).
\end{eqnarray*}
Hence $\psi(b^{-1}t_i)^{-1}\alpha_{t_i} \in \mathcal{C}_{s_i}$. In particular we get $\mathcal{C}_{t_i}=\psi(t^{-1})\mathcal{C}_{s_i}$ as required.
\end{proof}

Obtaining a short conjugator will require two steps. Lemma \ref{lem:wreath conjugator with short B part} is the first of these steps. Here we actually find the short conjugator, while in Lemma \ref{lem:wreath conj bound depending on B part} we show that the size of a conjugator $(h,z)$ can be bounded by a function involving the size of $z$ but independent of $h$ altogether.

Recall that the conjugacy length function of $B$ is the minimal function $$\CLF_B:\N \rightarrow \N$$ such that  if $b$ is conjugate to $c$ in $B$ and $d_B(e_B,b)+d_B(e_B,c)\leq n$ then there exists a conjugator $z \in B$ such that $d_B(e_B,z) \leq \CLF_B(n)$.

\begin{lemma}\label{lem:wreath conjugator with short B part}
Suppose $u=(f,b),v=(g,c)$ are conjugate elements in $\Gamma = A \wr B$ and let $n=d_\Gamma(1,u)+d_\Gamma(1,v)$. Then there exists $\gamma=(h,z) \in \Gamma$ such that $u \gamma = \gamma v$ and either:
\begin{enumerate}[label=(\arabic{enumi})]
\item\label{item:lem:wreath conjugator with short B part:conjugate to (1,b)} $d_B(e_B,z) \leq \CLF_B(n)$ if $(f,b)$ is conjugate to $(1,b)$; or
\item\label{item:lem:wreath conjugator with short B part:not conjugate to (1,b)} $d_B(e_B,z) \leq n$ if $(f,b)$ is not conjugate to $(1,b)$.
\end{enumerate}
\end{lemma}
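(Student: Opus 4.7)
The plan is to invoke Matthews' characterisation of conjugators (Proposition \ref{prop:Matthews-conjugacy in wreath products} and Lemma \ref{lem:coset reps}) to reduce the problem to choosing a short $z\in B$ with $bz=zc$ that is compatible with the $\pi_{t_i}$--data. Projecting $u\sim v$ to $B$ yields $b\sim c$ with $|b|+|c|\le n$, so $\CLF_B$ is directly applicable.

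For part \ref{item:lem:wreath conjugator with short B part:conjugate to (1,b)}, I would apply Proposition \ref{prop:Matthews-conjugacy in wreath products} with $z=e_B$ to the conjugacy $(f,b)\sim(1,b)$ to deduce $\pi_{t_i}(f)=1$ for every right coset representative of $\langle b \rangle$ (in the finite-order case using that ``conjugate to $1$ in $A$'' means ``equal to $1$''). Transitivity of conjugation in $\Gamma$, combined with the observation that $(1,b)\sim(1,c)$ via $(1,z)$ whenever $bz=zc$, yields $(g,c)\sim(1,c)$, so the same argument gives $\pi_{s_j}(g)=1$ for every coset representative of $\langle c \rangle$. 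The crucial point is that $\pi_s(g)$ depends on $s$ only through the coset $\langle c\rangle s$ in the infinite-order case, and only up to cyclic rotation (hence conjugation in $A$) in the finite-order case; either way $\pi_s(g)=1$ for every $s\in B$. Now pick $z\in B$ realising $\CLF_B$ on $b,c$, so $bz=zc$ and $d_B(e_B,z)\le\CLF_B(n)$. For any coset representative $t_i$ of $\langle b\rangle$, the identity $\pi_{t_i}^{(z)}(g)=\pi_{z^{-1}t_i}(g)=1=\pi_{t_i}(f)$ holds, and Lemma \ref{lem:coset reps} delivers a conjugator of the form $(h,z)$.

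For part \ref{item:lem:wreath conjugator with short B part:not conjugate to (1,b)}, the same Matthews criterion applied with $z=e_B$ now fails, forcing some $\pi_{t_i}(f)\ne 1$. Fix any conjugator $\gamma_0=(h_0,z_0)\in\Gamma$; since $\pi_{t_i}^{(z_0)}(g)$ either equals or is conjugate to $\pi_{t_i}(f)$, it is non-trivial, so there exist integers $j,j'$ with $x:=b^jt_i\in\Supp{f}$ and $y:=z_0^{-1}b^{j'}t_i\in\Supp{g}$. A brief rearrangement gives $xy^{-1}=b^{j-j'}z_0$, i.e.\ $xy^{-1}$ lies in the coset $\langle b\rangle z_0$. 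Since $u^k\in Z_\Gamma(u)$ for every $k\in\Z$, the element $u^{j-j'}\gamma_0$ remains a conjugator from $u$ to $v$, and its $B$--component is exactly $xy^{-1}$. Lemma \ref{lem:wreath metric} forces $d_B(e_B,x)\le|u|$ and $d_B(e_B,y)\le|v|$, since both points lie on a shortest travelling-salesman path of the respective length, giving $d_B(e_B,xy^{-1})\le n$.

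The main difficulty is part \ref{item:lem:wreath conjugator with short B part:conjugate to (1,b)}: there is no support geometry to exploit, and one must instead leverage the coset-invariance (up to conjugation in $A$) of the $\pi$--maps to show that any $z$ realising $\CLF_B(n)$ for the pair $b,c$ automatically lifts to a conjugator in $\Gamma$.
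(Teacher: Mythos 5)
Your proposal is correct and follows essentially the same route as the paper: part \ref{item:lem:wreath conjugator with short B part:not conjugate to (1,b)} is the paper's argument almost verbatim (shift a given conjugator by the central power $u^{j-j'}$ so its $B$--component becomes a product of a point of $\Supp{f}$ with the inverse of a point of $z_0\Supp{g}$, then bound by $K(\Supp{f},b)+K(\Supp{g},c)\le n$), and part \ref{item:lem:wreath conjugator with short B part:conjugate to (1,b)} rests on the same facts ($\pi_{t_i}(f)=e_A$ for all $i$, plus lifting a $\CLF_B$--short conjugator of $b,c$), which the paper merely packages as an explicit reduction to the case $u=(1,b)$, $v=(1,c)$. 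The only nit: the object that produces the conjugator $(h,z)$ from the verified $\pi$--data is Proposition \ref{prop:Matthews-conjugacy in wreath products}, not Lemma \ref{lem:coset reps}, which runs in the opposite direction.
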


\begin{proof}
Without loss of generality we may assume that $d_\Gamma(1,u) \leq d_\Gamma(1,v)$. By Lemma \ref{lem:coset reps}, if $(h_0,z_0)$ is a conjugator for $u$ and $v$ then there exists a family of right-coset representatives $\{ t_i  \mid i \in I \}$ for $\langle b \rangle$ in $B$ such that 
	$$ \pi_{t_i}^{(z_0)}(g)=\pi_{t_i}(f) \ \ \ \textrm{or} \ \ \ \pi_{t_i}^{(z_0)}(g) \ \textrm{is conjugate to} \ \pi_{t_i}(f)$$
for every $i \in I$ according to whether $b$ is of infinite or finite order respectively (the former follows from the finiteness of the support of the function $h$ given by Lemma \ref{lem:coset reps}). 

By Proposition \ref{prop:Matthews-conjugacy in wreath products}, $(f,b)$ is conjugate to $(1,b)$ if and only if $\pi_{t_i}(f)=e_A$ for every $i \in I$. In this case we take 
\begin{equation*}
				h(b^kt_i)=  \left\{ \begin{array}{ll} \prod\limits_{j\leq k} f(b^jt_i) & \textrm{if $b$ is of infinite order;}\\
				\prod\limits_{j=0}^k f(b^jt_i) & \textrm{if $b$ is of finite order $N$ and $0 \leq k < N$.}\end{array}\right.
\end{equation*}
One can then verify that $(f,b)(h,e_B)=(h,e_B)(1,b)$. Thus we have reduced \ref{item:lem:wreath conjugator with short B part:conjugate to (1,b)} to the case when $u=(1,b)$ and $v=(1,c)$. For this we observe that any conjugator $z$ for $b,c$ in $B$ will give a conjugator $(1,z)$ for $u,v$ in $A \wr B$. Thus \ref{item:lem:wreath conjugator with short B part:conjugate to (1,b)} follows.

If on the other hand $(f,b)$ is not conjugate to $(1,b)$ then by Proposition \ref{prop:Matthews-conjugacy in wreath products}, $\pi_{t_i}(f) \neq e_A$ for some $i \in I$. Fix some such $i$, observe that there exists \mbox{$k \in \Z$} satisfying $b^kt_i \in \Supp{f}$ and there must also exist some $j \in \Z$ so that \mbox{$z_0^{-1}b^jt_i \in \Supp{g}$.} Pre-multiply $(h_0,z_0)$ by $(f,b)^{k-j}$ to get $\gamma=(h,z)$, where \mbox{$z=b^{k-j}z_0$} and $\gamma$ is a conjugator for $u$ and $v$ since $(f,b)^{k-j}$ belongs to the centraliser of $u$ in $\Gamma$. By construction, $z^{-1} b^k t_i=z_0^{-1} b^j t_i$ and hence is contained in the support of $g$. We finish by applying the triangle inequality and using the left-invariance of the word metric $d_B$ as follows:
\begin{eqnarray*}
d_B(e_B,z^{-1}) & \leq & d_B(e_B,z^{-1}b^kt_i) + d_B(z^{-1}b^kt_i,z^{-1}) \\
		& \leq & d_B(e_B,b^kt_i) + d_B(e_B, z^{-1}b^kt_i) \\
		& \leq & K(\Supp{f},b) + K(\Supp{g},c) \\
		& \leq &  d_\Gamma(1,u)+d_\Gamma(1,v).
\end{eqnarray*}
This completes the proof.
\end{proof}

Soon we will give Theorem \ref{thm:wreath conj length}, which will describe the length of short conjugators in wreath products $A \wr B$ where $B$ is torsion-free. Before we dive into this however, it will prove useful in Section \ref{sec:Conjugacy in free solvable groups}, when we look at conjugacy in free solvable groups, to understand how the conjugators are constructed. In particular, it is important to understand that the size of a conjugator $(h,z) \in A \wr B$ can be expressed in terms of the size of $z$ in $B$ with no need to refer to the function $h$ at all. This is what we explain in Lemma \ref{lem:wreath conj bound depending on B part}.

For $b \in B$, let $\delta_{\langle b \rangle}^B(n) = \max \{ m \in \Z \mid d_B(e_B,b^m) \leq n \}$ be the subgroup distortion of $\langle b \rangle$ in $B$. Fix a finite generating set $X$ for $B$ and let $\Cay(B,X)$ be the corresponding Cayley graph.

\begin{lemma}\label{lem:wreath conj bound depending on B part}
Suppose $u=(f,b),v=(g,c)$ are conjugate elements in $\Gamma = A \wr B$ and let $n=d_\Gamma(1,u)+d_\Gamma(1,v)$. Suppose also that $b$ and $c$ are of infinite order in $B$. If $\gamma = (h,z)$ is a conjugator for $u$ and $v$ in $\Gamma$ then 
\begin{samepage}				$$d_{\Gamma}(1,\gamma) \leq (n+1)P(2\delta_{\langle b \rangle}^B(P)+1)$$
where $P=d_B(1,z)+n$.\end{samepage}
\end{lemma}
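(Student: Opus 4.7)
The plan is to estimate $d_\Gamma(1,\gamma) = K(\Supp{h}, z) + \modulus{h}$ via Lemma \ref{lem:wreath metric} by exploiting the explicit formula for $h$ provided by Lemma \ref{lem:coset reps}. That Lemma yields right coset representatives $\{t_i\}$ for $\langle b \rangle$ in $B$ such that
$$h(b^k t_i) = \Bigl(\prod_{j \leq k} f(b^j t_i)\Bigr)\Bigl(\prod_{j \leq k} g(z^{-1} b^j t_i)\Bigr)^{-1}$$
from which I immediately read off the pointwise bound $\modulus{h(b^k t_i)} \leq \modulus{f} + \modulus{g} \leq n$. Only those orbits $\langle b \rangle t_i$ that meet $\Supp{f} \cup z\Supp{g}$ can contribute to $\Supp{h}$, so at most $\modulus{\Supp{f}} + \modulus{\Supp{g}} \leq 2n$ orbits are relevant, and I may choose each such $t_i$ inside $\Supp{f} \cup z\Supp{g}$ itself, giving $d_B(e_B, t_i) \leq P = d_B(1,z) + n$.

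Next I localise $\Supp{h}$ within each relevant orbit. Fix such an $i$ and let $k_i^-, k_i^+$ be the least and greatest integers $j$ with $b^j t_i \in \Supp{f} \cup z\Supp{g}$. For $k < k_i^-$ both partial products in the formula above are empty; for $k \geq k_i^+$ both are complete and equal to $\pi_{t_i}(f)$ and $\pi_{t_i}^{(z)}(g)$ respectively, which agree by Proposition \ref{prop:Matthews-conjugacy in wreath products} (using that $(f,b)$ is conjugate to $(g,c)$ and $b$ has infinite order). In either case $h(b^k t_i) = e_A$, so $\Supp{h} \cap \langle b \rangle t_i$ lies inside the block $\{b^k t_i : k_i^- \leq k < k_i^+\}$.

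The main obstacle is then to bound each block size by $2\delta_{\langle b \rangle}^B(P) + 1$. The endpoints $b^{k_i^\pm} t_i$ and the representative $t_i$ all lie within $P$ of $e_B$, but this only directly controls the lengths of conjugates $t_i^{-1} b^{k_i^\pm} t_i$, not of $b^{k_i^\pm}$ itself, so a naive appeal to the definition of $\delta_{\langle b \rangle}^B$ does not suffice. I plan to surmount this by choosing $t_i$ judiciously among the points of the orbit in $\Supp{f} \cup z\Supp{g}$ — centring it between $b^{k_i^-} t_i$ and $b^{k_i^+} t_i$ and using the triangle inequality in $\Cay(B,X)$ to convert the bounds into honest estimates $\modulus{b^{k_i^\pm}} \leq P$, which then give $\modulus{k_i^\pm} \leq \delta_{\langle b \rangle}^B(P)$.

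Combining everything, the pointwise bound together with the counts gives $\modulus{h} \leq 2n \cdot n \cdot (2\delta_{\langle b \rangle}^B(P) + 1)$. To bound $K(\Supp{h}, z)$ I will construct an explicit route in $\Cay(B, X)$ that starts at $e_B$, hops between the $\leq 2n$ relevant orbits via their representatives at cost at most $2P$ per hop, traverses the block within each orbit step-by-step at cost at most $\modulus{b}(2\delta_{\langle b \rangle}^B(P) + 1) \leq n(2\delta_{\langle b \rangle}^B(P) + 1)$, and finally proceeds to $z$. Summing the two contributions and absorbing constants into the factor $(n+1)P$ yields the stated bound.
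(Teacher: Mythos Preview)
Your overall strategy is exactly the paper's: use the formula for $h$ from Lemma~\ref{lem:coset reps}, count the relevant cosets of $\langle b\rangle$, localise $\Supp{h}$ within each coset between extreme indices, bound block lengths via $\delta_{\langle b\rangle}^B$, build a zig-zag path to estimate $K(\Supp{h},z)$, and bound $\modulus{h}$ pointwise. There is, however, a genuine error in your step-cost estimate. The word metric is only left-invariant, so the cost of one step along an orbit is $d_B(b^j t_i, b^{j+1} t_i)=\modulus{t_i^{-1} b\, t_i}$, not $\modulus{b}$, and this conjugate can be arbitrarily large compared with $\modulus{b}$. The paper bounds this quantity by $n$ via a different observation: pick $j$ with $b^j t_i\in\Supp{f}$ (respectively $z\Supp{g}$); since any path from $e_B$ to $b$ through $\Supp{f}$ must visit $b^j t_i$, one has $d_B(e_B,b^j t_i)+d_B(b^j t_i,b)\leq K(\Supp{f},b)\leq n$, and left-invariance gives $d_B(b,b^{j+1}t_i)=d_B(e_B,b^j t_i)$, so the triangle inequality through $b$ yields $d_B(b^j t_i,b^{j+1}t_i)\leq n$.

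Your ``centring'' plan for the block-size bound is also shaky. With $t_i$ constrained to lie in $\Supp{f}\cup z\Supp{g}$ you cannot in general place it midway between the extremes, and from $\modulus{t_i}\leq P$ and $\modulus{b^{k_i^\pm}t_i}\leq P$ the triangle inequality only delivers $\modulus{b^{k_i^\pm}}\leq 2P$, not $P$. The paper instead sets $C=\Supp{f}\cup z\Supp{g}$, bounds $\mathrm{diam}(C\cup\{e_B,z\})\leq P$, and invokes $m_2-m_1\leq\delta_{\langle b\rangle}^B(\mathrm{diam}(C))\leq\delta_{\langle b\rangle}^B(P)$ directly from the fact that both $b^{m_1}t_i$ and $b^{m_2}t_i$ lie in $C$.
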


\begin{proof}
Without loss of generality we may assume $d_\Gamma(1,u) \leq d_\Gamma(1,v)$. From Lemma \ref{lem:coset reps} we have an explicit expression for $h$. We use this expression to give an upper bound for the size of $(h,z)$, making use of Lemma \ref{lem:wreath metric} which tells us
				$$d_{\Gamma}(1,\gamma) = K(\Supp{h},z) + \modulus{h}$$
where $K(\Supp{h},z)$ is the length of the shortest path in $\Cay(B,X)$ from $e_B$ to $z$ travelling via every point in $\Supp{h}$ and $\modulus{h}$ is the sum of terms $d_A(e_A,f(x))$ over all $x \in B$.

\begin{figure}[t!]
\labellist \small\hair 4pt
	\pinlabel $e_B$ [b] at 156 242
	\pinlabel $z$ [t] at 192 2
	\pinlabel $q_0$ [b] at 112 235
	\pinlabel $p_1$ [b] at 164 218
	\pinlabel $q_s$ [l] at 224 29
	\pinlabel $p_s$ [t] at 206 58
	\pinlabel $\langle b\rangle t_1$ [l] at 359 236
	\pinlabel $\langle b\rangle t_2$ [l] at 359 212
	\pinlabel $\langle b\rangle t_{s-1}$ [l] at 359 61
	\pinlabel $\langle b\rangle t_s$ [l] at 361 30
\endlabellist
\vspace{1mm}\centering\includegraphics[width=9cm]{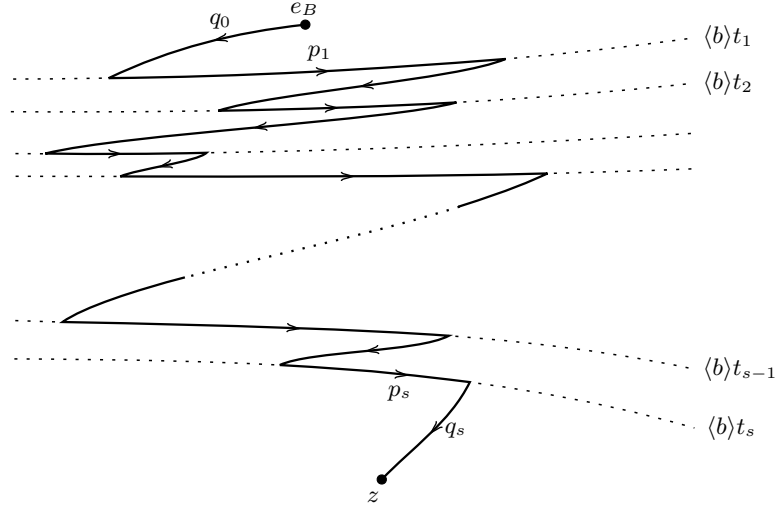}\vspace{3mm}
\caption[A short path covering $\Supp{h}$]{We build a path from $e_B$ to $b$ by piecing together paths $q_i$ and $p_i$, where the paths $p_i$ run though the intersection of $\Supp{h}$ with a coset $\langle b \rangle t_i$ and the paths $q_i$ connect these cosets.}\label{fig:K(Supp(h),z)}
\end{figure}

We begin by obtaining an upper bound on the size of $K(\Supp{h},z)$. To do this we build a path from $e_B$ to $z$, zig-zagging along cosets of $\langle b \rangle$, see Figure \ref{fig:K(Supp(h),z)}. Lemma \ref{lem:coset reps} tells us that there is a family of right-coset representatives $\{ t_i \}_{i \in I}$ such that
				$$h(b^kt_i) = \left(\prod\limits_{j \leq k}f(b^jt_i)\right) \left( \prod\limits_{j \leq k} g(z^{-1}b^jt_i) \right)^{-1}$$
for every $i \in I$ and $k \in \Z$. This expression for $h$ tells us where in each coset the support of $h$ will lie. In particular, note that if we set $C=\Supp{f}\cup z\Supp{g}$, then 
		$$\Supp{h}\cap \langle b \rangle t_i \neq \emptyset\  \Longrightarrow\  C \cap \langle b \rangle t_i \neq \emptyset.$$
Furthermore, in each coset the support of $h$ must lie between some pair of elements in $C$. Let $t_1 , \ldots , t_s$ be all the coset representatives for which $\Supp{h}$ intersects the coset $\langle b \rangle t_i$. The number $s$ of such cosets is bounded above by the size of the set $\Supp{f} \cup \Supp{g}$, which is bounded above by $d_\Gamma(1,u)+d_\Gamma(1,v)=n$.

If we restrict our attention to one of these cosets, $\langle b \rangle t_i$, then there exist integers $m_1 < m_2$ such that $b^j t_i \in \Supp{h}$ implies $m_1 \leq j \leq m_2$. We can choose $m_1$ and $m_2$ so that $b^mt_i \in C$ for $m \in \{m_1,m_2\}$. Let $p_i$ be a piecewise geodesic in the Cayley graph of $B$ which connects $b^{m_1}t_i$ to $b^{m_2}t_i$ via $b^jt_i$ for every $m_1 < j < m_2$. The length of $p_i$ will be at most 
				$$d_B(b^jt_i,b^{j+1}t_i)\delta_{\langle b \rangle}^B(\mathrm{diam}(C))$$
for any $j \in \Z$. Choose $j\in \Z$ such that $b^jt_i \in C$. In that case that $b^jt_i \in \Supp{f}$ we get that
\begin{eqnarray*}
 d_B(b^j t_i , b^{j+1}t_i) & \leq & d_B(b^j t_i , b) + d_B(b,b^{j+1} t_i) \\
			   & =    & d_B(b^j t_i, b) + d_B(e_B,b^j t_i) \\
			   & \leq & K(\Supp{f},b) \leq n
\end{eqnarray*}
where the last line follows because any path from $e_B$ to $b$ via all points in $\Supp{f}$ will have to be at least as long as the path from $e_B$ to $b$ via the point $b^jt_i$. Similarly, in the case when $z^{-1}b^jt_i \in \Supp{g}$, we get
\begin{eqnarray*}
 d_B(b^j t_i, b^{j+1} t_i) & \leq & d_B(b^jt_i,zc)+d_B(zc,b^{j+1}t_i)\\
 						   & = & d_B(b^jt_i,zc) + d_B(z,b^jt_i)\\
 						   & \leq & K(\Supp{g},c) \leq n
\end{eqnarray*}
where we obtain the last line because a shortest path from $z$ to $zc$ via $z\Supp{g}$ will have length precisely $K(\Supp{g},c)$. Hence, in either case we get that the path $p_i$ has length bounded above by $n\delta_{\langle b \rangle}^B(\mathrm{diam}(C))$.

We will now show that $\mathrm{diam}(C \cup \{e_B,z\})\leq n + d_B(1,z)=P$. This diameter will be given by the length of a path connecting some pair of points in this set. We take a path through $e_B$, $z$ and all points in the set $C$, a path such as that in Figure \ref{fig:supp(h) path}. The length of this path will certainly be bigger than the diameter. Hence we have
\begin{eqnarray*}
\mathrm{diam}(C) & \leq & K(\Supp{f},b)+ d_B(1,z) + K(\Supp{g},c) \\
  & \leq & n+d_B(1,z)=P.
\end{eqnarray*}

\begin{figure}[t!] \vspace{3mm}
\labellist \small\hair 4pt
	\pinlabel $e_B$ [r] at 1 1
	\pinlabel $z$ [r] at 8 174
	\pinlabel $\Supp{f}$ [t] at 81 2
	\pinlabel $z\Supp{g}$ [b] at 93 231
	\pinlabel $b$ [l] at 182 53
	\pinlabel $bz=zc$ [l] at 176 190
\endlabellist
\vspace{1mm}\centering\includegraphics[width=3.5cm]{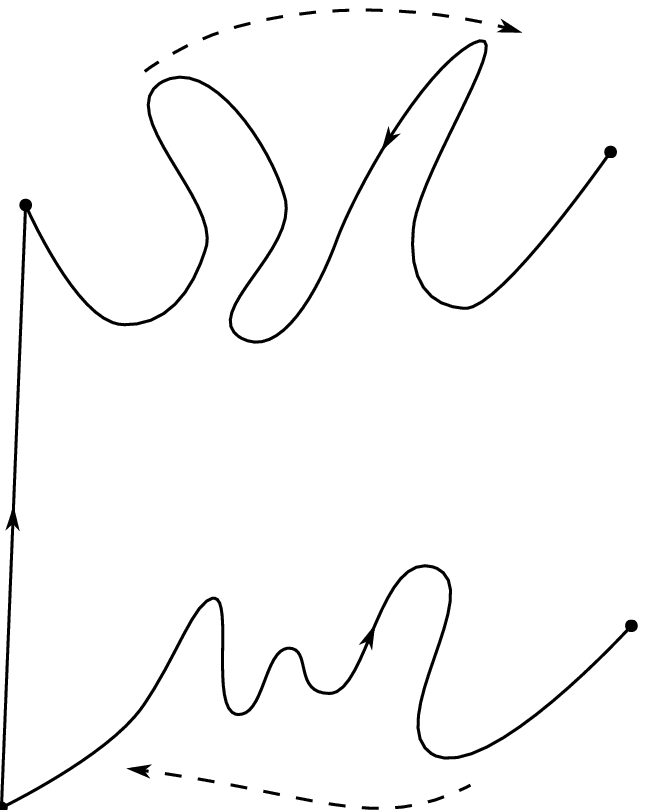}\vspace{1.5mm}
\caption[Bounding $D$]{The concatenation of three paths: a path from $b$ to $e_B$ through $\Supp{f}$, followed by a path from $e_B$ to $z$, then finish off with a path from $z$ to $zc$ through each point in $z\Supp{g}$.}\label{fig:supp(h) path}
\end{figure}

For $i = 1 , \ldots , s-1$ let $q_i$ be a geodesic path which connects the end of $p_i$ with the start of $p_{i+1}$. Let $q_0$ connect $e_B$ with the start of $p_1$ and $q_s$ connect the end of $p_s$ with $z$. Then the concatenation of paths $q_0, p_1, q_1 , \ldots , q_{s-1},p_s,q_s$ is a path from $e_B$ to $z$ via every point in $\Supp{h}$.

For each $i$, the path $q_i$ will be a geodesic connecting two points of $C \cup \{ e_B,z \}$. The above upper bound for the diameter of this set tells us that each $q_i$ will have length at most $n + d_B(1,z)=P$.

Hence our path $q_0, p_1, q_1 , \ldots , q_{s-1},p_s,q_s$ has length bounded above by
				$$(n+1)\textit{P} + n^2\delta_{\langle b \rangle}^B(P)\leq (n+1)P(\delta_{\langle b \rangle}^B(P)+1)$$
thus giving an upper bound for $K(\Supp{h},z)$.

Now we need to turn our attention to an upper bound for $\modulus{h}$. By the value of $h(b^kt_i)$ given to us by Lemma \ref{lem:coset reps} we see that
\begin{eqnarray*}
d_A(e_A,h(b^kt_i)) & \leq & \sum_{j \leq k} d_A(e_A,g(z^{-1}b^jt_i)) + \sum_{j \leq k} d_A(e_A,f(b^jt_i)) \\
		    & \leq & \modulus{g} + \modulus{f} \leq n
\end{eqnarray*}
The number of elements $b^kt_i$ in the support of $h$ can be counted in the following way. Firstly, the number of $i\in I$ for which $\langle b \rangle t_i \cap \Supp{h} \neq \emptyset$ is equal to $s$, which we showed above to be bounded by $n$. Secondly, for each such $i$, recall that there exists $m_1 \leq m_2$ such that $b^jt_i \in \Supp{h}$ implies $m_1 \leq j \leq m_2$. Hence for each $i$ the number of $k \in \Z$ for which $b^kt_i \in \Supp{h}$ is bounded above by $m_2-m_1 \leq \delta_{\langle b \rangle}^B(P)$. So in conclusion we have
\begin{eqnarray*}
d_\Gamma(1,\gamma) & = & K(\Supp{h},z) + \modulus{h} \\
		   & \leq & (n+1)P(\delta_{\langle b \rangle}^B(P)+1) + n^2 \delta_{\langle b \rangle}^B(P) \\
		   & \leq & (n+1)P(2\delta_{\langle b \rangle}^B(P)+1)
\end{eqnarray*}
where $n=d_\Gamma(1,u) + d_\Gamma(1,v)$ and $P=d_B(e_B,z)+n$.
\end{proof}

\begin{thm}\label{thm:wreath conj length}
Suppose $A$ and $B$ are finitely generated. Let $u=(f,b),v=(g,c) \in \Gamma=A\wr B$, with $b$ and $c$ of infinite order, and set $n = d_\Gamma(1,u)+d_\Gamma(1,v)$. Then $u,v$ are conjugate if and only if there exists a conjugator $\gamma \in \Gamma$ such that 
				$$d_\Gamma (1,\gamma) \leq (n+1)P(2\delta_{\langle b \rangle}^B(P)+1)$$
where $P=2n$ if $(f,b)$ is not conjugate to $(1,b)$ and $P=n+\CLF_B(n)$ otherwise.
\end{thm}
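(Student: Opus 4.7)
The plan is to obtain the theorem by combining Lemma \ref{lem:wreath conjugator with short B part} with Lemma \ref{lem:wreath conj bound depending on B part}; all the mathematical work has been done in those two lemmas, and what remains is largely bookkeeping. The backward direction of the ``if and only if'' is immediate: the existence of any $\gamma \in \Gamma$ with $u\gamma = \gamma v$ is by definition what it means for $u$ and $v$ to be conjugate. So the content lies in the forward direction.

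Suppose therefore that $u=(f,b)$ and $v=(g,c)$ are conjugate, with $b,c$ of infinite order, and set $n = d_\Gamma(1,u) + d_\Gamma(1,v)$. First I would invoke Lemma \ref{lem:wreath conjugator with short B part} to select a particular conjugator $\gamma = (h,z)$, with the guarantee that
$$d_B(e_B,z) \leq \begin{cases} n & \text{if $(f,b)$ is not conjugate to $(1,b)$,} \\ \CLF_B(n) & \text{if $(f,b)$ is conjugate to $(1,b)$.} \end{cases}$$
In either case the quantity $P_0 := d_B(e_B,z) + n$ is bounded above by the value $P$ appearing in the statement of the theorem (namely $2n$ or $n+\CLF_B(n)$ respectively).

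Next I would feed this very same $\gamma$ into Lemma \ref{lem:wreath conj bound depending on B part}, which applies because $b$ and $c$ are of infinite order. That lemma delivers
$$d_\Gamma(1,\gamma) \leq (n+1)\, P_0 \bigl( 2\delta_{\langle b \rangle}^B(P_0) + 1 \bigr).$$
Since the distortion function $\delta_{\langle b \rangle}^B$ is monotone non-decreasing in its argument (immediate from the definition) and $P_0 \leq P$, the right-hand side is bounded above by $(n+1)P(2\delta_{\langle b \rangle}^B(P) + 1)$, which is exactly the bound stated.

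There is no genuine obstacle. The only subtlety worth flagging is that Lemma \ref{lem:wreath conj bound depending on B part} is a statement about an \emph{arbitrary} conjugator $(h,z)$ rather than one constructed in a particular way; it is this generality that lets us compose it with the specific $\gamma$ produced by Lemma \ref{lem:wreath conjugator with short B part}. The real mathematical content sits inside Lemma \ref{lem:wreath conj bound depending on B part}, where the formula of Lemma \ref{lem:coset reps} is used to decompose $\Supp{h}$ across cosets of $\langle b \rangle$ and to control the travelling-salesman length $K(\Supp{h},z)$ via Lemma \ref{lem:wreath metric}.
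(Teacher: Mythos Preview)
Your proposal is correct and matches the paper's proof essentially line for line: invoke Lemma \ref{lem:wreath conjugator with short B part} to get a conjugator $(h,z)$ with $d_B(e_B,z)$ controlled, then feed that conjugator into Lemma \ref{lem:wreath conj bound depending on B part}. Your explicit remark that $P_0 \le P$ together with monotonicity of $\delta_{\langle b\rangle}^B$ is a small clarification the paper leaves implicit.
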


\begin{proof}
By Lemma \ref{lem:wreath conjugator with short B part} we can find a conjugator  $\gamma = (h,z)$ which satisfies the inequality $d_B(e_B,z) \leq \CLF_B(n)$ if $(f,b)$ is conjugate to $(1,b)$ or $d_B(e_B,z) \leq n$ otherwise. Therefore if we set $P=n+\CLF_B(n)$ if $(f,b)$ is conjugate to $(1,b)$ and $P=2n$ otherwise then the result follows immediately by applying Lemma \ref{lem:wreath conj bound depending on B part} to the conjugator $\gamma$ obtained from Lemma \ref{lem:wreath conjugator with short B part}.
\end{proof}

When we look at elements whose $B$--components may be of finite order, we can still obtain some information on the conjugator length. Theorem \ref{thm:wreath conj length} does the work when we look at elements in $A \wr B$ such that the $B$--components are of infinite order. However, if they have finite order we need to understand the size of the conjugators $\alpha_i$ as in Proposition \ref{prop:Matthews-conjugacy in wreath products} and Lemma \ref{lem:coset reps}. When the order of $b$ is finite, the construction of the function $h$ by Matthews in Proposition \ref{prop:Matthews-conjugacy in wreath products} will work for any conjugator $\alpha_{t_i}$ between $\pi_{t_i}^{(z)}(g)$ and $\pi_{t_i}(f)$. Then, since 
				$$|\pi_{t_i}^{(z)}(g)| + \modulus{\pi_{t_i}(f)} \leq \modulus{g} + \modulus{f} \leq n$$
where $n=d_\Gamma(1,u) + d_\Gamma(1,v)$, for each coset representative $t_i$ and each $b^k \in \langle b \rangle$ we have
				$$d_B(e_B,h(b^kt_i)) \leq \modulus{f} + \modulus{g} + \textrm{CLF}_A(n)\leq n + \textrm{CLF}_A(n)\textrm{.}$$
With the aid of the conjugacy length function for $A$ we can therefore give the following:

\begin{lemma}\label{lem:wreath torsion bound depending on B part}
Suppose $u=(f,b),v=(g,c)$ are conjugate elements in $\Gamma=A \wr B$ and let $n = d_\Gamma(1,u)+d_\Gamma(1,v)$. Suppose also that $b$ and $c$ are of finite order $N$. If $\gamma = (h,z)$ is a conjugator for $u$ and $v$ in $\Gamma$ then
				$$d_\Gamma(1,\gamma) \leq  P(N+1)(2n+\CLF_A(n)+1)$$
where $P = d_B(e_B,z)+n$.
\end{lemma}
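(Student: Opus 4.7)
The plan is to follow the template of Lemma \ref{lem:wreath conj bound depending on B part}, adapted to the finite-order setting: here the role of the distortion $\delta_{\langle b\rangle}^B$ is taken by the order $N$ of $b$, and $\CLF_A$ enters through the freedom in choosing the elements $\alpha_{t_i}$ of Lemma \ref{lem:coset reps}. First I would apply Lemma \ref{lem:coset reps} to fix coset representatives $\{t_i\}$ for $\langle b\rangle$ in $B$ so that $h$ has the stated form with each $\alpha_{t_i}$ satisfying $\pi_{t_i}(f)\alpha_{t_i}=\alpha_{t_i}\pi_{t_i}^{(z)}(g)$. Since the target bound depends only on $z$, the final clause of Lemma \ref{lem:coset reps} lets me replace $(h,z)$ by a new conjugator $(h',z)$ with the same $B$-part in which each $\alpha_{t_i}$ is chosen to be a shortest conjugator in $A$ between $\pi_{t_i}(f)$ and $\pi_{t_i}^{(z)}(g)$, or the identity when both are trivial; since $|\pi_{t_i}(f)|\le|f|$ and $|\pi_{t_i}^{(z)}(g)|\le|g|$ and $|f|+|g|\le n$, this gives $|\alpha_{t_i}|\le\CLF_A(n)$.

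Next I would estimate $d_\Gamma(1,(h',z))=K(\Supp{h'},z)+|h'|$ using Lemma \ref{lem:wreath metric}. For $|h'|$: the formula for $h'(b^kt_i)$ gives $|h'(b^kt_i)|\le|f|+|\alpha_{t_i}|+|g|\le n+\CLF_A(n)$; with $\alpha_{t_i}=e_A$ whenever both $\pi$'s are trivial, $\Supp{h'}$ only meets those cosets of $\langle b\rangle$ intersecting $C:=\Supp{f}\cup z\Supp{g}$, and the number of such cosets is at most $|\Supp{f}|+|\Supp{g}|\le n$, each contributing at most $N$ support points, so $|h'|\le Nn(n+\CLF_A(n))$. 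For $K(\Supp{h'},z)$: as in Lemma \ref{lem:wreath conj bound depending on B part}, I would piece together a path $q_0p_1q_1\cdots p_sq_s$ from $e_B$ to $z$ through $\Supp{h'}$, with each $p_i$ running through one coset and each $q_i$ a geodesic connector. Each $p_i$ visits at most $N$ points of $\langle b\rangle t_i$ by successive $b$-multiplications, so $|p_i|\le(N-1)|b|\le(N-1)n$; each $q_i$ connects two points of $C\cup\{e_B,z\}$, a set of diameter at most $K(\Supp{f},b)+d_B(e_B,z)+K(\Supp{g},c)\le P$, which yields $K(\Supp{h'},z)\le(n+1)P+(N-1)n^2$. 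Combining these and using $n\le P$ to absorb the $n^2$ terms into multiples of $(N+1)P$ delivers the claimed bound $P(N+1)(2n+\CLF_A(n)+1)$.

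The main obstacle, in contrast to the infinite-order case, is that the $\alpha_{t_i}$'s are genuinely present in the finite-order formula, so extracting $\CLF_A(n)$ from the bound requires exploiting the flexibility in Lemma \ref{lem:coset reps} to replace $h$ by an $h'$ built from short $\alpha_{t_i}$'s; once this swap is performed the geometric estimates parallel those of Lemma \ref{lem:wreath conj bound depending on B part}, with the order $N$ of $\langle b\rangle$ playing the role of the distortion function.
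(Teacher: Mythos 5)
Your proposal follows essentially the same route as the paper: replace the given conjugator by one with the same $B$--part whose $\alpha_{t_i}$ are shortest conjugators in $A$ (giving $\modulus{\alpha_{t_i}}\leq\CLF_A(n)$), bound $\modulus{h'}$ by $nN(n+\CLF_A(n))$, and rerun the path construction of Lemma \ref{lem:wreath conj bound depending on B part} with the order $N$ replacing the distortion function; your arithmetic at the end also checks out, and your explicit remark that the bound is really for a modified conjugator $(h',z)$ rather than the arbitrary given $(h,z)$ is a point the paper leaves implicit. The one imprecision is your estimate $\modulus{p_i}\leq (N-1)\modulus{b}_B$: since the word metric is left-invariant, $d_B(b^jt_i,b^{j+1}t_i)=\modulus{t_i^{-1}bt_i}_B$, which need not equal $\modulus{b}_B$; the correct bound $d_B(b^jt_i,b^{j+1}t_i)\leq n$ comes from the triangle inequality through a point of $\Supp{f}$ or $z\Supp{g}$ in that coset, exactly as in the proof of Lemma \ref{lem:wreath conj bound depending on B part}, which you cite and which applies because $\Supp{h'}$ only meets cosets intersecting $C$.
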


\begin{proof}
For the most part this proof is the same as for Lemma \ref{lem:wreath conj bound depending on B part}. It will differ in two places. As mentioned above, we obtain 
				$$d_B(e_B,h(b^kt_i)) \leq \modulus{f} + \modulus{g} + \textrm{CLF}_A(n)$$
for each coset representative $t_i$ and $b^k \in \langle b \rangle$. By a similar process as that in Lemma \ref{lem:wreath conj bound depending on B part} we deduce the upper bound
				$$\modulus{h} \leq nN(n+\CLF_A(n))\textrm{.}$$

The second place where we need to modify the proof is in the calculation of an upper bound for the length of each path $p_i$. Since $b$ is of finite order, each coset will give a loop in $\Cay(B,X)$. We will let $p_i$ run around this loop, so its length will be bounded above by $Nd_B(t_i, bt_i)$. As before we get $d_B(t_i,bt_i) \leq n$, so in the upper bound obtained for $K(\Supp{h},z)$ we need only replace the distortion function $\delta_{\langle b \rangle}^B$ by the order $N$ of $b$ in $B$. Thus
				$$K(\Supp{h},z) \leq P(N+1)(n+1)$$
where $P=d_B(e_B,z)+n$. Combining this with the upper bound above for $\modulus{h}$ we get
\begin{eqnarray*}
d_\Gamma(1,\gamma) &\leq& P(N+1)(n+1) + nN(n+\CLF_A(n)) \\
	&\leq &  P(N+1)(2n+\CLF_A(n)+1)
\end{eqnarray*}
proving the Lemma.
\end{proof}

We finish this section by applying Lemma \ref{lem:wreath conjugator with short B part} and Lemma \ref{lem:wreath torsion bound depending on B part} to give the complete picture for the length of short conjugators in the wreath product $A \wr B$.

\begin{thm}\label{thm:wreath torsion conj length}
Suppose $A$ and $B$ are finitely generated groups. Let $u=(f,b),v=(g,c) \in \Gamma$ where the order of $b$ and $c$ is $N \in \N \cup \{\infty\}$. Then $u,v$ are conjugate if and only if there exists a conjugator $\gamma \in \Gamma$ such that either
\begin{equation*}
\begin{array}{ll}d_\Gamma (1,\gamma) \leq  P(N+1)(2n+\CLF_A(n)+1) & \textrm{if $N$ is finite; or}\\
d_\Gamma (1,\gamma) \leq (n+1)P(2\delta_{\langle b \rangle}^B(P)+1) & \textrm{if $N=\infty$,}\end{array}
\end{equation*}
where  $n = d_\Gamma(1,u)+d_\Gamma(1,v)$ and $P=2n$ if $(f,b)$ is conjugate to $(1,b)$ or $P=n+\CLF_B(n)$ otherwise.
\end{thm}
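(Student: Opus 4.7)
The plan is to assemble this theorem directly from the three preceding lemmas, since most of the real work has already been done. First, I would apply Lemma \ref{lem:wreath conjugator with short B part} to the pair $(u,v)$. This produces a conjugator $\gamma = (h,z)$ for which $d_B(e_B,z) \leq n$ when $(f,b)$ is not conjugate to $(1,b)$, and $d_B(e_B,z) \leq \CLF_B(n)$ otherwise. Crucially, this lemma makes no hypothesis on the order of $b$, so it supplies a short $z$ in both the finite and infinite order regimes. With $P := d_B(e_B,z)+n$, we then have $P \leq 2n$ or $P \leq n + \CLF_B(n)$ according to which of the two cases we are in.

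Next, I would feed this specific conjugator $\gamma = (h,z)$ into the "length-given-$z$" bounds. When $N = \infty$, Lemma \ref{lem:wreath conj bound depending on B part} applies verbatim and gives the estimate
\[
d_\Gamma(1,\gamma) \leq (n+1)P(2\delta_{\langle b\rangle}^B(P)+1).
\]
When $N$ is finite, Lemma \ref{lem:wreath torsion bound depending on B part} applies instead and yields
\[
d_\Gamma(1,\gamma) \leq P(N+1)(2n + \CLF_A(n) + 1).
\]
In both cases the right-hand side is expressed purely in terms of $n$, $N$, and $P$, with no residual dependence on $h$, so substituting the upper bounds for $P$ from the previous paragraph completes the proof.

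There is no real obstacle: the theorem is essentially a tidy reformatting of the content of Lemmas \ref{lem:wreath conjugator with short B part}, \ref{lem:wreath conj bound depending on B part}, and \ref{lem:wreath torsion bound depending on B part}. The only thing to check is that the conjugator produced in the first step is the same object we plug into the second step, which is immediate because both subsequent lemmas apply to an arbitrary conjugator $(h,z)$ of $u$ and $v$. The converse implication — that if such a $\gamma$ exists then $u$ and $v$ are conjugate — is trivial, since the existence of any conjugator is the definition of being conjugate.
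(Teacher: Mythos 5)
Your proof is correct and is exactly the argument the paper intends: Theorem \ref{thm:wreath torsion conj length} is stated without a separate proof, immediately after the remark that it follows by combining Lemma \ref{lem:wreath conjugator with short B part} with Lemma \ref{lem:wreath conj bound depending on B part} (infinite order) and Lemma \ref{lem:wreath torsion bound depending on B part} (finite order), which is precisely your assembly. Note only that your case assignment for $P$ (namely $P\leq 2n$ when $(f,b)$ is \emph{not} conjugate to $(1,b)$, and $P\leq n+\CLF_B(n)$ otherwise) is the correct one, consistent with Lemma \ref{lem:wreath conjugator with short B part} and Theorem \ref{thm:wreath conj length}; the two cases appear to be inadvertently swapped in the statement of Theorem \ref{thm:wreath torsion conj length} itself.
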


\subsection{Lower Bounds for Lengths of Short Conjugators}\label{subsec:lower bounds}

We saw in Section \ref{sec:short conj in wreath - upper bound} that the distortion of cyclic subgroups plays an important role in the upper bound we determined for the conjugacy length function. We will make use of the distortion to determine a lower bound as well. Firstly, however, we give a straightforward lower bound. In the following, let $\modulus{.}_B$ denote word length in the finitely generated group $B$ and $\modulus{.}$ without the subscript denote word length in $A \wr B$.

\begin{prop}\label{prop:CLF_B lower bound for wreaths}
Let $A$ and $B$ be finitely generated groups. Then
		$$\CLF_{A \wr B}(n) \geq \CLF_B(n)\textrm{.}$$
\end{prop}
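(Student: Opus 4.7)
The plan is to exhibit, for each $n$, a pair of conjugate elements in $A\wr B$ whose word-length sum is at most $n$ but which force any conjugator to have length at least $\CLF_B(n)$. The natural candidates are lifts of the extremal pair in $B$: choose $b,c \in B$, conjugate, with $\modulus{b}_B + \modulus{c}_B \leq n$ and every conjugator in $B$ of length at least $\CLF_B(n)$ (such a pair exists by the definition of $\CLF_B$), and consider $u = (1,b)$ and $v = (1,c)$ in $\Gamma = A \wr B$.

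First I would compute that $\modulus{u} = \modulus{b}_B$ and $\modulus{v} = \modulus{c}_B$: applying Lemma \ref{lem:wreath metric} to $u$ gives $K(\Supp{1},b) + \modulus{1} = \modulus{b}_B$, since the support is empty and one only has to travel from $e_B$ to $b$. Thus $\modulus{u} + \modulus{v} \leq n$. Next I would observe that if $(h,z) \in \Gamma$ conjugates $u$ to $v$, then a direct multiplication
\[
(1,b)(h,z) = (h^b, bz), \qquad (h,z)(1,c) = (h, zc)
\]
forces $bz = zc$ in $B$, so $z$ is a conjugator for $b$ and $c$ in $B$ and consequently $\modulus{z}_B \geq \CLF_B(n)$ by our choice of $b,c$.

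Finally I would use that the projection $\Gamma \to B$, $(h,z) \mapsto z$, is a Lipschitz retraction with respect to the chosen generating sets (each generator of $\Gamma$ either maps to a generator of $B$ or to $e_B$), hence $\modulus{(h,z)} \geq \modulus{z}_B \geq \CLF_B(n)$ for every conjugator of $u$ and $v$. Taking the minimum over such conjugators and the maximum over pairs of word-length sum at most $n$ yields $\CLF_{A \wr B}(n) \geq \CLF_B(n)$, as required.

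There is no substantial obstacle here: the argument is essentially that $B$ embeds as a retract of $A\wr B$, so its conjugacy length function can only be dominated. The only point that deserves a sentence of justification is the 1-Lipschitz nature of the projection to $B$, which is immediate from the standard generating set used throughout Section \ref{sec:wreath word length}.
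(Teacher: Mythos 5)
Your proof is correct and follows essentially the same route as the paper's: both take the extremal conjugate pair $b,c$ in $B$, lift it to $(1,b),(1,c)$ in $A\wr B$, and observe that any conjugator $(h,z)$ forces $z$ to conjugate $b$ to $c$ in $B$. Your final step via the $1$-Lipschitz retraction $(h,z)\mapsto z$ is in fact a slightly cleaner justification than the paper's remark that one ``may take $h=1$'', but it is not a different argument.
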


\begin{proof}
Let $n \in \N$. The value $\CLF_B(n)$ is defined to be the smallest integer such that whenever $b,c$ are conjugate elements in $B$ and satisfy $\modulus{b}_B+\modulus{c}_B \leq n$ then there is a conjugator $z \in B$ such that $\modulus{z}_B \leq \CLF_B(n)$. Let $b_n,c_n$ be elements which realise this minimum. That is:
\begin{enumerate}
\item $\modulus{b_n}_B + \modulus{c_n}_B \leq n$; and
\item a minimal length conjugator $z_n \in B$ satisfies $\modulus{z_n}_B = \CLF_B(n)$.
\end{enumerate}
Consider the elements $u_n=(1,b_n)$ and $v_n=(1,c_n)$ in $A \wr B$, where $1$ represents the trivial function. Then by Lemma \ref{lem:wreath metric}
		$$\modulus{u_n}+\modulus{v_n} \leq n\textrm{.}$$
Any conjugator $(h,x)$ must satisfy $h^{b_n}=h$ and $b_nx=xc_n$. We may take $h=1$ since any non-trivial function $h$ (the existence of such a conjugator is only possible when $b_n$ is of finite order) will lead to a larger conjugator. Thus a minimal length conjugator for $u_n$ and $v_n$ will have the form $(1,x)$ where $x$ can be chosen to be any conjugator for $b_n$ and $c_n$. In particular, this shows that the minimal length conjugator for $u_n$ and $v_n$ has length $\CLF_B(n)$.
\end{proof}

\subsubsection{Distorted elements}

Let $B$ be a finitely generated group containing an element $x$ of infinite order. If the centraliser of $x$ in $B$, denoted $Z_B(x)$, is sufficiently large relative to $\langle x \rangle$ (see Lemma \ref{lem:clf wreath lower bound}), then we can use the distortion of $\langle x \rangle$ in $B$ to construct two sequences of functions from $B$ to $A$ that allow us to demonstrate a lower bound on the conjugacy length function of $A \wr B$ in terms of this distortion. Given any element $b$ of infinite order in $B$, by taking $x=b^3$ we ensure that $x$ has sufficiently large centraliser in order to apply Lemma \ref{lem:clf wreath lower bound}. Since the distortion of $\langle b \rangle$ in $B$ is (roughly) a third of the distortion of $\langle x \rangle$, we can conclude that the distortion function of any cyclic subgroup in $B$ provides a lower bound for the conjugacy length function of $A \wr B$.

\begin{thm}\label{thm:clf wreath lower bound}
Let $A$ and $B$ be finitely generated groups and let $b \in B$ be any element of infinite order. Then
		$$\CLF_{A \wr B}\big(4n + 4 + 10\modulus{b}_B\big) \geq \frac{4}{3}\delta_{\langle b \rangle}^B\left(n\right)-4.$$
\end{thm}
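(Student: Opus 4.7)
Let me set $m := \delta_{\langle b\rangle}^B(n)$, so $|b^m|_B \leq n$. Following the introductory remark, I take $x := b^3$: then $x$ has infinite order, $b \in Z_B(x)$, and $b, b^2 \notin \langle x\rangle$, so $\langle x\rangle$ has three distinct cosets $\langle x\rangle,\, b\langle x\rangle,\, b^2\langle x\rangle$ inside $\langle b\rangle$ on which the Matthews products $\pi_{t_i}$ can be prescribed independently. Fix a nontrivial $a \in A$ (normalised so $|a|_A = 1$). My goal is to exhibit conjugate elements $u = (f, b^3)$ and $v = (g, b^3)$ in $A\wr B$ with $|u|+|v| \leq 4n+4+10|b|_B$ whose every conjugator $\gamma$ satisfies $|\gamma| \geq \tfrac{4}{3}m-4$; by Lemma \ref{lem:wreath metric} the bound will come purely from the size of the $A^{(B)}$-part of any conjugator.

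The functions $f$ and $g$ should be chosen with $\Supp{f}$ in a ball of radius $O(|b|_B)$ about $e_B$ and $\Supp{g}$ in a ball of the same order about $b^m$, with their values in $\{a, a^{-1}\}$ arranged in a specific asymmetric pattern across the three cosets of $\langle x\rangle$. The pattern must simultaneously (a) satisfy the Matthews matching condition $\pi_{t_i}(f) = \pi_{t_i}^{(z_0)}(g)$ from Proposition \ref{prop:Matthews-conjugacy in wreath products} for some $z_0$ close to $b^{-m}$, certifying $u \sim v$; and (b) pin down the coset of $z$ in $Z_B(x)/\langle x\rangle$, so that the admissible conjugator $B$-parts all lie in $\{z_0 b^{3\ell}: \ell \in \Z\}$. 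The bound $|u|+|v| \leq 4n+4+10|b|_B$ then follows from Lemma \ref{lem:wreath metric}, since $K(\Supp{f}, b^3)$ and $K(\Supp{g}, b^3)$ are each at most $2|b^m|_B + O(|b|_B) \leq 2n + O(|b|_B)$, and $|f|+|g|$ is absorbed into the additive constant.

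The heart of the proof is the lower bound on $|h|$ for any conjugator $\gamma = (h, z)$. By Lemma \ref{lem:coset reps}, once we fix coset representatives of $\langle x\rangle$ the function $h$ is given explicitly by the telescoping product formula. Writing $z = z_0 b^{3\ell}$ and letting the integer $\ell$ vary, I would compute, on each of the three cosets $\langle x\rangle$, $b\langle x\rangle$, $b^2\langle x\rangle$, how the cardinality of $\Supp{h}$ restricted to that coset depends on $\ell$. The design of $f$ and $g$ should guarantee that for every $\ell \in \Z$ these three sizes sum to at least $\tfrac{4}{3}m-4$: the single shift parameter $\ell$ can be chosen to align the Matthews products on at most one of the three cosets, forcing the other two cosets to contribute a combined support of roughly $\tfrac{4}{3}m$. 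Lemma \ref{lem:wreath metric} then yields $|\gamma| \geq |h| \geq \tfrac{4}{3}m-4$, completing the argument.

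The main obstacle is establishing this $\ell$-uniform lower bound. The pitfall to avoid is the naive shift $z = b^{-m}$, which in a single-coset setting causes the two products in Lemma \ref{lem:coset reps} to telescope and vanish, producing a short conjugator. To block this, the supports and values of $f$ and $g$ must be arranged so that the values of $\ell$ which would trivialise $\Supp{h}$ on the three different cosets of $\langle x\rangle$ are pairwise incompatible. The factor $\tfrac{4}{3}$ emerges because, among the two cosets left misaligned for any given $\ell$, one inherits essentially the full length-$m$ discrepancy between $e_B$ and $b^m$ while the other inherits approximately $m/3$ of it, reflecting the index-$3$ structure of $\langle x\rangle$ inside $\langle b\rangle$.
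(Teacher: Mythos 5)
Your strategy is in the right family---build $u=(f,b^3)$, $v=(g,b^3)$ with supports near $e_B$ and near $b^{\pm m}$, arranged so that the ``trivialising'' shifts on different cosets of $\langle b^3\rangle$ are mutually incompatible---and you correctly identify the pitfall (the naive shift $z=b^{-m}$ telescopes $h$ away). But there are two genuine gaps. The first is that the construction is never given: you list the properties $f$ and $g$ ``should'' have and flag the $\ell$-uniform lower bound as the main obstacle, but exhibiting a pattern that simultaneously satisfies the Matthews matching condition, pins down the coset of $z$, and forces a large support for every admissible $z$ \emph{is} the proof. With three cosets the coset bookkeeping is also subtler than you indicate: a priori $z$ could induce a $3$-cycle on $\{\langle x\rangle,\langle x\rangle b,\langle x\rangle b^2\}$, and ruling this out needs an argument (the paper sidesteps this by using only \emph{two} cosets, $\langle x\rangle$ and $\langle x\rangle y$ with $y=b$, where $y^2\notin\langle x\rangle$ kills the transposition).

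The second gap is quantitative: your accounting for the factor $\tfrac43$ does not work if, as you state, the bound is to come ``purely from the size of the $A^{(B)}$-part.'' By Lemma \ref{lem:coset reps}, on each coset $\langle b^3\rangle b^j$ the support of $h$ is an interval \emph{in powers of $b^3$} between a point of $\Supp{f}$ and a point of $z\Supp{g}$; a discrepancy of $D$ powers of $b$ therefore contributes only about $D/3$ elements to $\Supp{h}$. So no coset can ``inherit the full length-$m$ discrepancy'' as $m$ support points, and with supports confined near $e_B$ and $b^m$ the best one can force over all shifts is roughly $\tfrac23 m$ elements, not $\tfrac43 m$. (Forcing $\tfrac43 m$ elements from $\modulus{h}$ alone requires $\Supp{g}$ to spread over about $4m$ powers of $b$, which drives $\modulus{u}+\modulus{v}$ up to roughly $8n$ and blows the $4n+4+10\modulus{b}_B$ budget.) The paper recovers the missing factor of $2$ from the \emph{other} term in Lemma \ref{lem:wreath metric}: with $\Supp{f_n}=\{e_B,y\}$ and $\Supp{g_n}=\{x^{-\delta(n)},x^{\delta(n)}y\}$ stretched in opposite directions on the two cosets, every conjugator satisfies $\modulus{\Supp{h}}\geq 2\delta_{\langle x\rangle}^B(n)$, whence $\modulus{\gamma}=K(\Supp{h},z)+\modulus{h}\geq 4\delta_{\langle x\rangle}^B(n)\geq\tfrac43\delta_{\langle b\rangle}^B(n)-4$. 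You need either to use both terms of the metric formula as the paper does, or to redesign the supports and accept worse constants than the theorem states.
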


In order to prove Theorem \ref{thm:clf wreath lower bound} we will use the following closely-related Lemma:

\begin{lemma}\label{lem:clf wreath lower bound}
If the set $\{y \in Z_B(x) : y^2 \notin \langle x \rangle \}$ is non-empty then
		$$\CLF_{A \wr B}\big(4(n + L_x + 1) + 2\modulus{x}_B\big) \geq 4\delta_{\langle x \rangle}^B\left(n\right)$$
where $L_x=\min \{ \modulus{y}_B : y \in Z_B(x), \ y^2 \notin \langle x \rangle \}$.
\end{lemma}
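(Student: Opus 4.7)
The strategy is to exhibit a pair of conjugate elements $u,v\in\Gamma=A\wr B$ with $|u|+|v|\le 4(n+L_x+1)+2|x|_B$ such that every conjugator in $\Gamma$ between them has length at least $4\delta$, where $\delta:=\delta_{\langle x\rangle}^B(n)$. Fix a non-trivial $a\in A$ and $y\in Z_B(x)$ realising $L_x=|y|_B$. The hypothesis $y^2\notin\langle x\rangle$ is essential because it forces the right cosets $\langle x\rangle$ and $y\langle x\rangle$ of $\langle x\rangle$ in $B$ to be distinct, and, more importantly, it rules out cross-coset matches of the products appearing in Matthews' criterion.

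Concretely I would take $u=(f,x)$ and $v=(g,x)$, with $\Supp{f}=\{e_B,y\}$ and $\Supp{g}=\{x^\delta,\,x^{-\delta}y\}$, and all non-zero values of $f$ and $g$ equal to $a$. Using Lemma \ref{lem:wreath metric} together with $|x^\delta|_B\le n$, $|y|_B=L_x$, and the triangle inequality in the Cayley graph of $B$, direct path-length estimates give $|u|\le 2L_x+|x|_B+2$ and $|v|\le 4n+2L_x+|x|_B+2$, so $|u|+|v|\le 4(n+L_x+1)+2|x|_B$ as required.

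To see that $u\sim v$ in $\Gamma$, I would apply Proposition \ref{prop:Matthews-conjugacy in wreath products} with right-coset representatives $\{t_i\}$ of $\langle x\rangle$ in $B$ containing $e_B$ and $y$. A case analysis (using that $x$ and $y$ commute and $y^2\notin\langle x\rangle$) shows that if $z\in B$ satisfies $\pi_{t_i}^{(z)}(g)=\pi_{t_i}(f)$ for every $i$, then necessarily $z\in\langle x\rangle$; write $z=x^k$ for some $k\in\Z$. Lemma \ref{lem:coset reps} then supplies an explicit cumulative-product formula for the function part $h$ of any such conjugator: the support of $h$ on $\langle x\rangle$ has exactly $|k+\delta|$ points and on $y\langle x\rangle$ exactly $|k-\delta|$ points, each carrying a non-trivial value in $\langle a\rangle$.

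Each non-zero value of $h$ contributes at least $|a|_A\ge 1$ to the $A^{(B)}$-norm, so $|h|\ge|k+\delta|+|k-\delta|\ge 2\delta$ by the triangle inequality. Moreover, by Lemma \ref{lem:wreath metric} we have $|(h,z)|_\Gamma=K(\Supp{h},z)+|h|$, and any walk in the Cayley graph of $B$ from $e_B$ to $z$ that visits all of $\Supp{h}$ must use at least as many edges as there are distinct vertices to visit, giving $K(\Supp{h},z)\ge 2\delta$ as well. Combining these two inequalities yields $|(h,z)|_\Gamma\ge 4\delta$ for every conjugator, as required. The main obstacle in the argument is the reduction to $z\in\langle x\rangle$: this is precisely where the hypothesis $y^2\notin\langle x\rangle$ does its essential work, because for any $z$ lying in a coset of $\langle x\rangle$ other than $\langle x\rangle$ itself one can check that at least one of the equalities $\pi_{t_i}^{(z)}(g)=\pi_{t_i}(f)$ must fail.
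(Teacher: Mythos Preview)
Your construction and argument match the paper's almost exactly: the same pair $u=(f,x)$, $v=(g,x)$ with $\Supp f=\{e_B,y\}$, the same reduction to $z\in\langle x\rangle$ via the hypothesis $y^2\notin\langle x\rangle$, and the same cumulative-product count giving $|\Supp h|=|k+\delta|+|k-\delta|\ge 2\delta$.

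The one imprecision is the step ``a walk visiting all of $\Supp h$ must use at least as many edges as vertices to visit'', i.e.\ $K(\Supp h,z)\ge|\Supp h|$. In general a walk through $N$ distinct vertices needs only $N-1$ edges, so one really has $K(\Supp h,z)\ge |\Supp h\cup\{e_B,z\}|-1$. With your placement $\Supp g=\{x^{\delta},x^{-\delta}y\}$ both $e_B$ and $z=x^k$ lie in $\Supp h$ whenever $0<k<\delta$, and then the count only yields $K\ge 2\delta-1$, hence $|(h,z)|\ge 4\delta-1$. The paper uses the mirror placement $\Supp g=\{x^{-\delta},x^{\delta}y\}$; with that choice one checks that $e_B\notin\Supp h$ for every $k$, so $|\Supp h\cup\{e_B,z\}|\ge 2\delta+1$ and the desired $K\ge 2\delta$ follows cleanly. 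Swapping your $\Supp g$ for the mirrored one (or just absorbing the $-1$) fixes the argument.
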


\begin{proof}[Proof of Theorem \ref{thm:clf wreath lower bound}]
To obtain the Theorem we need to apply Lemma \ref{lem:clf wreath lower bound}, taking $x=b^3$. Then $b \in Z_B(x)$ and $b^2 \notin \langle x \rangle$, hence $L_x \leq \modulus{b}$. The distortion function for $b$ satisfies $\delta_{\langle b \rangle}^B(n) \geq \frac{1}{3}\delta_{\langle x \rangle}^B(n)-1$. Hence the Theorem follows by application of Lemma \ref{lem:clf wreath lower bound}.
\end{proof}

\begin{proof}[Proof of Lemma \ref{lem:clf wreath lower bound}]
Take an element $y$ in $Z_B(x)$ which realises this minimum. Let $a$ be any element in the chosen generating set of $A$ and consider two functions $f_n,g_n:B\rightarrow A$ which take values of either $e_A$ or $a$ and which have the following supports:
$$\Supp{f_n}=\{e_B , y \}$$
$$\Supp{g_n}=\{x^{-\delta(n)}, x^{\delta(n)}y\}$$
where $\delta(n)=\delta_{\langle x \rangle}^B(n)$.
We use these functions to define a pair of conjugate elements: $u_n=(f_n,x),v_n=(g_n,x)$. First we will show that the sum of the sizes of these elements grows with $n$. Observe that
$$\modulus{u_n}+\modulus{v_n} = K(\Supp{f_n},x)+2+K(\Supp{g_n},x)+2$$
where the notation is as in Lemma \ref{lem:wreath metric}. For every $n$, 
\begin{align*}
& K(\Supp{f_n},x)  \leq  2\modulus{y}_B + \modulus{x}_B \\
n \leq & K(\Supp{g_n},x)  \leq 2\modulus{y}_B + 4n +\modulus{x}_B \textrm{.}
\end{align*}
Hence $n \leq \modulus{u_n}+\modulus{v_n} \leq 4n + 4L_x + 2\modulus{x}_B + 4$. As an example of a conjugator we may take $\gamma_n=(h_n,e_B)$, where $h_n$ is given by
\begin{align*}
h_n(x^iy)=a \ &\ \textrm{ if } \ 0\leq i \leq \delta(n)-1;\\
h_n(x^{-i})=a^{-1} \ &\ \textrm{ if } \ 1 \leq i \leq \delta(n);\\
h_n(b)=e_A \ &\ \textrm{ otherwise.}
\end{align*}
We will now verify that this is indeed a conjugator. To do so, we need to verify that $f_nh_n^x = h_ng_n$. We need only check this holds for elements of the form $x^i y$ or $x^{-i}$ for $0\leq i \leq \delta(n)$ since otherwise both sides evaluate to the identity. The reader can verify that
		$$f_n(x^iy)h_n(x^{i-1}y)=a=h_n(x^iy)g_n(x^iy)$$
whenever $0\leq i \leq \delta(n)$. Provided $1\leq i \leq \delta(n)-1$ we get
		$$f_n(x^{-i})h_n(x^{-i-1})=a^{-1}=h_n(x^{-i})g_n(x^{-i})$$
and in the last two cases, that is for $i \in \{0,\delta(n)\}$, both sides equal the identity.

\begin{figure}\vspace{2mm}
\labellist \small \hair 4pt
		\pinlabel $\langle x\rangle$ [l] at 447 78
\tiny		\pinlabel $e_B$ [b] at 245 147
\hair 8pt		\pinlabel $y$ [t] at 234 88
		\pinlabel $x^{-1}$ [t] at 235 145
		\pinlabel $x^{-\delta(n)}$ [t] at 31 73
		\pinlabel $x^{\delta(n)-1}y$ [l] at 392 16
		\pinlabel $x^{\delta(n)}y$ [l] at 402 3
	\endlabellist
\centering\includegraphics[width=9cm]{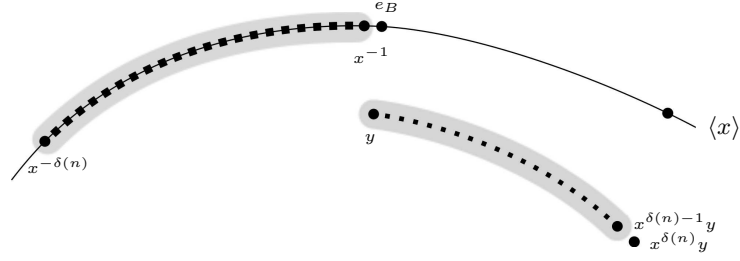}
\caption[The support of $h_n$]{The support of $h_n$ is the union of the two shaded regions.}
\end{figure}

Now we will show that any conjugator for $u_n$ and $v_n$ will have to have size bounded below by $4\delta(n)$. This is done by showing the support of the associated function will contain at least $2\delta(n)$ elements. We will first give the family of elements $z \in B$ for which there exists a conjugator for $u_n$ and $v_n$ of the form $(h,z)$ for some function $h$. Suppose $z$ is some such element. Lemma \ref{lem:coset reps} then tells us what the corresponding function $h$ will look like. In particular, in order for the support of $h$ to be finite, we must have that 
	$$\Supp{f_n} \cap \langle x \rangle t \neq \emptyset\ \ \textrm{ if and only if }\ \ z\Supp{g_n} \cap \langle x \rangle t \neq \emptyset$$
for any $t \in B$ (note that this does not apply in general, but it does here because the functions have been designed so their supports intersect each coset with at most one element). Hence $z\Supp{g_n}$ will intersect the cosets $\langle x \rangle y^i$ exactly once for each $i = 0,1$ and it will not intersect any other coset. Let $\sigma $ be the permutation of $\{0,1\}$ such that $zx^{\delta(n)}y \in \langle x \rangle y^{\sigma(1)}$ and $zx^{-\delta(n)} \in \langle x \rangle y^{\sigma(0)}$. Since $y$ is in the centraliser of $x$ it follows that $z \in \langle x \rangle y^{\sigma(i)-i}$ for each $i$. If $\sigma(i)\neq i$ then this implies that $z \in \langle x \rangle y^{-1} \cap \langle x \rangle y$, so $y^2 \in \langle x \rangle$, contradicting our choice of $y$. Hence $\sigma(i)=i$ for $i \in \{0,1\}$, and  thus $z \in \langle x \rangle$.

The support of $g_n$ was chosen in such a way that it is sufficiently spread out in the two cosets of $\langle x \rangle$. It means that shifting $\Supp{g_n}$ by any power of $x$ does not prevent the support of $h$ needing at least $2\delta(n)$ elements. In particular, if $z=x^k$ for some $-\delta(n)<k< \delta(n)$ then the support of $h$ will consist of elements $x^iy$ for $0\leq i <\delta(n)+k$ and $x^{-i}$ for $1\leq i \leq \delta(n)-k$. Here the support has precisely $2\delta(n)$ elements. If $k$ lies outside this range then the support will contain at least as many as $2\delta(n)$ elements, for example if $k\geq \delta(n)$ then the support will consist of elements $x^iy$ for $0\leq i <\delta(n)+k$ as well as $x^i$ for any $i$ satisfying $0\leq i <-\delta(n)+k$. This implies that, by Lemma \ref{lem:wreath metric}, any conjugator for $u_n$ and $v_n$ will have to have size at least $4\delta(n)$, providing the required lower bound for the conjugacy length function.
\end{proof}

In \cite{SaleThesis} the author shows that groups of the form $A\wr \BS{q}$, where $\BS{q}$ is a Baumslag-Solitar group, have an exponential conjugacy length function. The lower bound is obtained by using the methods in the proof of Theorem \ref{thm:clf wreath lower bound}.

Osin \cite{Osin01} has described the distortion functions of subgroups of finitely generated nilpotent groups. In particular, for a $c$--step nilpotent group $N$ his result implies that the maximal distortion of a cyclic subgroup of $N$ will be $n^c$, and this occurs when the subgroup is contained in the centre of $N$. A consequence of Theorem \ref{thm:wreath conj length}, Theorem \ref{thm:clf wreath lower bound} and Osin's work is that when restricting to elements in $A \wr N$ not conjugate to an element of the form $(1,b)$, the (restricted) conjugacy length function will be $n^\alpha$, where $\alpha \in [c,c+2]$. However, since we do not yet know the conjugacy length function of a general $c$--step nilpotent group, apart from this lower bound we cannot estimate the conjugacy length function of $A \wr N$. In the particular case when $N$ is $2$--step nilpotent we know its conjugacy length function is quadratic by Ji, Ogle and Ramsey \cite{JOR10}. Hence the conjugacy length function of $A\wr N$, when $N$ is torsion-free, is $n^\alpha$ for some $\alpha \in [2,7]$.

\subsubsection{Using the area of triangles}

We will show that the conjugacy length function of wreath products $A \wr B$, where $A$ is finitely generated and $B$ contains a copy of $\Z^2$, are non-linear, and in particular are at least quadratic. Combined with Theorem \ref{thm:wreath conj length} we learn, for example, that $\CLF_{A \wr \Z^r}(n) \asymp n^\alpha$ for some $\alpha \in [2,3]$ whenever $r \geq 2$. The methods used here differ to those used above in that we do not use subgroup distortion. Instead we rely on the area of triangles in $\Z^2$ being quadratic with respect to the perimeter length.

\begin{figure}[b!]
\labellist \small \hair 5pt
	\pinlabel $y^k\Supp{g_n}$ [r] at 0 0
	\pinlabel $\Supp{f_n}$ [r] at 0 444
	\pinlabel $n$ [b] at 352 460
	\pinlabel $n$ [r] at 240 332
\endlabellist
\centering\includegraphics[width=6cm]{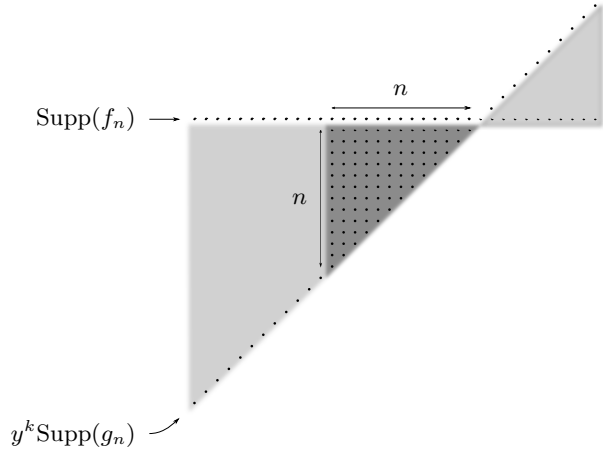}
\caption{The shaded region indicates $\Supp{h}$, while the dark shaded region is a triangle contained in $\Supp{h}$ which contains $\frac{1}{2}n(n+1)$ elements.}
\end{figure}

Suppose $x$ and $y$ generate a copy of $\Z^2$ in $B$. For each $n \in \N$ let $f_n,g_n:B \to A$ be functions which take values of either $e_A$ or $a$, where $a$ is an element of a generating set for $A$, and have supports given by
		$$\Supp{f_n}=\{x^{-n},\ldots , x^{-1} , e_B , x , \ldots , x^n\},$$
		$$\Supp{g_n}=\{x^{-n}y^{-n} , \ldots , x^{-1}y^{-1},e_B,xy,\ldots x^ny^n\}.$$
Consider the two elements $u_n=(f_n,y)$ and $v_n=(g_n,y)$. These are conjugate via the element $(h_n,e_B)$, where $h_n$ is defined by
\begin{align*}
h_n(x^iy^j)=a \ & \ \textrm{ if } \ 0<i\leq n \ \textrm{ and } \ 0\leq j <i;\\
h_n(x^iy^j)=a^{-1} \ &\ \textrm{ if } \ -n \leq i < 0 \ \textrm{ and } \ i \leq j < i;\\
h_n(z)=e_A \ &\ \textrm{ otherwise.}
\end{align*}
It is clear that the sizes of $u_n$ and $v_n$ grow linearly with $n$. In particular, using Lemma \ref{lem:wreath metric}, one can verify that
		$$4n+2 \leq \modulus{u_n}\leq 4n\modulus{x}_B+\modulus{y}_B+2n+1$$
		$$4n+2 \leq \modulus{v_n}\leq 4n\modulus{xy}_B+\modulus{y}_B+2n+1.$$
Suppose that $(h,z)$ is any conjugator for $u_n$ and $v_n$. First we claim that $z$ must be a power of $y$. This follows from a similar argument as in the proof of Theorem \ref{thm:clf wreath lower bound}. By Lemma \ref{lem:coset reps}, given $z$ we can construct the function $h$. The support of $h$ will be finite only if for any $t \in B$
\begin{equation}\label{eq:suport of h}
\Supp{f_n} \cap \langle y \rangle t \neq \emptyset\ \ \textrm{ if and only if }\ \ z\Supp{g_n} \cap \langle y \rangle t \neq \emptyset.
\end{equation}
First observe that $z \in \langle x,y \rangle = \Z^2$ since otherwise we would have $\Supp{f_n}\cap \langle y \rangle z =\emptyset$ while on the other hand $z \in z\Supp{g_n}\cap \langle y \rangle z$. By the nature of $\Z^2$ we can see that if $z$ acts on it by translation in any direction except those parallel to $y$ then (\ref{eq:suport of h}) cannot hold, implying $h$ will have infinite support. Thus any conjugator for $u_n$ and $v_n$ must be of the form $(h,y^k)$ for some integer $k$.

A simple geometric argument now gives us a quadratic lower bound on the size of $(h,y^k)$ relative to $n$. The support of $h$ will be contained in the cosets $\langle y \rangle x^i$ for $-n \leq i \leq n$. Within each coset it will include precisely one of $x^i$ or $x^ry^{i+k}$, as well as all elements in between. Specifically, $x^iy^j \in \Supp{h}$ if and only if either $0\leq j < i+k$ or $i+k \leq j < 0$. A triangle of elements is therefore contained in the support of $h$. Regardless of what value of $k$ is chosen this triangle can be chosen to have at least $n$ elements along the base and side, giving a minimum of $\frac{1}{2}n(n+1)$ elements in $\Supp{h}$. Thus any conjugator must have size bounded below by $n(n+1)$.

\begin{thm}\label{thm:Z^r wr Z^r lower bound on clf}
Let $A$ and $B$ be finitely generated groups and suppose that $B$ contains a copy of $\Z^2$ generated by elements $x$ and $y$. The conjugacy length function of $A \wr B$ satisfies
		$$\CLF_{A\wr B}\big(4n(\modulus{x}+\modulus{xy}+1)+2\modulus{y}+2\big)\geq n^2+n.$$
When $B=\Z^r$ for some $r\geq 2$ we get the following upper and lower bounds:
		$$\frac{(n+14)(n-2)}{256}\leq \CLF_{A \wr \Z^r}(n)\leq 7n(n+1)(14n+1).$$
\end{thm}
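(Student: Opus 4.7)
The first inequality is essentially established in the paragraph preceding the statement. I would simply codify that discussion: take the conjugate elements $u_n=(f_n,y)$ and $v_n=(g_n,y)$ constructed there, bound $|u_n|+|v_n|$ using Lemma \ref{lem:wreath metric} (summing the two displayed inequalities gives precisely $4n(|x|_B+|xy|_B+1)+2|y|_B+2$), and verify the lower bound on every conjugator.

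The crux of the conjugator bound proceeds as in the proof of Theorem \ref{thm:clf wreath lower bound}. Given any conjugator $(h,z)$ for $u_n$ and $v_n$, Lemma \ref{lem:coset reps} yields an explicit description of $h$ with respect to some family of coset representatives of $\langle y\rangle$ in $B$. Since $\Supp{f_n}$ and $z\Supp{g_n}$ each meet any coset of $\langle y\rangle$ in at most one point and $\langle x,y\rangle\cong\Z^2$, the only way the formula from Lemma \ref{lem:coset reps} can produce a finitely supported $h$ is for $z$ to lie in $\langle y\rangle$. Writing $z=y^k$, the same explicit formula forces $\Supp{h}$ to contain, for each $-n\leq i\leq n$, a complete interval of at least $|i|$ consecutive points inside the coset $\langle y\rangle x^i$; geometrically this traces out a right-angled triangle in $\Z^2\subseteq B$ whose legs have length at least $n$, irrespective of $k$. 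Thus $|\Supp{h}|\geq \tfrac12 n(n+1)$. Combining Lemma \ref{lem:wreath metric} with the trivial estimates $|h|\geq|\Supp{h}|$ (each nontrivial value of $h$ contributes at least $1$) and $K(\Supp{h},z)\geq|\Supp{h}|$ (any path through $|\Supp{h}|$ distinct points has length at least $|\Supp{h}|$) then gives $|(h,z)|\geq n(n+1)$, which is the claimed bound.

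For the second part I specialise the first. For the lower bound I take $x=e_1$ and $y=e_2$ in the standard generating set of $\Z^r$, so that $|x|_B=|y|_B=1$ and $|xy|_B=2$; substituting into the first inequality yields $\CLF_{A\wr\Z^r}(16n+4)\geq n^2+n$, and monotonicity of $\CLF$ together with elementary algebra re-parameterises this as the stated bound $(n+14)(n-2)/256$. For the upper bound I apply Theorem \ref{thm:wreath conj length}: since $\Z^r$ is abelian, $\CLF_{\Z^r}\equiv 0$, and since every cyclic subgroup of $\Z^r$ is undistorted, $\delta_{\langle b\rangle}^{\Z^r}(n)\leq n$. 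Feeding these into the bound of Theorem \ref{thm:wreath conj length} and absorbing the constants gives $7n(n+1)(14n+1)$.

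The only genuine obstacle is the geometric content of the first part, namely forcing a quadratic-sized triangle into $\Supp{h}$ regardless of the translate $z$; this exploits the well-known fact that the area of a right-angled triangle in $\Z^2$ grows quadratically with its perimeter. Everything else is algebraic bookkeeping and the straightforward specialisation to $B=\Z^r$.
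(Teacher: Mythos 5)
Your proposal follows the paper's proof essentially verbatim: the same pair $u_n=(f_n,y)$, $v_n=(g_n,y)$, the same coset argument forcing any conjugator's $B$--component into $\langle y\rangle$, the same triangle of $\tfrac12 n(n+1)$ points forced into $\Supp{h}$ (doubled via Lemma \ref{lem:wreath metric}), and the same specialisation to $\Z^r$ via Theorem \ref{thm:wreath conj length}. One small imprecision: for a shift $z=y^k$ the coset $\langle y\rangle x^i$ contributes $|i+k|$ rather than $|i|$ points, so the guaranteed triangle comes only from those $i$ whose sign agrees with that of $k$ --- which is exactly what still yields the $\tfrac12 n(n+1)$ bound you state.
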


\begin{proof}
The lower bounds follow from the argument preceding the Proposition. The upper bound for the second expression is an immediate consequence of Theorem \ref{thm:wreath conj length}, using the facts that $\CLF_{\Z^r}$ is identically zero and the distortion function for cyclic subgroups in $\Z^r$ is the identity function.
\end{proof}

\begin{samepage}The use of area in this way to provide a lower bound on the conjugacy length function raises the question of whether, in general, one can use the Dehn function of a group $B$ to provide a lower bound for $\CLF_{A\wr B}$.

\vspace{1mm}
\begin{question}
Let $\mathrm{Area}_B:\N \to \N$ be the Dehn function of a finitely presented group $B$. Is it true that $\CLF_{A\wr B}(n) \succeq \mathrm{Area}_B(n)$ for any finitely generated group $A$?
\end{question}
\end{samepage}

\section{Free solvable groups}\label{sec:free solvable}

We now turn our attention to study the conjugacy length function of free solvable groups. The method involves using the Magnus embedding to see the free solvable group $S_{r,d+1}$ as a subgroup of the wreath product $\Z^r \wr S_{r,d}$. Results of Section \ref{sec:conjugacy in wreath products} will be central to estimating the length of a conjugating element.

\subsection{The Magnus embedding}

We briefly recap two equivalent definitions of the Magnus embedding, one algebraic, one geometric. For more details see \cite{Sale12magnusqi}.

\subsubsection{Definition via Fox calculus}

In order to define the Magnus embedding we need to first define Fox derivatives. These are derivations on a group ring $\Z(F)$, where $F$ is the free group on generators $X=\{x_1,\ldots , x_r\}$. For each generator we can define a unique derivation $\frac{\partial}{\partial x_i}$ which satisfies
				$$\frac{\partial x_j}{\partial x_i} = \delta_{ij}$$
where $\delta_{ij}$ is the Kronecker delta.

Let $N$ be a normal subgroup of $F$ and denote the quotient homomorphism by $\alpha:F \rightarrow F/N$. The Magnus embedding gives a way of recognising $F/N'$, where $N'$ is the derived subgroup of $N$, as a subgroup of the wreath product $M(F/N)=\Z^r \wr F/N$.

Given a derivation $\mathcal{D}$ on $\Z(F)$, denote by $\mathcal{D}^\star:\Z(F) \rightarrow \Z(F/N)$ the composition of $\mathcal{D}$ with $\alpha$ (extended linearly over $\Z(F)$). Consider the group ring $\Z(F/N)$ and let $\mathcal{R}$ be the free $\Z(F/N)$--module with generators $t_1, \ldots , t_r$. We define a homomorphism
				$$\varphi : F \longrightarrow M(F/N)=\left( \begin{array}{cc}F/N & \mathcal{R} \\ 0 & 1 \end{array}\right)=\left\{ \left(\begin{array}{cc} g & a \\ 0 & 1 \end{array}\right) \mid g \in F/N, a \in \mathcal{R} \right\}$$
by
				$$\varphi(w) = \Magnus{w}.$$
Note that $M(F/N) \cong \Z^r \wr F/N$. Magnus \cite{Magn39} recognised that the kernel of $\varphi$ is equal to $N'$ and hence $\varphi$ induces an injective homomorphism from $F/N'$ to $M(F/N)$ which is known as the \emph{Magnus embedding}. In the rest of this paper we will use $\varphi$ to denote both the homomorphism defined above and the Magnus embedding it induces.

\subsubsection{Geometric definition}\label{sec:geometric definition}

Take a word $w$ in $F$ and construct the path $\rho_w$ read out by this word in the Cayley graph $\Cay(F/N,\overline{X})$ of $F/N$, where $\overline{X}$ is the image of the generating set $X$ in the quotient $F/N$. 

Let $E$ be the edge set of the Cayley graph $\Cay(F/N,\overline{X})$. Define a function $\pi_w : E \rightarrow \Z$ such that for each edge $(g,gx) \in E$ the value of $\pi_w(g,gx)$ is equal to the net number of times the path $\rho_w$ traverses this edge --- for each time the path travels from $g$ to $gx$ count $+1$; for each time the path goes from $gx$ to $g$ count $-1$.

Given $w \in F$ we will use $\pi_w$ to define a function $P_w:F/N\to \Z^r$ in the natural way:
		$$P_w(g)=\big(\pi_w(g,gx_1),\ldots , \pi_w(g,gx_r)\big).$$
Define the \emph{geometric Magnus embedding} to be the function $\maggeo:F \to \Z^r \wr F/N$  such that $\maggeo(w)=(P_w,\alpha(w))$ for $w \in F$.

\begin{thm}[\hspace{-0.2mm}\cite{Sale12magnusqi}]\label{thm:geometric magnus} 
The two definitions of the Magnus embedding, $\varphi$ and $\maggeo$, are equivalent.
\end{thm}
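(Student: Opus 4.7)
The plan is to establish that the two descriptions coincide by exhibiting the natural isomorphism between $M(F/N)$ and $\Z^r \wr F/N$, checking that $\varphi$ and $\maggeo$ agree on the free generators of $F$, and then verifying that both are group homomorphisms. Since $F$ is free, agreement on generators together with the homomorphism property will force equality.

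First I would make the isomorphism $M(F/N)\cong \Z^r \wr F/N$ explicit. An element $\sum_{i=1}^r a_i t_i$ of the free $\Z(F/N)$-module $\mathcal{R}$ may be expanded, writing each $a_i=\sum_{g\in F/N} c_{i,g}\, g$ with $c_{i,g}\in \Z$, and then identified with the finitely-supported function $F/N\to\Z^r$ sending $g\mapsto (c_{1,g},\dots,c_{r,g})$. A short calculation shows that matrix multiplication in $M(F/N)$ turns into wreath-product multiplication, because the action of $F/N$ on $\mathcal{R}$ by left multiplication corresponds exactly to the shift action $g\mapsto g^{\alpha(u)}$ on functions.

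Second I would check that both maps send each generator $x_j$ to the same element. For $\varphi$, the defining relation $\partial x_j/\partial x_i=\delta_{ij}$ gives $\varphi(x_j)=\SolMX{\alpha(x_j)}{t_j}$, which under the identification above is the function supported only at $e_{F/N}$ with value the $j$th standard basis vector of $\Z^r$, paired with $\alpha(x_j)$. For $\maggeo$, the path $\rho_{x_j}$ consists of a single positive traversal of the edge $(e_{F/N},\alpha(x_j))$, so $P_{x_j}$ is supported only at $e_{F/N}$ and takes the same value. Hence $\varphi$ and $\maggeo$ coincide on the generating set $X$.

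Third, since $\varphi$ is a homomorphism by standard Fox calculus, it remains to verify that $\maggeo$ is a homomorphism; this is the only step with any content. The key point is the path-concatenation identity: $\rho_{uv}$ is the concatenation of $\rho_u$ with the left-translate of $\rho_v$ that begins at $\alpha(u)$. Counting edge traversals gives
$$\pi_{uv}(g,gx_i)=\pi_u(g,gx_i)+\pi_{v}\bigl(\alpha(u)^{-1}g,\alpha(u)^{-1}gx_i\bigr)$$
for every $g\in F/N$ and every $i$, which translates directly into $P_{uv}=P_u\cdot P_v^{\alpha(u)}$. Combined with $\alpha(uv)=\alpha(u)\alpha(v)$, this is precisely the wreath-product multiplication rule, so $\maggeo(uv)=\maggeo(u)\maggeo(v)$.

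The main thing to be careful with is that $\pi_w$, and hence $P_w$, is genuinely well defined on elements of $F$ rather than on words: inserting a trivial pair $x_ix_i^{-1}$ adds a $+1$ and a $-1$ traversal of the same edge, which cancel, so the construction descends to $F$. With well-definedness, agreement on generators, and the homomorphism property in hand, the uniqueness of homomorphisms from a free group on given generator images yields $\varphi=\maggeo$.
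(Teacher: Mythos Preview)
The paper does not contain a proof of this theorem: it is stated with a citation to \cite{Sale12magnusqi} and used as a black box, so there is nothing in the present paper to compare your argument against.

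That said, your proposal is correct and is essentially the standard way to establish the equivalence. The identification of $\mathcal{R}$ with finitely supported functions $F/N\to\Z^r$ is the natural one, and your check on generators is accurate given the conventions in the paper (in particular $g^b(x)=g(b^{-1}x)$, so your $P_v^{\alpha(u)}$ matches the paper's shift action). The path-concatenation computation for $\maggeo(uv)$ is exactly the Fox product rule $\frac{\partial(uv)}{\partial x_i}=\frac{\partial u}{\partial x_i}+u\,\frac{\partial v}{\partial x_i}$ seen geometrically, so the two homomorphism properties are really the same fact in two languages. Your remark on well-definedness is a fair point, though one could sidestep it by working with reduced words in $F$; either way nothing goes wrong.
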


\subsubsection{Properties of the Magnus embedding and Fox calculus}

In \cite{Sale12magnusqi} we used Theorem \ref{thm:geometric magnus} to demonstrate that the Magnus embedding is 2-bi-Lipschitz with respect to word metrics determined by canonical generating sets. In particular, let $d_{F/N'}$ denote the word metric in $F/N'$ with respect to the generating set $\alpha(X)$ and let $d_M$ denote the word metric on the wreath product $M(F/N)$ with respect to the generating set  
				$$\left\{ \SolMX{\alpha(x_1)}{0} , \ldots , \SolMX{\alpha(x_r)}{0} , \SolMX{1}{t_1} , \ldots , \SolMX{1}{t_r} \right\}\textrm{.}$$
This generating set coincides with the one on the wreath product described in Section \ref{sec:wreath word length}, with respect to the generating set $\alpha(X)$ on $F/N$ and the standard generating set for $\Z^r$.

\begin{thm}[\hspace{-0.2mm}\cite{Sale12magnusqi}]\label{thm:F/N' undistorted in M(F/N)}
The subgroup $\varphi(F/N')$ is undistorted in $M(F/N)$. To be precise, for each $g \in F/N'$
				$$\frac{1}{2}d_{F/N'}(1,g) \leq d_M(1,\varphi(g))\leq 2d_{F/N'}(1,g)\textrm{.}$$
\end{thm}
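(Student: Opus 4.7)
The plan is to establish the two inequalities separately using the geometric Magnus embedding from Theorem \ref{thm:geometric magnus}, which identifies $\varphi(g)$ with the pair $(P_g, \alpha(g)) \in \Z^r \wr F/N$, where $P_g$ records the net edge traversals of any walk in $\Cay(F/N, \overline{X})$ corresponding to a representative $w$ of $g$.

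For the upper bound $d_M(1, \varphi(g)) \leq 2 d_{F/N'}(1, g)$, I would begin with a geodesic representative $w \in F$ of $g$, so $|w| = d_{F/N'}(1, g)$. The walk $\rho_w$ has length $|w|$, runs from the identity to $\alpha(g)$, and must visit every vertex of $\Supp{P_g}$, since any unvisited vertex has zero net traversal on its incident edges. Hence $K(\Supp{P_g}, \alpha(g)) \leq |w|$, and since net edge traversals are bounded above by total traversals, $|P_g| \leq |w|$. Applying Lemma \ref{lem:wreath metric} yields $d_M(1, \varphi(g)) = K(\Supp{P_g}, \alpha(g)) + |P_g| \leq 2|w|$, as required.

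For the lower bound $d_{F/N'}(1, g) \leq 2 d_M(1, \varphi(g))$, the aim is to construct a walk $\rho$ from the identity to $\alpha(g)$ whose net edge count function is $P_g$ and whose length is at most $2(K + |P_g|)$, where $K = K(\Supp{P_g}, \alpha(g))$; the corresponding word in $F$ will then represent $g$ in $F/N'$ with suitably bounded length. Take $\rho_0$ to be a shortest path from the identity to $\alpha(g)$ through $\Supp{P_g}$, with associated net count function $P_{w_0}$ satisfying $|P_{w_0}| \leq K$. The discrepancy $D := P_g - P_{w_0}$ is then a closed $1$-flow on $\Cay(F/N, \overline{X})$, since both $P_g$ and $P_{w_0}$ arise from walks between the same endpoints, and it satisfies $|D| \leq |P_g| + K$.

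Decomposing $D$ into its connected components, each component admits a closed directed Eulerian tour (by flow conservation) based at any vertex of its support; such a basepoint lies in $\Supp{P_g} \cup \Supp{P_{w_0}}$ and is hence visited by $\rho_0$. Inserting each such tour into $\rho_0$ at the moment it visits the chosen basepoint produces a walk $\rho$ of length at most $K + |D| \leq 2K + |P_g| \leq 2(K + |P_g|) = 2 d_M(1, \varphi(g))$, whose net edge count function equals $P_{w_0} + D = P_g$; its corresponding word $w \in F$ then satisfies $\varphi(w) = \varphi(g)$, giving $d_{F/N'}(1, g) \leq 2 d_M(1, \varphi(g))$. The main technical point will be verifying that the discrepancy $D$ decomposes into components each of which admits a directed Eulerian tour and that these tours can be consistently inserted into $\rho_0$; this relies on flow conservation for $D$ (inherited from its definition as the difference of two walks with common endpoints) and on standard results about Eulerian circuits in directed multigraphs.
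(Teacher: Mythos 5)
Your proof is correct. The paper itself imports this theorem from \cite{Sale12magnusqi} without reproving it, and your route---reading off a geodesic word to bound $K(\Supp{P_g},\alpha(g))$ and $\modulus{P_g}$ for the upper bound, and for the lower bound splicing closed Eulerian realisations of the zero-divergence residual flow $P_g-P_{w_0}$ into a travelling-salesman path $\rho_0$, using that $\ker\maggeo=N'$---is essentially the argument of that source via the geometric Magnus embedding of Theorem \ref{thm:geometric magnus}.
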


We quickly recall the fundamental formula of Fox calculus. Let $\varepsilon:\Z(F) \to \Z$ be the homomorphism sending every element of $F$ to $1$.

\begin{lemma}[Fundamental formula of Fox calculus {\cite[(2.3)]{Fox53}}]\label{lem:fundamental theorem of fox calculus}
Let $a \in \Z(F)$. Then
				$$a-\varepsilon(a)1=\sum_{i=1}^{r} \ddxi{a} (x_i -1)\mathrm{.}$$
\end{lemma}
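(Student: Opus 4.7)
The plan is to prove the identity first for single group elements $w \in F \subset \Z(F)$ by induction on word length in the generators $x_1,\ldots,x_r$, and then extend to arbitrary $a \in \Z(F)$ by $\Z$-linearity, since both sides of the formula are $\Z$-linear in $a$ and $\varepsilon$ together with each Fox derivative $\ddxi{\cdot}$ is $\Z$-linear by construction.

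For the base cases, I would check $a = 1$ (both sides vanish since $\ddxi{1}=0$) and $a = x_j^{\pm 1}$. The case $a = x_j$ is immediate from $\ddxi{x_j} = \delta_{ij}$, giving right-hand side $\sum_i \delta_{ij}(x_i - 1) = x_j - 1$. For $a = x_j^{-1}$, I first derive $\ddxi{x_j^{-1}} = -x_j^{-1}\delta_{ij}$ by applying the Leibniz rule to the identity $1 = x_j \cdot x_j^{-1}$; then the right-hand side collapses to $-x_j^{-1}(x_j - 1) = x_j^{-1} - 1$, matching the left-hand side.

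For the inductive step, assume the formula holds for $u,v \in F$ of smaller word length. Using the Leibniz rule $\frac{\partial(uv)}{\partial x_i} = \ddxi{u} + u\ddxi{v}$ (valid for $u, v \in F$ since $\varepsilon(v) = 1$), a direct computation gives
$$\sum_{i=1}^{r} \frac{\partial(uv)}{\partial x_i}(x_i - 1) \;=\; \sum_{i=1}^{r} \ddxi{u}(x_i - 1) \;+\; u\sum_{i=1}^{r}\ddxi{v}(x_i - 1).$$
By the inductive hypothesis, the two sums on the right equal $u - 1$ and $v - 1$ respectively, so the whole expression becomes $(u - 1) + u(v - 1) = uv - 1 = uv - \varepsilon(uv)\cdot 1$, closing the induction.

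There is no genuine obstacle; the argument is purely formal. The only point requiring care is keeping track of the augmentation when passing from individual group elements to $\Z$-linear combinations, which amounts to noting that the Fox derivatives extended to $\Z(F)$ satisfy $D(ab) = D(a)\varepsilon(b) + aD(b)$, a fact built into their definition as $\varepsilon$-twisted derivations.
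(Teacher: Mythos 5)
The paper does not prove this lemma at all --- it is quoted directly from Fox's original paper \cite[(2.3)]{Fox53} --- so there is no in-paper argument to compare against. Your proof is correct and is the standard one: the base cases, the Leibniz-rule induction on word length for group elements, and the $\Z$-linear extension (using that both sides of the identity are $\Z$-linear in $a$) are all handled properly, so nothing is missing.
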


In Section \ref{sec:Conjugacy in free solvable groups} we will require the following result:

\begin{lemma}[See also Gruenberg {\cite[\S 3.1 Theorem 1]{Grue67}}]\label{lem:kernel of alpha^star}
An element of the kernel of $\bar{\alpha}^\star : \Z(F/N')\rightarrow \Z(F/N)$ can be written in the form
				$$\sum_{j=1}^m r_j(h_j -1)$$
for some integer $m$, where $r_j \in F/N'$ and $h_j \in N/N'$ for each $j = 1 , \ldots , m$.
\end{lemma}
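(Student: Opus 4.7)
The plan is to identify the kernel of $\bar{\alpha}^\star$ with the left ideal of $\Z(F/N')$ generated by $\{h-1 : h \in N/N'\}$, which is the standard description of the relative augmentation ideal associated to a normal subgroup.

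First I would record the easy containment: since $\bar{\alpha}(h) = 1$ for every $h \in N/N'$, any element of the form $r(h-1)$ with $r \in F/N'$ and $h \in N/N'$ lies in $\ker \bar{\alpha}^\star$, and hence so does any sum of such expressions.

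For the reverse containment, fix a set $T \subset F/N'$ of right-coset representatives for $N/N' \leq F/N'$, chosen so that $\bar{\alpha}$ restricts to a bijection from $T$ onto $F/N$. Any $a \in \Z(F/N')$ admits a unique expression
\begin{equation*}
a = \sum_{t \in T,\, h \in N/N'} n_{t,h}\, th, \qquad n_{t,h} \in \Z,
\end{equation*}
with only finitely many nonzero $n_{t,h}$. The key step is the regrouping
\begin{equation*}
a \;=\; \sum_{t \in T} \Big( \sum_{h} n_{t,h} \Big) t \;+\; \sum_{t,h} n_{t,h}\, t(h-1).
\end{equation*}
Applying $\bar{\alpha}^\star$ kills every $t(h-1)$ term, so if $a \in \ker \bar{\alpha}^\star$ we obtain $\sum_{t \in T} (\sum_h n_{t,h})\, \bar{\alpha}(t) = 0$ in $\Z(F/N)$. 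Because the elements $\bar{\alpha}(t)$ for $t \in T$ are distinct, each inner sum $\sum_h n_{t,h}$ vanishes, leaving
\begin{equation*}
a = \sum_{t,h} n_{t,h}\, t(h-1).
\end{equation*}

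The remaining issue, and the only mildly delicate point, is that the lemma requires the coefficients $r_j$ to be honest elements of $F/N'$ rather than signed integer multiples. Positive coefficients are handled by listing the term $t(h-1)$ a total of $n_{t,h}$ times. For negative coefficients I would use the identity
\begin{equation*}
-t(h-1) \;=\; t(1-h) \;=\; (th)\bigl(h^{-1}-1\bigr),
\end{equation*}
which expresses $-t(h-1)$ as $r'(h'-1)$ with $r' = th \in F/N'$ and $h' = h^{-1} \in N/N'$; listing this $|n_{t,h}|$ times covers the case $n_{t,h} < 0$. Concatenating these lists over all $(t,h)$ produces the desired presentation $a = \sum_{j=1}^m r_j(h_j - 1)$ and completes the proof. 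No step presents a serious obstacle; the only care needed is the sign trick in this last paragraph to stay within the form demanded by the statement.
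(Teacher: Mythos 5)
Your argument is correct and is essentially the paper's own: both decompose an element of the kernel along the cosets of $N/N'$ in $F/N'$ and use the vanishing of the coefficient sum over each coset (which is precisely the coefficient of that coset in $\bar{\alpha}^\star(a)$) to rewrite the element as an integer combination of terms $t(h-1)$. Your closing paragraph converting signed integer coefficients into the literal form $\sum_{j} r_j(h_j-1)$ addresses a small point the paper leaves implicit, and your sign trick $-t(h-1)=(th)(h^{-1}-1)$ handles it correctly.
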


\begin{proof}
Take an arbitrary element $a$ in the kernel of $\bar{\alpha}^\star$. Suppose we can write
				$$a=\sum_{g \in F/N'} \beta_g g$$
where $\beta_g \in \Z$ for each $g \in F/N'$. Fix a coset $xN$. Then
				$$\sum_{\bar{\alpha}(g)=xN}\beta_g = 0$$
since this is the coefficient of $xN$ in $\bar{\alpha}^\star(a)$. Notice that $\bar{\alpha}(g)=xN$ if and only if there is some $h \in N$ such that $g = xh$. Thus the sum can be rewritten as
				$$\sum_{h \in N\setminus\{1\}} \beta_{xh} = -\beta_x\textrm{.}$$
This leads us to
				$$\sum_{\bar{\alpha}(g)=xN} \beta_g g = \sum_{h \in N\setminus\{1\}}\beta_{xh}x(h-1)$$
which implies the Lemma after summing over all left-cosets.
\end{proof}

\subsection{Subgroup Distortion}\label{sec:subgroup distortion}

We saw in Theorem \ref{thm:wreath conj length} that in order to understand the conjugacy length function of a wreath product $A \wr B$ we need to understand the distortion function for infinite cyclic subgroups in $B$.

Recall subgroup distortion is studied up to an equivalence relation of functions. For functions $f,g:\N \rightarrow [0,\infty)$ we write $f \preceq g$ if there exists an integer $C>0$ such that $f(n) \leq Cg(Cn)$ for all $n \in \N$. The two functions are equivalent if both $f \preceq g$ and $g \preceq f$. In this case we write $f \asymp g$. 

We will see that all cyclic subgroups of free solvable groups $S_{r,d}$ are undistorted. This is not always the case in finitely generated solvable groups. For example, in the solvable Baumslag-Solitar groups $BS(1,q)=\langle a,b \mid aba^{-1}=b^q \rangle$ the subgroup generated by $b$ is at least exponentially distorted since $b^{q^n}=a^nba^{-n}$. Because this type of construction doesn't work in $\Z \wr \Z$ or free metabelian groups it leads to a question of whether all subgroups of these groups are undistorted (see \cite[\S 2.1]{DO11}). Davis and Olshanskii answered this question in the negative, giving, for any positive integer $t$, $2$--generated subgroups of these groups with distortion function bounded below by a polynomial of degree $t$.

The following Lemma is given in \cite[Lemma 2.3]{DO11}.

\begin{lemma}
Let $A,B$ be finitely generated abelian groups. Then every finitely generated abelian subgroup of $A \wr B$ is undistorted.
\end{lemma}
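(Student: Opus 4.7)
The plan is to exploit the projection $\pi: \Gamma = A \wr B \to B$ and decompose the finitely generated abelian subgroup $H$ (after a harmless passage to a finite-index subgroup) as a direct sum $H = K \oplus L$, where $K = H \cap A^{(B)}$ and $L$ is a complementary subgroup mapping injectively to $\pi(H) \leq B$. Both $\pi(H)$ and $K$ are finitely generated abelian; since subgroups of finitely generated abelian groups are undistorted, $\pi(H)$ is undistorted in $B$, and the finitely many generators of $K$ have supports contained in a fixed finite subset $S \subset B$, so every element of $K$ is supported in $S$.

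First I would establish undistortion of each piece in $\Gamma$ separately. For $k \in K$, writing $k = (f_k, e_B)$ with $\Supp{f_k} \subseteq S$, Lemma \ref{lem:wreath metric} gives $\modulus{k} = K(\Supp{f_k}, e_B) + \modulus{f_k}$, and since the travelling-salesman term over a fixed finite set is bounded by a constant depending only on $S$, this is comparable to $\modulus{f_k}$. The word length of $k$ inside $K$, which is a finitely generated abelian subgroup of the finitely generated abelian group $A^S$, is also comparable to $\modulus{f_k}$, giving $\modulus{k}_K \asymp \modulus{k}$. For $L$, the injectivity of $\pi|_L$ identifies $L$ with $\pi(L)$, a finitely generated abelian subgroup of $B$, hence undistorted in $B$; combined with the trivial bound $\modulus{\pi(l)} \leq \modulus{l}$, this yields $\modulus{l}_L \asymp \modulus{\pi(l)} \asymp \modulus{l}$.

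The decomposition $H = K \oplus L$ with $\pi|_L$ injective exists whenever the quotient $H/K$, which embeds in $B$ via $\pi$, is torsion-free; in general I would pass to a finite-index subgroup of $H$ in which this is the case, invoking the fact that undistortion is invariant under passage between finite-index subgroups. With the decomposition in place, $\modulus{h}_H \asymp \modulus{k}_K + \modulus{l}_L$ for $h = kl \in H$. Since $\pi(h) = \pi(l)$, we have $\modulus{l}_L \asymp \modulus{\pi(l)} \leq \modulus{h}$; the triangle inequality then gives $\modulus{k} \leq \modulus{h} + \modulus{l} \leq C\modulus{h}$ for some constant $C$, so combining yields $\modulus{h}_H \leq C'\modulus{h}$, as required.

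The main obstacle is the splitting step: producing a complementary subgroup $L$ with $\pi|_L$ injective and $H = K \oplus L$. This relies on the structure theorem for finitely generated abelian groups together with a reduction to a finite-index subgroup of $H$ ensuring $H/K$ is torsion-free, so that the short exact sequence $1 \to K \to H \to H/K \to 1$ splits. Once this is handled, the argument is a straightforward assembly from the two separately controlled pieces.
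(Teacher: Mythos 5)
Your argument is correct, but it is not the route the paper takes: the paper does not prove this lemma at all --- it quotes it from Davis--Olshanskii \cite{DO11} and records only that their proof exhibits such subgroups as retracts of a finite-index subgroup of $A\wr B$ (a retract of a finitely generated group via a homomorphic retraction is automatically undistorted, since the retraction is Lipschitz and restricts to the identity). Your proof replaces the construction of a retraction by a direct metric estimate: after passing to a finite-index subgroup of $H$ so that $\pi(H)$ is free abelian and the sequence $0\to K\to H\to\pi(H)\to 0$ splits, you write $H=K\oplus L$ with $K=H\cap A^{(B)}$ supported in a fixed finite set $S$ and $\pi|_L$ injective, control $K$ via Lemma \ref{lem:wreath metric} (the travelling-salesman term over $S$ is bounded) together with undistortion of subgroups of the finitely generated abelian group $A^S$, control $L$ via undistortion of $\pi(L)$ in $B$ and the $1$-Lipschitz projection $\pi$, and assemble with the triangle inequality. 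Each step is sound; the only points worth making explicit are that $K$ is finitely generated because $H$ is Noetherian (this is what produces the fixed finite support set $S$), and the standard fact that undistortion passes between a finitely generated subgroup and its finite-index subgroups. What the retract approach buys is generality and reusability --- the paper notes that essentially the same retraction argument applies to abelian subgroups of free solvable groups, where no analogue of Lemma \ref{lem:wreath metric} is available --- whereas your approach is self-contained, uses only the wreath-product length formula and elementary facts about finitely generated abelian groups, and gives explicit constants in terms of $S$ and the chosen generators.
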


Davis and Olshanskii prove this by showing that such subgroups are retracts of a finite index subgroup of $A \wr B$. A similar process can be applied to finitely generated abelian subgroups of free solvable groups to show that they are undistorted \cite{OlshanskiiPrivate}. Below we give an alternative proof for cyclic subgroups which provides an effective estimate for the constant\footnote{We thank Olshanskii for improving the constant from $2^d$ to $2$.} and which uses only results given in this paper.

\begin{prop}\label{prop:cyclic subgroup distortion in free solvable groups}
Every cyclic subgroup of a free solvable group is undistorted. In particular, suppose $d \geq 1$ and let $x$ be a non-trivial element of $S_{r,d}$. Then
				$$\delta_{\langle x \rangle}^{S_{r,d}}(n) \leq 2n\textrm{.}$$
\end{prop}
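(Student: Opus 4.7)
The plan is to induct on the derived length $d$. The base case $d=1$ is immediate: $S_{r,1}=\Z^r$ is free abelian, so any non-trivial $x$ satisfies $d_{\Z^r}(1,x^k)=|k|\cdot d_{\Z^r}(1,x)\ge |k|$, giving $\delta^{\Z^r}_{\langle x\rangle}(n)\le n$, comfortably within the claimed bound.

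For the inductive step, assume the inequality holds for $S_{r,d}$ and take a non-trivial $x\in S_{r,d+1}$. Let $b$ denote the image of $x$ in $S_{r,d}$ under the natural quotient induced by the inclusion $F^{(d+1)}\subseteq F^{(d)}$. I would split into two cases depending on whether $b$ is trivial. In the case $b\neq 1$, the quotient sends the canonical generators of $S_{r,d+1}$ to those of $S_{r,d}$ and so is $1$-Lipschitz on word lengths; the inductive hypothesis then gives $d_{S_{r,d+1}}(1,x^k)\ge d_{S_{r,d}}(1,b^k)\ge |k|/2$, which suffices.

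In the complementary case $b=1$, the element $x$ lies in the kernel $F^{(d)}/F^{(d+1)}$ of the quotient, so the Magnus embedding $\varphi:S_{r,d+1}\hookrightarrow \Z^r\wr S_{r,d}$ sends $x$ to $(f,e)$ for some $f\in(\Z^r)^{(S_{r,d})}$; injectivity forces $f\neq 0$, and hence $|f|\ge 1$. Because $\Z^r$ is abelian and the $S_{r,d}$-coordinate of $\varphi(x)$ is trivial, powers add coordinatewise: $\varphi(x^k)=(kf,e)$. Lemma \ref{lem:wreath metric} then yields $|\varphi(x^k)|_M\ge |kf|=|k|\cdot|f|\ge |k|$, and combining with the bi-Lipschitz bound of Theorem \ref{thm:F/N' undistorted in M(F/N)} gives $d_{S_{r,d+1}}(1,x^k)\ge \tfrac12 |\varphi(x^k)|_M\ge |k|/2$, closing the induction.

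The main subtlety is keeping the constant from degrading with $d$: the argument succeeds precisely because the Magnus embedding is invoked at most once per element, in the single case where the quotient kills it. Applying $\varphi$ recursively at every level, and thus paying the factor of $2$ from the bi-Lipschitz estimate at each step, would produce only the weaker bound $2^d n$ referred to in the footnote.
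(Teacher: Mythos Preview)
Your proof is correct and follows essentially the same strategy as the paper's: project (via a $1$--Lipschitz quotient) to the level at which the element first becomes trivial, and invoke the Magnus embedding together with Theorem~\ref{thm:F/N' undistorted in M(F/N)} exactly once there. The only difference is organisational---you frame this as an induction on $d$ with a case split in the inductive step, whereas the paper fixes $c$ with $w\in F^{(c)}\setminus F^{(c+1)}$ at the outset and projects directly from $S_{r,d}$ to $S_{r,c+1}$ in one go---but the content is the same, and you have correctly identified why the constant stays at $2$ rather than blowing up to $2^d$.
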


\begin{proof}
Let $w$ be a non-trivial element of the free group $F$. There exists an integer $c$ such that $w \in F^{(c)} \setminus F^{(c+1)}$, where we include the case $F^{(0)}=F$. First we suppose that $d=c+1$.

If $c=0$ then we have $x \in F/F'=\Z^r$ and we apply linear distortion in $\Z^r$. If $c>0$ then we take a Magnus embedding $\varphi : S_{r,d} \hookrightarrow \Z^r \wr S_{r,c}$ and observe that since $w \in F^{(c)}\setminus F^{(d)}$ the image of $x$ in $\varphi$ is $(f,1)$ for some non-trivial function $f:S_{r,c}\rightarrow \Z^r$. If $f^k$ denotes the function such that $f^k(b)=kf(b)$ for $b \in S_{r,c}$, then for any $k \in \Z$, since the Magnus embedding is $2$-bi-Lipschitz (Theorem \ref{thm:F/N' undistorted in M(F/N)}),
				$$ d_{S_{r,d}}(1,x^k)  \geq  \frac{1}{2}d_M(1,(f,1)^k) = \frac{1}{2} d_M(1,(f^k,1))\textrm{.}$$
We can apply Lemma \ref{lem:wreath metric} to get
				$$\frac{1}{2} d_M(1,(f^k,1))=\frac{1}{2}\left( K(\Supp{f},1) + \sum_{b \in S_{r,c}} \modulus{kf(b)} \right)$$
and since the image of $f$ lies in $\Z^r$ and $f$ is non-trivial
				$$\sum_{b \in S_{r,c}}\modulus{kf(b)} =\modulus{k}\sum_{b \in S_{r,c}} \modulus{f(b)}\geq \modulus{k}\textrm{.}$$
Hence
				$$ d_{S_{r,d}}(1,x^k)  \geq \frac{1}{2} d_M(1,(f^k,1)) \geq  \frac{1}{2}\modulus{k}\textrm{.}$$
This implies $\delta_{\langle x \rangle}^{S_{r,d}}(n) \leq 2n$.

Now suppose that $d>c+1$. Then we define a homomorphism 
			$$\psi:S_{r,d}\rightarrow S_{r,c+1}$$
by sending the free generators of $S_{r,d}$ to the corresponding free generator of $S_{r,c+1}$. Then, as before, set $x=wF^{(d)}$ and define $y$ to be the image of $x$ under $\psi$. By the construction of $\psi$ we have that $y=wF^{(c+1)}$. Note that $y$ is non-trivial and $\psi$ does not increase the word length. Hence, using the result above for $S_{r,c+1}$, we observe that
				$$d_{S_{r,d}}(1,x^k) \geq d_{S_{r,c+1}}(1,y^k)\geq\frac{1}{2}\modulus{k}\textrm{.}$$
This suffices to show that the distortion function is always bounded above by $2n$.
\end{proof}

\subsection{Conjugacy in Free Solvable Groups}\label{sec:Conjugacy in free solvable groups}

The fact that the conjugacy problem is solvable in free solvable groups was shown by Kargapolov and Remeslennikov \cite{KR66}. The following Theorem was given by Remeslennikov and Sokolov \cite{RS70}. They use it, alongside Matthews' result for wreath products, to show the decidability of the conjugacy problem in $S_{r,d}$.

\begin{thm}[Remeslennikov--Sokolov \cite{RS70}]\label{thm:remeslennikov and sokolov}
Suppose $F/N$ is torsion-free and let $u,v \in F/N'$. Then $\varphi(u)$ is conjugate to $\varphi(v)$ in $M(F/N)$ if and only if $u$ is conjugate to $v$ in $F/N'$.
\end{thm}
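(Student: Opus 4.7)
The ``if'' direction is immediate since $\varphi$ is a homomorphism. For the converse, I begin with a reduction. Suppose $\varphi(u)=\gamma\varphi(v)\gamma^{-1}$ in $M(F/N)$ for some $\gamma=(h,z)$, and read off the top-left entry of the associated matrix equation to obtain $\alpha(u)=z\alpha(v)z^{-1}$ in $F/N$. Since $\bar{\alpha}$ is surjective I can lift $z$ to some $\tilde{z}\in F/N'$ and replace $u$ by $u':=\tilde{z}^{-1}u\tilde{z}$; then $u'$ is conjugate to $u$ in $F/N'$, and the new conjugator $\varphi(\tilde{z})^{-1}\gamma$ has trivial $F/N$-component. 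Without loss of generality I may therefore assume the conjugator in $M(F/N)$ has the form $(h,1)$ and that $\alpha(u)=\alpha(v)=:c$.

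Writing $a_w:=\sum_{i}\dstardxi{w}t_i$ for the top-right entry of $\varphi(w)$, the equation $(h,1)\varphi(v)(h,1)^{-1}=\varphi(u)$ simplifies to $a_u-a_v=(1-c)h$. Setting $n=uv^{-1}\in N/N'$, the product rule for Fox derivatives gives $a_n=a_u-a_v=(1-c)h$. Applying the fundamental formula of Fox calculus (Lemma \ref{lem:fundamental theorem of fox calculus}) to $n$ and using $\alpha(n)=1$ yields
\[0=\sum_i\dstardxi{n}(\alpha(x_i)-1)=(1-c)\sum_i h_i(\alpha(x_i)-1).\]
Here is where the hypothesis that $F/N$ be torsion-free enters. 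If $c=1$, then $a_n=0$ directly and injectivity of $\varphi$ forces $u=v$. If $c\neq 1$, then $c$ has infinite order, and I would show that $1-c$ is a non-zero-divisor in $\Z(F/N)$ by the elementary observation that $(1-c)a=0$ forces $ca=a$, so the finite support of $a$ is $\langle c\rangle$-invariant and hence empty. Cancelling yields $\sum_i h_i(\alpha(x_i)-1)=0$.

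The main obstacle is the next step: realising the tuple $h=(h_1,\ldots,h_r)$ as the Fox derivative vector $a_w$ of some $w\in N/N'$. Equivalently, I need exactness at the middle term of the sequence
\[0\to N/N'\xrightarrow{\varphi_2}\mathcal{R}\xrightarrow{\partial}I\to 0,\qquad \partial(h_1,\ldots,h_r)=\sum_i h_i(\alpha(x_i)-1),\]
where $I\subset\Z(F/N)$ is the augmentation ideal. This is a classical fact (going back to Magnus and Crowell), and I would deduce it either by appealing to the standard free resolution of $\Z$ over $\Z(F/N)$ coming from the presentation $F/N=\langle X\rangle$, or more concretely via the geometric description of $\varphi$ (Theorem \ref{thm:geometric magnus}): the condition $\sum_i h_i(\alpha(x_i)-1)=0$ is exactly the ``closed $1$-chain'' condition on the induced edge-labelling of $\Cay(F/N,\bar X)$, and any such closed chain is the trace of a word in $N$ whose image in $N/N'$ realises $h$.

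Once such $w\in N/N'$ with $a_w=h$ is in hand, injectivity of $\varphi$ applied to $a_n=(1-c)a_w=a_{(1-c)w}$ gives $n=(1-c)w$ in $N/N'$. A direct commutator computation in $F/N'$, using that $v$ acts on $N/N'$ by the module action of $c=\alpha(v)$, produces the identity $[w,v]=(1-c)w$. Combined with the previous equality this reads $[w,v]=n=uv^{-1}$, which rearranges to $wvw^{-1}=u$, showing that $u$ and $v$ are conjugate in $F/N'$ as required.
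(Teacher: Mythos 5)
Your argument is correct, and its skeleton is the same as the Remeslennikov--Sokolov proof that the paper reproduces (in strengthened form) as Theorem \ref{thm:Magnus conjugators}: reduce to a unipotent conjugator so that $\bar{\alpha}(u)=\bar{\alpha}(v)=c$ and $a_u-a_v=(1-c)h$, use torsion-freeness of $F/N$ to cancel the non-zero-divisor $1-c$, and then realise the resulting data as the Fox-derivative vector of an element of $N/N'$, from which the conjugator $w$ drops out via $[w,v]=(1-c)w$. (You even supply the elementary finite-support argument for why $1-c$ is not a zero divisor, which the paper only asserts.) The one genuine divergence is in the realisation step. You quotient out the hard work by citing exactness of the relation sequence $0\to N/N'\to\mathcal{R}\to I\to 0$ at $\mathcal{R}$, i.e.\ that every tuple $h$ with $\sum_i h_i(\alpha(x_i)-1)=0$ equals $\bigl(\dstardxi{w}\bigr)_i$ for some $w\in N/N'$. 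The paper instead proves exactly this by hand: it lifts the problem into $\Z(F/N')$ by choosing $c'\in\Z(F/N')$ with $\dstardxi{c'}=h_i$ and $\varepsilon(c')=0$, applies the fundamental formula (Lemma \ref{lem:fundamental theorem of fox calculus}) to land in $\ker\bar{\alpha}^\star$, invokes Lemma \ref{lem:kernel of alpha^star} to write $c'=\sum_j r_j(h_j-1)$, and differentiates to produce $y=\prod_j r_jh_jr_j^{-1}\in N$ with the required derivatives. So your appeal to the Gruenberg/Crowell exact sequence is legitimate and classical, but it is precisely the content that the paper's combination of the fundamental formula and Lemma \ref{lem:kernel of alpha^star} establishes from scratch; if you want a self-contained proof you should either write out one of the two derivations you sketch or substitute the paper's explicit construction at that point. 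Everything else (the reduction, the $c=1$ case via injectivity of $\varphi$, and the final identification $n=[w,v]$ giving $u=wvw^{-1}$) checks out.
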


The proof of Theorem \ref{thm:Magnus conjugators} more-or-less follows Remeslennikov and Sokolov's proof of the preceding theorem. With it one can better understand the nature of conjugators and how they relate to the Magnus embedding. In particular it tells us that once we have found a conjugator in the wreath product $\Z^r \wr S_{r,d-1}$ for the image of two elements in $S_{r,d}$, we need only modify the function component of the element to make it lie in the image of the Magnus embedding.

\begin{thm}\label{thm:Magnus conjugators}
Let $u,v$ be two elements in $F/N'$ such that $\varphi(u)$ is conjugate to $\varphi(v)$ in $M(F/N)$. Let $g \in M(F/N)$ be identified with $(f,\gamma) \in \Z^r \wr F/N$. Suppose that $\varphi(u)g = g\varphi(v)$. Then there exists $w \in F/N'$ such that $\varphi(w)=(f_w,\gamma)$ is a conjugator.
\end{thm}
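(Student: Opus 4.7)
The plan is to adjust a lift of $\gamma$ to $F/N'$ by an element of $N/N'$ so that its Magnus image is a conjugator. Since $\bar\alpha:F/N' \twoheadrightarrow F/N$ is surjective, I would choose any $w_0 \in F/N'$ with $\bar\alpha(w_0)=\gamma$; then $\varphi(w_0)=(f_0,\gamma)$, and every $w$ satisfying $\bar\alpha(w)=\gamma$ has the form $w_0 h$ for some $h \in N/N'$. Any such $w$ automatically gives $\varphi(w_0 h) = \varphi(w_0)\varphi(h)$ with $B$--component $\gamma$, so the problem reduces to selecting $h$ so that $\varphi(w_0 h)$ conjugates $\varphi(u)$ to $\varphi(v)$.

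Writing elements in matrix form and setting $T_a := \sum_i \dstardxi{a}\, t_i \in \mathcal{R}$, the equation $\varphi(u)\varphi(w) = \varphi(w)\varphi(v)$ becomes $\alpha(u)\gamma = \gamma\alpha(v)$ together with the identity
\begin{equation*}
(\alpha(u)-1)T_w = \gamma T_v - T_u
\end{equation*}
in $\mathcal{R}$. By hypothesis, the top-right entry $T_g$ of $g$ satisfies the same identity. The product rule for Fox derivatives gives $T_{w_0 h} = T_{w_0} + \gamma T_h$, so the constraint on $h$ reduces to
\begin{equation*}
(\alpha(u)-1)(\gamma T_h + T_{w_0} - T_g) = 0.
\end{equation*}
Using the identity $\gamma^{-1}(\alpha(u)-1) = (\alpha(v)-1)\gamma^{-1}$, this is equivalent to requiring, coordinatewise, that $\dstardxi{h}$ equal $\gamma^{-1}(T_g^{(i)} - \dstardxi{w_0})$ modulo the left-annihilator $\mathrm{Ann}_{\Z(F/N)}(\alpha(v)-1)$, where $T_g^{(i)}$ is the $i$--th coefficient of $T_g$.

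The main obstacle is the realisability step: producing an $h \in N/N'$ whose Fox--derivative tuple matches this prescribed target, after absorbing some tuple in $\mathrm{Ann}(\alpha(v)-1)^r$. The tuples of the form $(\dstardxi{h})_i$ for $h \in N/N'$ are characterised by the single relation $\sum_i \dstardxi{h}(\alpha(x_i)-1) = 0$, obtained by applying Lemma \ref{lem:fundamental theorem of fox calculus} to $h$ and using $\bar\alpha^\star(h) = 1$. A direct computation, invoking the conjugation equation for $g$ together with the fundamental formulas for $u$, $v$ and $w_0$, shows that the element $\sum_i(T_g^{(i)}-\dstardxi{w_0})(\alpha(x_i)-1) - (\gamma - 1)$ lies in $\mathrm{Ann}(\alpha(u)-1)$, so the annihilator slack available in each coordinate is just what is needed to close the Magnus relation. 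Gruenberg's description of $\ker \bar\alpha^\star$ in Lemma \ref{lem:kernel of alpha^star} is the algebraic tool that converts this slack into an actual $h \in N/N'$, closely following the approach of Remeslennikov and Sokolov. Once such $h$ is produced, $w := w_0 h$ has $\bar\alpha(w)=\gamma$ and $\varphi(w)$ is a conjugator, completing the proof.
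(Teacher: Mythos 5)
Your proposal follows essentially the same route as the paper, just reorganised: the paper first conjugates by $\varphi(\gamma_0)$ for a lift $\gamma_0$ of $\gamma$ and then shows the remaining unipotent conjugator equals $\varphi(y)$ for some $y\in N$, whereas you parametrise the lifts of $\gamma$ as $w_0h$ with $h\in N/N'$ and solve for the correction $h$; the ingredients (the two conjugation identities read off the matrix form, the fundamental formula of Fox calculus, and Lemma \ref{lem:kernel of alpha^star} to realise a prescribed tuple of Fox derivatives by an element of $N/N'$) coincide. Two points need repair. The crucial one is that the phrase ``the annihilator slack available in each coordinate is just what is needed'' glosses over exactly the step where the paper invokes the standing hypothesis that $F/N$ is torsion-free. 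The correct statement is
$$(\bar{\alpha}(u)-1)\sum_{i=1}^{r}\bigl(T_g^{(i)}-\dstardxi{w_0}\bigr)(\alpha(x_i)-1)=0,$$
and when $\bar{\alpha}(u)\neq 1$ you need $\bar{\alpha}(u)-1$ to be a non-zero-divisor in $\Z(F/N)$ (which holds because $F/N$ is torsion-free) to conclude that the sum itself vanishes; only then does the target tuple $\gamma^{-1}\bigl(T_g^{(i)}-\dstardxi{w_0}\bigr)$ satisfy the Magnus relation exactly, so that Lemma \ref{lem:kernel of alpha^star} produces the required $h$. (When $\bar{\alpha}(u)=1$ the constraint on $h$ is vacuous and any lift works, matching the paper's Case 1.) Without this non-zero-divisor argument the realisability claim is unjustified. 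The second point is a bookkeeping slip: since $\sum_i\dstardxi{w_0}(\alpha(x_i)-1)=\gamma-1$ by the fundamental formula, the quantity $\sum_i\bigl(T_g^{(i)}-\dstardxi{w_0}\bigr)(\alpha(x_i)-1)$ already equals $\sum_iT_g^{(i)}(\alpha(x_i)-1)-(\gamma-1)$; subtracting a further $(\gamma-1)$, as in the element you display, double-counts it, and the resulting expression is not annihilated by $\bar{\alpha}(u)-1$ in general.
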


\begin{proof}
Let $g \in M(F/N)$ be such that $\varphi(u)g=g\varphi(v)$. Suppose
				$$g = \SolMX{\gamma}{a}=(f,\gamma)$$
for $\gamma \in F/N$ and $a \in \mathcal{R}$. Denote by $\bar{\alpha}:F/N' \rightarrow F/N$ the quotient homomorphism. By direct calculation we obtain the two equations
\begin{eqnarray}
\label{eq:prop:Magnus conjugators:conjugacy equation diagonal}\bar{\alpha}(u)\gamma &=& \gamma \bar{\alpha}(v) \\
\label{eq:prop:Magnus conjugators:conjugacy equation unipotent}\sum_{i=1}^r \dstardxi{u}t_i + \bar{\alpha}(u)a &=& \gamma \sum_{i=1}^r \dstardxi{v}t_i + a
\end{eqnarray}
We now split the proof into two cases, depending on whether or not $\bar{\alpha}(u)$ is the identity element.

\vspace{2mm}
\noindent \underline{\textsc{Case} 1:} $\bar{\alpha}(u)$ is trivial.
\vspace{2mm}

In this case equation (\ref{eq:prop:Magnus conjugators:conjugacy equation unipotent}) reduces to
				$$\sum_{i=1}^r \dstardxi{u}t_i = \gamma \sum_{i=1}^r \dstardxi{v}t_i$$
and it follows that $\varphi(\gamma_0)=(h,\gamma)$ will be a conjugator for $\varphi(u)$ and $\varphi(v)$, where $\gamma_0$ is any lift of $\gamma$ in $F/N'$.

\vspace{2mm}
\noindent \underline{\textsc{Case} 2:} $\bar{\alpha}(u)$ is non-trivial.
\vspace{2mm}

Note that in this case we actually show a stronger result, that any conjugator for $\varphi(u)$ and $\varphi(v)$ must lie in the subgroup $\varphi(F/N')$. This is clearly not necessarily true in the first case.

First conjugate $\varphi(u)$ by $\varphi(\gamma_0)$, where $\gamma_0$ is a lift of $\gamma$ in $F/N'$. This gives us two elements which are conjugate by a unipotent matrix in $M(F/N)$, in particular there exist $b_1, \ldots , b_r$ in $\Z(F/N)$ such that the conjugator is of the form
				$$\varphi(\gamma_0)^{-1}g = \gamma'=\SolMX{1}{b_1t_1 + \ldots + b_rt_r}\textrm{.}$$
Hence the aim now is to show that there is some element $y$ in $N$ such that $\gamma'=\varphi(y)$, in particular $\dstardxi{y}=b_i$ for each $i$. Therefore, without loss of generality, we assume that $\varphi(u)$ and $\varphi(v)$ are conjugate by such a unipotent matrix. 

Assume that $\varphi(u)\gamma'=\gamma'\varphi(v)$. Then equation (\ref{eq:prop:Magnus conjugators:conjugacy equation diagonal}) tells us that $\bar{\alpha}(u)=\bar{\alpha}(v)$. Hence $uv^{-1}=z\in N$. Observe that $\dstardxi{z}=\dstardxi{u}-\dstardxi{v}$, hence from equation (\ref{eq:prop:Magnus conjugators:conjugacy equation unipotent}) we get
				$$(1-\bar{\alpha}(u))b_i = \dstardxi{z}$$
for each $i = 1 , \ldots , r$.

Let $c$ be an element of $\Z(F/N')$ such that $\dstardxi{c}=b_i$ for each $i$. We therefore have the following:
				$$(1-\bar{\alpha}(u))\sum_{i=1}^r \dstardxi{c} (\alpha(x_i)-1)=\sum_{i=1}^r \dstardxi{z}(\alpha(x_i)-1)\textrm{.}$$
We can choose $c$ so that $\varepsilon(c)=0$, and then apply the fundamental formula of Fox calculus, Lemma \ref{lem:fundamental theorem of fox calculus}, to both sides of this equation to get 
				$$(1-\bar{\alpha}(u))c=z-1$$
since $z \in N$ implies $\varepsilon(z)=1$. In $\Z(F/N)$ the right-hand side is $0$. Furthermore, since $F/N$ is torsion-free, $(1-\bar{\alpha}(u))$ is not a zero divisor, so $c$ lies in the kernel of the homomorphism $\bar{\alpha}^\star : \Z(F/N') \rightarrow \Z(F/N)$. Hence there is an expression of $c$ in the following way (see Lemma \ref{lem:kernel of alpha^star}):
				$$c=\sum_{j=1}^m r_j(h_j-1)$$
where $m$ is a positive integer and  for each \mbox{$j = 1, \ldots , m$} we have $r_j\in F/N'$ and $h_j \in N/N'$. Differentiating this expression therefore gives
\begin{eqnarray*}
b_i = \dstardxi{c} &=& \sum_{j=1}^m \left( \dstardxi{r_j} \varepsilon(h_j -1) + \bar{\alpha}(r_j) \dstardxi{(h_j-1)}\right)\\
				&=& \sum_{j=1}^m \bar{\alpha}(r_j)\dstardxi{h_j}.
\end{eqnarray*}
We set 
					$$y = \prod\limits_{j=1}^m r_j h_j r_j^{-1} \in N\textrm{.}$$
Since $h_j \in N/N'$ for each $j$, we have the following equations:
				$$\dstardxi{(h_1 h_2)}=\dstardxi{h_1}+\dstardxi{h_2} , \ \ \ \dstardxi{(r_j h_j r_j^{-1})} = r_j \dstardxi{h_j}\textrm{.}$$
Using these, the condition $\dstardxi{y}=b_i$ can be verified. Hence, $g=\varphi(\gamma_0)\varphi(y)$, so taking $w=\gamma_0 y$ gives us a conjugator $\varphi(\gamma_0 y) = (f_w,\gamma)$ of the required form.
\end{proof}

Theorem \ref{thm:Magnus conjugators} tells us that when considering two conjugate elements in $F/N'$ we may use the wreath product result, Theorem \ref{thm:wreath torsion conj length}, and the fact that the Magnus embedding does not distort word lengths, Theorem \ref{thm:F/N' undistorted in M(F/N)}, to obtain a control on the length of conjugators in $F/N'$ in terms of the conjugacy length function of $F/N$. Recall that in $A\wr B$ the conjugacy length function of $B$ plays a role only when we consider elements conjugate to something of the form $(1,b)$. Hence, with the use of the following Lemma, we can obtain an upper bound on the conjugacy length function of $F/N'$ which is independent of the conjugacy length function of $F/N$ and depends only on the distortion of cyclic subgroups in $F/N$.

\begin{lemma}\label{lem:image not (1,b)}
Let $(f,b)$ be a non-trivial element in the image $\varphi(F/N')$ where $b$ is of infinite order in $F/N$. Then $(f,b)$ is not conjugate to any element of the form $(1,c)$ in $\Z^r \wr F/N$.
\end{lemma}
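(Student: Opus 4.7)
Suppose for contradiction that $(f,b) = \varphi(w)$, with $w \in F/N'$ and $b = \bar{\alpha}(w)$ of infinite order, is conjugate in $\Z^r \wr F/N$ to some $(1,c)$, via a conjugator $\gamma = (h,z)$. My plan is to translate this assumption through the algebraic definition of the Magnus embedding into an identity in $\Z(F/N)$ that collapses to $1 = 0$ after augmenting.

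Expanding $(f,b)(h,z) = (h,z)(1,c)$ (writing $\Z^r$ additively) yields $bz = zc$ together with the coboundary relation $f(g) = h(g) - h(b^{-1}g)$ for every $g \in F/N$. I would then identify each coordinate of $h$ with an element $H_i = \sum_g h_i(g)g \in \Z(F/N)$; under this identification the shift $h \mapsto h^b$ corresponds to left multiplication by $b$, while the $i$-th coordinate of $f$ is $\dstardxi{w}$ by the definition of $\varphi$ via Fox calculus. Thus the coboundary relation translates into
$$\dstardxi{w} = (1-b)H_i \qquad (i = 1, \ldots, r)$$
in $\Z(F/N)$.

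Substituting these identities into the image under $\bar{\alpha}^\star$ of the fundamental formula of Fox calculus (Lemma \ref{lem:fundamental theorem of fox calculus}) applied to a lift of $w$, I obtain
$$b - 1 = \sum_{i=1}^r \dstardxi{w}\bigl(\bar{\alpha}(x_i) - 1\bigr) = (1-b)\sum_{i=1}^r H_i\bigl(\bar{\alpha}(x_i) - 1\bigr),$$
which rearranges to $(b-1)\bigl(1 + \sum_i H_i(\bar{\alpha}(x_i) - 1)\bigr) = 0$ in $\Z(F/N)$.

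To finish, the key observation is that $b-1$ is not a zero divisor in $\Z(F/N)$ whenever $b$ has infinite order: if $(b-1)\xi = 0$ with $\xi = \sum a_g g$, then $b\xi = \xi$ forces $a_g = a_{b^k g}$ for every $k \in \Z$, and the infinite orbit of $\langle b \rangle$ contradicts the finite support of $\xi$ unless $\xi = 0$. Hence $1 + \sum_i H_i(\bar{\alpha}(x_i) - 1) = 0$ in $\Z(F/N)$, and applying the augmentation $\varepsilon: \Z(F/N) \to \Z$ produces $1 = 0$, the required contradiction. I do not anticipate a real obstacle here; the main point worth flagging is that no torsion-freeness hypothesis on $F/N$ as a whole is needed, since the zero-divisor property of $b-1$ is a direct consequence of $b$ having infinite order.
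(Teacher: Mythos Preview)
Your proof is correct and takes a genuinely different route from the paper's own argument.

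The paper argues geometrically: it introduces the ``net flow'' function $\Sigma_f(g)=\sum_i f_i(g)-\sum_i f_i(gx_i^{-1})$, uses the path interpretation of the Magnus embedding to show $\Sigma_f$ vanishes everywhere except at $e$ and $b$, transfers the coboundary relation to $\Sigma_h$, and then analyses cosets of $\langle b\rangle$ to force $\Sigma_h$ to be the characteristic function of $\{e\}$. From this it builds an infinite walk in $\Cay(F/N,X)$ witnessing infinitely many points in $\Supp{h}$. By contrast, you stay entirely inside $\Z(F/N)$: the coboundary relation becomes $\dstardxi{w}=(1-b)H_i$, the fundamental formula packages the ``net flow at $e$ equals $1$'' information as $b-1=\sum_i\dstardxi{w}(\bar{\alpha}(x_i)-1)$, and the contradiction drops out of a single augmentation after cancelling the non-zero-divisor $b-1$.

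Your approach is shorter and uses only tools already set up in the paper (Lemma~\ref{lem:fundamental theorem of fox calculus} and the very same non-zero-divisor observation that appears in the proof of Theorem~\ref{thm:Magnus conjugators}). The paper's argument, on the other hand, is more self-contained on the geometric side and makes the obstruction visible in the Cayley graph; it also exposes exactly where finiteness of $\Supp{h}$ fails. Your remark that only the infinite order of $b$ (not torsion-freeness of $F/N$) is needed for the non-zero-divisor step is correct and worth keeping.
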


\begin{proof}
First note that we may assume $b\neq e$, since otherwise for $(f,e)$ to be conjugate to $(1,c)$, $c$ would have to be $e$ and $f=1$. Suppose, for contradiction, that $(1,c)=(h,z)^{-1}(f,b)(h,z)$. Then
\begin{equation}\label{eq:image not (1,b)}
0 = -h(zg) + f(zg) + h(b^{-1}zg)
\end{equation}
for every $g \in F/N$. We will show that for this to be true, the support of $h$ must be infinite. Write $f$ and $h$ in component form, that is 
		$$f(g)=\big(f_1(g),\ldots , f_r(g)\big)\ \textrm{ and } \ h(g)=\big(h_1(g),\ldots , h_r(g)\big)$$
for $g \in F/N$ and where $f_i,h_i:F/N \to \Z$. Recall that the generators of $F$ are $X=\{x_1,\ldots ,x_r\}$. We will abuse notation, letting this set denote a generating set for $F/N$ as well. Consider the function $\Sigma_f:F/N \to \Z$ defined by
		$$\Sigma_f(g)=\sum_{i=1}^r f_i(g) - \sum_{i=1}^r f_i(gx_i^{-1})$$
for $g \in F/N$ and similarly $\Sigma_h$ for $h$ in place of $f$. Since $(f,b)$ is in the image of the Magnus embedding, using the geometric definition of Section \ref{sec:geometric definition}, it gives a path $\rho$ from the identity to $b$ in the Cayley graph of $F/N$. The geometric definition also tells us that the function $f_i$ counts $+1$ when $\rho$ travels from $g$ to $gx_i$ and $-1$ each time it goes from $gx_i$ to $g$. Therefore $\Sigma_f(g)$ counts the net number of times this path leaves the vertex labelled $g$ and in particular we deduce that
		$$\Sigma_f(e)=1, \ \ \Sigma_f(b)=-1 \ \textrm{ and } \ \Sigma_f(g)=0 \ \textrm{ for } \ g \neq e,b.$$
From equation (\ref{eq:image not (1,b)}) we get $\Sigma_f(g)=\Sigma_h(g)-\Sigma_h(b^{-1}g)$ for every $g \in F/N$. Note that if the support of $h$ is to be finite, the support of $\Sigma_h$ must also be finite.

Consider any coset $\langle b \rangle t$ in $F/N$. Suppose first that $t$ is not a power of $b$. Then $\Sigma_f$ is identically zero on $\langle b \rangle t$. Since $0=\Sigma_f(b^kt)=\Sigma_h(b^kt)-\Sigma_h(b^{k-1}t)$ implies that $\Sigma_h$ is constant on $\langle b \rangle t$, if $h$ is to have finite support then this must always be zero since $b$ is of infinite order. Looking now at $\langle b \rangle$, we similarly get $\Sigma_h(b^k)=\Sigma_h(b^{k-1})$ for all $k \neq 0,1$. Again, the finiteness of the support of $h$ implies that $\Sigma_h(b^k)=0$ for all $k\neq 0$. However, $1=\Sigma_f(e)=\Sigma_h(e)-\Sigma_h(b^{-1})$, so $\Sigma_h(e)=1$. To summarise:
		$$\Sigma_h(g)\neq 0 \ \textrm{ if and only if } \ g=e.$$
		
We can use this to construct an infinite path in $\Cay(F/N,X)$, which contains infinitely many points in $\Supp{h}$ and thus obtain a contradiction. Start by setting $g_0$ to be $e$. Since $\Sigma_h(e)=1$ there is some $x_{i_1} \in X$ such that either $h_{i_1}(e)\neq 0$ or $h_{i_1}(x_{i_1}^{-1})\neq 0$. If the former is true then let $g_1=x_{i_1}$; in the latter case take $g_1=x_{i_1}^{-1}$. Since $\Sigma_h(g_1)=0$ there must be some adjacent vertex $g_2$ such that either $g_2=g_1x_{i_2}$ and $h_{i_2}(g_1)\neq 0$ or $g_2=g_1x_{i_2}^{-1}$ and $h_{i_2}(g_2)\neq 0$. We can extend this construction endlessly, building an infinite sequence $(g_m)$ for $m \in \Z\cup \{0\}$. Furthermore, for each edge in the induced path in $\Cay(F/N,X)$, at least one of its end points will be in the support of $h$. Since $h_i(g_m)$ is finite for every $i$ and every $m$, this path must have infinitely many edges. Hence $\Supp{h}$ must be infinite.
\end{proof}

We now use Theorem \ref{thm:Magnus conjugators}, Lemma \ref{lem:image not (1,b)} and work from Section \ref{sec:conjugacy in wreath products} to give an estimate of the conjugacy length function of a group $F/N'$ with respect to the subgroup distortion of its cyclic subgroups. Recall that \mbox{$\bar{\alpha}:F/N' \rightarrow F/N$} denotes the canonical homomorphism and $\delta_{\langle \bar{\alpha}(u) \rangle}^{F/N}$ is the distortion function for the subgroup of $F/N$ generated by $\bar{\alpha}(u)$.

\begin{thm}\label{thm:F/N' conj length}
Let $N$ be a normal subgroup of $F$ such that $F/N$ is torsion-free. Let $u,v$ be elements in $F/N'$. Then $u,v$ are conjugate in $F/N'$ if and only if there exists a conjugator $\gamma \in F/N'$ such that
	$$d_{F/N'}(1,\gamma) \leq  (16n^2 +8n)(2\delta_{\langle \bar{\alpha}(u) \rangle}^{F/N}(4n)+1).$$
In particular,
	$$\CLF_{F/N'}(n) \leq (16n^2+ 8n)(2\Delta_{\mathrm{cyc}}^{F/N}(4n) +1)$$
where $\Delta_{\mathrm{cyc}}^{F/N}(m)=\sup \left\{ \delta_{\langle x \rangle}^{F/N}(m) \mid x \in F/N\right\}$.
\end{thm}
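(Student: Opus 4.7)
The plan is to transfer the conjugacy problem from $F/N'$ to the wreath product $M=\Z^r \wr F/N$ via the Magnus embedding, apply the short-conjugator results of Section \ref{sec:conjugacy in wreath products} to produce a conjugator in $M$, and then use Theorem \ref{thm:Magnus conjugators} to pull it back to a conjugator that actually lies in $\varphi(F/N')$.

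First I would invoke Theorem \ref{thm:remeslennikov and sokolov} to reduce the question to finding a short conjugator for $\varphi(u)=(f,b)$ and $\varphi(v)=(g,c)$ in $M$, where $b=\bar{\alpha}(u)$ and $c=\bar{\alpha}(v)$. Setting $m = d_M(1,\varphi(u))+d_M(1,\varphi(v))$, Theorem \ref{thm:F/N' undistorted in M(F/N)} gives $m \leq 2n$. The main case is when $b$ is nontrivial; since $F/N$ is torsion-free, $b$ then has infinite order, and Lemma \ref{lem:image not (1,b)} guarantees that $\varphi(u)$ is not conjugate to any element of the form $(1,c')$ in $M$.

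Next I would apply Lemma \ref{lem:wreath conjugator with short B part} to produce a conjugator $(h_0,z_0) \in M$ whose $B$-component satisfies $d_{F/N}(1,z_0) \leq m \leq 2n$. This conjugator need not lie in $\varphi(F/N')$, so I would then invoke Theorem \ref{thm:Magnus conjugators} to obtain $w \in F/N'$ with $\varphi(w)=(f_w,z_0)$, \emph{preserving the $B$-component $z_0$}. The $\Z^r$-component $f_w$ may differ substantially from $h_0$, but this is harmless because Lemma \ref{lem:wreath conj bound depending on B part} bounds the length of \emph{any} wreath-product conjugator entirely in terms of the size of its $B$-component together with the distortion of $\langle b \rangle$ in $F/N$. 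Applied to $\varphi(w)$ with $P = d_{F/N}(1,z_0)+m \leq 2m \leq 4n$, it yields
$$d_M(1,\varphi(w)) \leq (m+1)\, P\, \bigl(2\delta_{\langle b \rangle}^{F/N}(P)+1\bigr) \leq (8n^{2}+4n)\bigl(2\delta_{\langle \bar{\alpha}(u) \rangle}^{F/N}(4n)+1\bigr).$$
A final application of Theorem \ref{thm:F/N' undistorted in M(F/N)}, converting $d_M(1,\varphi(w))$ to $d_{F/N'}(1,w)$ at a cost of a factor of $2$, gives the claimed estimate. The bound involving $\Delta_{\mathrm{cyc}}^{F/N}$ follows immediately by taking the supremum over cyclic subgroups.

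The principal obstacle is the interface between the two wreath-product steps: the short conjugator from Lemma \ref{lem:wreath conjugator with short B part} is not in the Magnus image, whereas the Magnus-image conjugator supplied by Theorem \ref{thm:Magnus conjugators} is not a priori short. The resolution is that these two results dovetail precisely through Lemma \ref{lem:wreath conj bound depending on B part}: its length estimate depends only on the $B$-component and the distortion of $\langle b \rangle$ in $F/N$, and both quantities are preserved when passing from $(h_0,z_0)$ to $\varphi(w)$.
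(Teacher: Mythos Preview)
Your approach is essentially identical to the paper's: produce a wreath-product conjugator with short $B$-component via Lemma \ref{lem:wreath conjugator with short B part}, replace it by a Magnus-image conjugator with the \emph{same} $B$-component via Theorem \ref{thm:Magnus conjugators}, then bound its length using Lemma \ref{lem:wreath conj bound depending on B part} (which depends only on the $B$-component), and finally transfer back to $F/N'$ using the bi-Lipschitz estimate of Theorem \ref{thm:F/N' undistorted in M(F/N)}. Your identification of the ``dovetailing'' of Lemmas \ref{lem:wreath conjugator with short B part} and \ref{lem:wreath conj bound depending on B part} through the preserved $B$-component is exactly the point.

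There is, however, a genuine gap: you treat only the case where $b=\bar{\alpha}(u)$ is nontrivial, calling it the ``main case'', and never return to the case $u\in N$. When $b$ is trivial it has order $1$, so Lemma \ref{lem:wreath conj bound depending on B part} (which assumes $b$ of infinite order) does not apply, and Lemma \ref{lem:image not (1,b)} is unavailable as well. The paper handles this case separately by invoking the finite-order bound, Lemma \ref{lem:wreath torsion bound depending on B part}, with $N=1$ and $\CLF_{\Z^r}\equiv 0$; this yields $d_M(1,\varphi(\gamma_0))\leq P'(2n'+1)$ with $P'=2n'\leq 4n$, hence $d_{F/N'}(1,\gamma_0)\leq 16n^2+4n$, which is absorbed by the stated bound. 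You should add this short paragraph to close the argument.
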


\begin{proof}
We begin by choosing a conjugator $(h,\gamma) \in \Z^r \wr F/N$ for which $\gamma$ is short, as in Lemma \ref{lem:wreath conjugator with short B part}. Then Theorem \ref{thm:Magnus conjugators} tells us that there exists some lift $\gamma_0$ of $\gamma$ in $F/N'$ such that $\varphi(\gamma_0)=(h_0,\gamma)$ is a conjugator. First suppose that $u \notin N$. Then $\overline{\alpha}(u)$ has infinite order in $F/N$ and we may apply Lemma \ref{lem:wreath conj bound depending on B part} to give us
				$$d_M(1,\varphi(\gamma_0)) \leq  (n'+1)P'(2\delta_{\langle \bar{\alpha}(u) \rangle}^{F/N}(P')+1)$$
where $n' = d_M(1,\varphi(u))+d_M(1,\varphi(v))$ and $P'=2n'$ if $\varphi(u)$ is not conjugate to $(1,\bar{\alpha}(u))$ or $P'=n'+\CLF_{F/N}(n')$ otherwise. Lemma \ref{lem:image not (1,b)} tells us that we can discount the latter situation and set $P'=2n'$. By Theorem \ref{thm:F/N' undistorted in M(F/N)} we see that $d_{F/N'}(1,\gamma) \leq 2 d_{M}(1,\gamma_0)$ and $n' \leq 2n$. Then, provided $u \notin N$, the result follows.

If, on the other hand, $u \in N$, then we must apply the torsion version, Lemma \ref{lem:wreath torsion bound depending on B part}. The upper bound we get on $d_M(1,\varphi(\gamma_0))$ this time will be $P'(2n'+1)$, where $p'=2n'$ and $n'\leq 2n$ as in the previous case. This leads to
				$$d_{F/N'}(1,\gamma_0)\leq 16n^2+4n.$$
Clearly this upper bound suffices to give the stated result.
\end{proof}

In the special case where $N'=F^{(d)}$ the quotient $F/N'$ is the free solvable group $S_{r,d}$ of rank $r$ and derived length $d$. Plugging Proposition \ref{prop:cyclic subgroup distortion in free solvable groups} into Theorem \ref{thm:F/N' conj length} gives us an upper bound for the length of short conjugators between two elements in free solvable groups.

\begin{cor}\label{cor:CLF of free solvable}
Let $r,d > 1$. Then the conjugacy length function of $S_{r,d}$ is a cubic polynomial.
\end{cor}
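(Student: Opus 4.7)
The plan is to combine Theorem \ref{thm:F/N' conj length} with the linear bound on distortion of cyclic subgroups given in Proposition \ref{prop:cyclic subgroup distortion in free solvable groups}.

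First I would set $N = F^{(d-1)}$, so that $N' = F^{(d)}$ and hence $F/N' = S_{r,d}$ while $F/N = S_{r,d-1}$. To invoke Theorem \ref{thm:F/N' conj length} I need $F/N = S_{r,d-1}$ to be torsion-free; this is a well known property of free solvable groups (they are residually torsion-free nilpotent, or one can see it directly by induction on derived length via the Magnus embedding), and the base case $S_{r,1} = \mathbb{Z}^r$ is obvious.

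Next I would estimate $\Delta_{\mathrm{cyc}}^{S_{r,d-1}}$. Proposition \ref{prop:cyclic subgroup distortion in free solvable groups} gives $\delta_{\langle x \rangle}^{S_{r,d-1}}(n) \leq 2n$ for every non-trivial $x$, hence $\Delta_{\mathrm{cyc}}^{S_{r,d-1}}(n) \leq 2n$. Substituting this into the bound
$$\CLF_{F/N'}(n) \leq (16n^2 + 8n)(2\Delta_{\mathrm{cyc}}^{F/N}(4n) + 1)$$
from Theorem \ref{thm:F/N' conj length} yields
$$\CLF_{S_{r,d}}(n) \leq (16n^2 + 8n)(16n + 1),$$
which is a cubic polynomial in $n$.

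There is no real obstacle here, since all the work has been done in the preceding results; the only point to check carefully is the applicability of Theorem \ref{thm:F/N' conj length}, namely that $S_{r,d-1}$ is torsion-free, and the correct indexing of derived subgroups so that the Magnus embedding lands $S_{r,d}$ inside $\mathbb{Z}^r \wr S_{r,d-1}$.
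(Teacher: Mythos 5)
Your proposal is correct and is essentially identical to the paper's own proof: both simply substitute the linear distortion bound $\delta_{\langle x\rangle}^{S_{r,d-1}}(n)\leq 2n$ from Proposition \ref{prop:cyclic subgroup distortion in free solvable groups} into Theorem \ref{thm:F/N' conj length} to obtain the bound $(16n^2+8n)(16n+1)$. Your explicit checks of the indexing ($N=F^{(d-1)}$) and of the torsion-freeness of $S_{r,d-1}$ are points the paper leaves implicit, but they are exactly the right things to verify.
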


\begin{proof}
In light of Proposition \ref{prop:cyclic subgroup distortion in free solvable groups}, applying Theorem \ref{thm:F/N' conj length} gives us a conjugator $\gamma \in S_{r,d}$ such that $d_{S_{r,d}}(1,\gamma) \leq (16n^2 +8n)(16n+1).$
\end{proof}

We may ask whether this upper bound is sharp. Indeed, Theorem \ref{thm:Z^r wr Z^r lower bound on clf} tells us it is possible to find elements in $\Z^r \wr \Z^r$ which observe a quadratic conjugacy length relationship. However it seems that this will not necessarily carry through to the free metabelian groups $S_{r,2}$ as the elements $u_n$ and $v_n$ considered in Theorem \ref{thm:Z^r wr Z^r lower bound on clf} cannot be recognised in the image of the Magnus embedding for $S_{r,2}$. Restricting to elements in this image seems to place too many restrictions on the nature of the support of the corresponding functions of the conjugate elements. It therefore seems plausible that the conjugacy length function for $S_{r,2}$ could be subquadratic.

\bibliography{bibliography}{}
\bibliographystyle{alpha}

\end{document}